\documentclass[a4paper, reqno, twoside, 10pt]{amsart}

\usepackage{amsmath}
\usepackage{amssymb}
\usepackage{graphicx}
\usepackage{enumerate}
\usepackage{comment}
\usepackage{hyperref}
\usepackage{xspace}
\usepackage{mathtools}
\usepackage{adjustbox}
\usepackage{enumitem}

\usepackage{tikz}
\usetikzlibrary{arrows,patterns,decorations.pathmorphing,backgrounds,positioning,fit,petri,decorations.pathreplacing}

\tikzset{bullet/.style={
shape = circle,fill = black, inner sep = 0pt, outer sep = 0pt, minimum size = 0.35em, line width = 0pt, draw=black!100}}

\tikzset{circle/.style={
shape = circle,fill = none, inner sep = 0pt, outer sep = 0pt, minimum size = 0.35em, line width = 1pt, draw=black!100}}

\tikzset{rectangle/.style={
shape = rectangle,fill = white, inner sep = 0pt, outer sep = 0pt, minimum size = 0.35em, line width = 0pt, draw=black!100}}

\tikzset{empty/.style={
shape = circle,fill = white, inner sep = 0pt, outer sep = 0pt, minimum size = 0.35em, line width = 0pt, draw=white!100}}

\tikzset{xmark/.style={
shape = x,fill = white, inner sep = 0pt, outer sep = 0pt, minimum size = 0em, line width = 0pt, draw=white!100}}

\tikzset{longrectangle/.style={
inner sep = 1em,
rectangle,
minimum size=1em,
very thick,
draw=black!100,
}}

\tikzset{label distance=-0.15em}
\tikzset{font=\small}

\usepackage{tikz-cd}
\tikzcdset{every label/.append style = {font = \normalsize}}
\usepackage{tkz-graph}

\hypersetup{%
hidelinks,%
pdftitle={Normal complex surface singularities with rational homology disk smoothings},%
pdfauthor={M\'arton Beke, Heesang Park, Dongsoo Shin, Andr\'as I. Stipsicz},%
pdfkeywords={complex surface singularity, Milnor fiber,
rational homology disk, weighted homogeneous singularity},%
}

\newtheorem{theorem}{Theorem}[section]
\newtheorem*{theorem*}{Theorem}
\newtheorem{lemma}[theorem]{Lemma}
\newtheorem*{lemma*}{Lemma}
\newtheorem{proposition}[theorem]{Proposition}
\newtheorem{corollary}[theorem]{Corollary}
\newtheorem*{corollary*}{Corollary}
\newtheorem*{maintheorem*}{Main Theorem}

\theoremstyle{definition}

\newtheorem*{remark*}{Remark}

\numberwithin{equation}{section}

\DeclareMathOperator{\Supp}{Supp}

\DeclareMathOperator{\im}{im}

\DeclareMathOperator{\coker}{coker}

\DeclareMathOperator{\val}{val}

\newcommand{\QHD}{\(\mathbb{Q}\)HD\xspace}
\newcommand{\QHDS}{\(\mathbb{Q}\)HD smoothing\xspace}

\begin{document}

\title[Singularities with QHD smoothings]{Normal complex surface singularities with rational homology disk smoothings}

\author[M. Beke]{M\'arton Beke}

\address{R\'enyi Institute of Mathematics, Budapest and \\ University of Technology and Economics, Budapest, Hungary}
\email{bekem@renyi.hu}

\author[H. Park]{Heesang Park}

\address{Department of Mathematics, Konkuk University, Seoul, Republic of Korea}

\email{HeesangPark@konkuk.ac.kr}

\author[D. Shin]{Dongsoo Shin}

\address{Department of Mathematics, Chungnam National University, Daejeon, Republic of Korea}

\email{dsshin@cnu.ac.kr}

\author[A. I. Stipsicz]{Andr\'{a}s I. Stipsicz}

\address{R\'enyi Institute of Mathematics, Budapest, Hungary}

\email{stipsicz.andras@renyi.hu}


\subjclass[2020]{Primary 14B07; Secondary 14J17, 32S30}

\keywords{Milnor fiber; rational homology disk smoothing; surface singularity;}

\begin{abstract}
We prove that a normal surface singularity whose minimal good resolution graph has at least two nodes does not admit a rational homology disk smoothing. Together with the earlier graph classifications and the analytic classifications of Fowler and Wahl, this gives a complete classification of normal surface singularities admitting rational homology disk smoothings and proves Wahl's conjecture. The same argument also proves Wahl's Rational Conjecture.
\end{abstract}

\maketitle

\section{Introduction}
\label{sec:introduction}

Let \((X,0)\) be a normal complex surface singularity. A smoothing
of \((X,0)\) is a flat deformation \(\pi \colon (\mathcal{X},0) \to (\Delta,0)\) 
over a disc with central fiber \((X,0)\) and smooth nearby fibers. Its
Milnor fiber \(M\) is a compact 4-manifold bounded by the link of
\((X,0)\), and its Milnor number is \(\mu=b_2(M)\). A \emph{rational
homology disk smoothing} (\QHDS for short) is a smoothing for which
\(M\) has the rational homology of a 4-disk. This is equivalent to
\(\mu=0\), since \(M\) has the homotopy type of a two-dimensional CW
complex and \(b_1(M)=0\) by
Greuel--Steenbrink~\cite[Theorem~2]{GreuelSteenbrink1983}.

The classification of normal surface singularities admitting \QHD
smoothings is a longstanding problem, dating back to examples
constructed in the early 1980s. Among cyclic quotient surface
singularities, precisely those of type \(\frac{1}{p^2}(1,pq-1)\), with
\(p>q>0\) relatively prime, admit a \QHDS; see
Looijenga--Wahl~\cite[(5.10)]{LooijengaWahl1986} and
Wahl~\cite[(5.9.1)]{Wahl1981}. 
In the symplectic category, Lisca~\cite{Lisca08} classified the symplectic fillings of lens spaces with their canonical contact structures, showing in particular that they admit symplectic rational homology disk fillings precisely when they are links of cyclic quotient singularities of the above type.
These smoothings occur in the KSBA
compactification of moduli spaces of surfaces of general type. For
example, Y.~Lee and J.~Park~\cite{Lee-Park-K^2=2} constructed such
surfaces by smoothing a surface with cyclic quotient singularities of
the above type; see
also~\cite{Keum-Lee-Park,PPS-K3,PPS-K4,PPS-pg1,PPS-H1Z4,PSU}. These
constructions were motivated by the \emph{rational blow-down}
construction of Fintushel--Stern~\cite{Fintushel-Stern-1997}, and by
its generalization by J. Park~\cite{JPark-1997}. In the rational
blow-down construction, the tubular neighborhood of a configuration of
surfaces in a 4-manifold, whose intersection pattern is given by the
resolution graph, is replaced by the corresponding rational homology
disk Milnor fiber. Rational blow-down has also played a central role
in constructions of exotic smooth structures on many 4-manifolds,
cf. for example~\cite{Fintushel-Stern-1997,JPark-2005,PSSz,SSz}.

A singularity admitting a \QHDS is rational by Durfee's
formula~\cite{Durfee1978}; its resolution graph is therefore a tree of
rational curves.  In addition to the cyclic quotient ones, further
examples were constructed by Wahl~\cite{Wahl1981}; a list of these was
already known, as noted by de Jong--van
Straten~\cite[p.~505]{deJong-VanStraten-1998}.

Using smooth topological methods, Stipsicz--Szab\'o--Wahl~\cite{StipsiczSzaboWahl2008} established strong necessary combinatorial conditions on the resolution graphs of singularities admitting a \QHDS. Bhupal--Stipsicz~\cite[Theorems~1.4 and 1.6]{BhupalStipsicz2011} then classified the nonlinear star-shaped resolution graphs admitting a \QHDS. These are precisely the graphs in Figures~\ref{figure:fig1} and \ref{figure:fig2}; see also Beke~\cite{Bek26}. Wahl~\cite{Wahl2011QHD} conjectured that the singularities admitting such smoothings are exactly the known weighted homogeneous examples.

We prove Wahl's conjecture by excluding resolution graphs with at least two nodes, where a \emph{node} is a vertex of valency at least three.

\begin{theorem}\label{thm:main}
A normal complex surface singularity whose minimal good resolution graph has at least two nodes does not admit a rational homology disk smoothing.
\end{theorem}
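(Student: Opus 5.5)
We argue by contradiction. Let \((X,0)\) admit a \QHDS with Milnor fiber \(M\), and let \(\Gamma\) be its minimal good resolution graph, assumed to have at least two nodes. Rationality (Durfee) makes \(\Gamma\) a tree of rational curves, and the combinatorial restrictions of Stipsicz--Szab\'o--Wahl apply to it.

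The first step is to extract a star-shaped piece from \(\Gamma\). Choose an \emph{extremal} node \(v\): a node such that all but one of its branches are chains (legs). Remove the edge from \(v\) toward the rest of the tree. The component containing \(v\) is a star-shaped graph \(\Gamma_v\), after we replace the cut branch by a chain encoding the same continued-fraction data. Semicontinuity-type arguments then localize the \QHD condition to \(\Gamma_v\). Concretely, we take a one-parameter deformation of \((X,0)\) that smooths out or contracts the rest of the graph; alternatively we use Laufer/Wahl-type splice decomposition of the link. The aim is to show that \(\Gamma_v\), or a closely related graph obtained by modifying the cut leg, must itself lie in the Bhupal--Stipsicz list of Figures~\ref{figure:fig1} and \ref{figure:fig2}. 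In parallel, we use the symplectic/smooth embedding argument: the Milnor fiber \(M\) together with the plumbing \(P_\Gamma\) glues to a closed negative-definite-compatible manifold \(Z=P_\Gamma\cup(-M)\). Capping \(M\) by a suitable compactifying divisor gives a rational surface, and Donaldson's diagonalization applied to \(-P_\Gamma\) yields a lattice embedding \((\mathbb{Z}^{|\Gamma|},Q_\Gamma)\hookrightarrow(\mathbb{Z}^{|\Gamma|},-I)\) of full rank.

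The second step is combinatorial. We analyze full-rank embeddings of the intersection lattice of a tree with two nodes into the standard diagonal lattice. Stipsicz--Szab\'o--Wahl already give the invariant \(\sum_v(-e_v-3)\) bound (it must be negative, of a prescribed size), which forces the graph to be ``almost'' of the form of the known families. Applying the Bhupal--Stipsicz classification to each extremal star-shaped piece, we get that each node-piece is a graph in the \(\mathcal{W},\mathcal{N},\mathcal{M}\) families. We then check, by tracking which basis vectors \(e_i\) of \(-I\) are shared along the chain joining the two nodes, that the two embeddings cannot be glued compatibly with full rank. The expectation is that the connecting chain forces a vector to appear in too many images, which contradicts the count \(\sum(-e_v-3)\).

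The main obstacle is that the lattice embedding argument alone is likely insufficient. Wahl's conjecture is about analytic structure, and purely topological obstructions have apparently not excluded all multi-node graphs, since Bhupal--Stipsicz treated only the star-shaped case. So the decisive step must be an analytic input. I would first try to show that a \QHDS forces the singularity to admit a \(\mathbb{Q}\)-Gorenstein (index-one cover) smoothing structure. The discriminant of the quotient then gives a weighted homogeneous / splice-quotient structure, using Wahl's Rational Conjecture mechanism: the canonical cover of \(M\) gives a smoothing of the index-one cover with Milnor number computable from \(|H_1(\partial M)|\). After that, we would show that a splice-quotient singularity with two nodes has index-one cover Milnor number that is too large, via a Laufer-type formula \(\mu=12p_g+K^2+b_2(\tilde X)-b_1\) applied equivariantly, contradicting \(\mu(M)=0\).
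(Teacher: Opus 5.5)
Your proposal has a genuine gap: none of its three steps actually excludes a graph with two nodes.

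\textbf{Step~1.} There is no mechanism behind it. A \QHDS of $(X,0)$ induces no smoothing, let alone a \QHDS, of a singularity with graph $\Gamma_v$. No semicontinuity statement transfers $\mu=0$ to a star-shaped piece, so the Bhupal--Stipsicz list cannot be applied to the pieces.

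\textbf{Step~2.} This reruns the Donaldson lattice test that produced the classes $\mathcal{A},\mathcal{B},\mathcal{C}$ in the first place, and you add no new lattice obstruction. Moreover, as the paper notes via Beke--Plamenevskaya--Starkston, links excluded from the smoothing classification can still bound Stein \QHD fillings. So arguments that see only the smooth or symplectic topology of $M$ cannot be expected to suffice, as you yourself suspect.

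\textbf{Step~3.} You rightly call this step decisive, but it is a list of intentions rather than an argument. Grant that the smoothing is $\mathbb{Q}$-Gorenstein and that $(X,0)$ is a splice quotient; note that the Rational Conjecture is an inequality between $h^1(\mathcal{S})$ and $h^1(\mathcal{L})$ and supplies no such structure. Even then, you never formulate, let alone prove, an inequality showing that Laufer's $\mu=12p_g+K^2+b_2-b_1$ for the index-one cover exceeds the value forced by the unramified covering of the \QHD Milnor fiber. That would require computing $p_g$ of the cover for every graph of type $\mathcal{A},\mathcal{B},\mathcal{C}$ with two nodes, and nothing in the proposal indicates why the inequality should hold.

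The missing idea is to bound the dimension of the smoothing component rather than to study the topology of $M$. By Wahl, every \QHDS component has dimension $h^1(V,\mathcal{S})+\sum_i(d_i-3)$, which must be positive. The paper shows this is at most $0$ in three pieces:
\begin{enumerate}
\item For a graph of type $\mathcal{A},\mathcal{B},\mathcal{C}$ built using $m$ operations (B-1), bookkeeping along the construction gives $\sum_i(d_i-3)=1-m$.
\item Follow $\mathfrak{q}(D)=\tfrac12(D^2+k(D))$ from the semidefinite initial graph through the construction. Each (B-1) raises its maximum by at most one, while (B-2) and (M) do not raise it. This gives $h^1(V,\mathcal{L})=\mathfrak{q}([\mathcal{L}])\le m$.
\item The new ingredient is $h^1(V,\mathcal{S})\le h^1(V,\mathcal{L})-1$ when there are at least two nodes. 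It combines Wahl's identity $h^1(\mathcal{S})=r-4+h^1(\mathcal{S}(-E'))$, Giraud's identity $h^1(\mathcal{L}(-E'))=h^1(\mathcal{L})-(r-3)$, and the comparison $h^1(\mathcal{S}(-E'))\le h^1(\mathcal{L}(-E'))$.
\end{enumerate}
The comparison in the third item is proved by following Giraud's computation sequence from $E'$ to $[\mathcal{L}]$ and controlling each step with integrable logarithmic connections on $\mathcal{L}(-D)$. Nonzero-residue steps are handled directly, the terminal cycle through Deligne's comparison with $j_!\mathbb{V}$, and the zero-residue block separately.

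Hence the component would have dimension at most $h^1(\mathcal{L})-m\le 0$, a contradiction. The $\mathcal{S}$-versus-$\mathcal{L}$ comparison you associate with the Rational Conjecture is indeed the heart of the matter. It enters, however, as an upper bound on this dimension, and no splice-quotient structure or Milnor-number computation for a cover is needed.
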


Together with the cyclic quotient surface singularity case and the result of Bhupal--Stipsicz~\cite{BhupalStipsicz2011}, Theorem~\ref{thm:main} gives the following classification of resolution graphs.

\begin{corollary}\label{corollary:ClassificationOfGraphs}
The minimal good resolution graph of a normal surface singularity admitting a \QHDS is either the linear graph of a cyclic quotient singularity of type \(\frac{1}{p^2}(1,pq-1)\), with \(p>q>0\) relatively prime, or one of the graphs in Figures~\ref{figure:fig1} and \ref{figure:fig2}.
\end{corollary}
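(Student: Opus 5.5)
We argue by contradiction: suppose \((X,0)\) has a minimal good resolution graph \(\Gamma\) with at least two nodes and admits a \QHDS with Milnor fiber \(M\). By Durfee's formula \((X,0)\) is rational, so \(\Gamma\) is a tree of rational curves. The smooth topological constraints of Stipsicz--Szab\'o--Wahl~\cite{StipsiczSzaboWahl2008} already restrict \(\Gamma\) severely. Gluing \(M\) to the plumbing of \(\Gamma\) with reversed orientation gives a closed negative definite manifold. Donaldson's theorem then yields an embedding of the lattice \(\Lambda_\Gamma\) into a standard diagonal lattice \(\langle -1\rangle^{n}\) of the same rank \(n=|\Gamma|\), with the discriminant a perfect square. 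These constraints leave, beyond the star-shaped families of Bhupal--Stipsicz, only a few explicit families of graphs with two or more nodes, obtained by the known combinatorial moves: blowing up/expanding the triply-branched examples, e.g. the graphs of type \(\mathcal{W}\), \(\mathcal{N}\), \(\mathcal{M}\). The first step is therefore to take this list of surviving two-node (and multi-node) graph families as given and reduce the theorem to excluding each one. Here the tree condition, the rank equality and the count \(\sum (-e_v-3)\) relation from the lattice embedding pin down the shape.

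The second step is analytic, since these graphs cannot be excluded topologically; there are symplectic \QHD fillings for some of them. I would use the structure of \QHD smoothings of rational singularities. The smoothing lies in a component of the deformation space that is, by Wahl's and Koll\'ar--Shepherd-Barron-type arguments, a \(\mathbb{Q}\)-Gorenstein smoothing of an index-one cover. Hence \((X,0)\) is the quotient of a Gorenstein (in fact complete intersection or at least smoothable Gorenstein) singularity \((Y,0)\) by a finite group \(G\) of order \(|H_1(\text{link})|^{1/2}\) acting freely off the origin, with \(\mu(Y)+1=|G|\). Concretely \(H_1(M;\mathbb{Z})\) is finite of order \(\sqrt{\det\Gamma}\), and its universal cover \(\tilde M\) is the Milnor fiber of a smoothing of the universal abelian (or \(G\)-) cover \((Y,0)\) with \(\chi(\tilde M)=|G|\). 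The plan is to compute, for each surviving family, the resolution graph of \((Y,0)\) via the Neumann splice-diagram description of covers. Then show that \((Y,0)\) cannot have a smoothing with Milnor number \(|G|-1\). For this use the Laufer--Steenbrink formula \(\mu = 12p_g + K^2 + b_2(\tilde\Gamma) + b_1(\tilde\Gamma)\) for Gorenstein \(Y\), together with \(p_g(Y)\) computed from the \(G\)-equivariant decomposition: the invariant part gives \(p_g(X)=0\), and the eigenspaces are computed on the resolution of \(X\) via the line bundles attached to characters.

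The key point distinguishing two nodes from one is the following. For star-shaped graphs the cover \(Y\) is a Brieskorn--Hamm complete intersection, and the numbers match. For two nodes the cover is a splice-quotient whose splice diagram has an edge between nodes. I expect the edge determinant condition combined with the \(\mu=|G|-1\) equation to force a contradiction, namely \(K_Y^2+b_2\) being too large by an amount growing with the central edge weight. This eigenspace computation of \(p_g(Y)\), uniformly over the infinite families, is the main obstacle. I would first try to bound \(p_g(Y)\) from below by exhibiting explicit invariant-twisted sections supported near the two nodes, which only need an inequality. If the families cannot be handled uniformly, I would fall back on showing that the central curve configuration between the two nodes yields, after a \(\mathbb{Q}\)-Gorenstein degeneration argument, a cyclic quotient subsingularity not of type \(\frac{1}{p^2}(1,pq-1)\), contradicting Looijenga--Wahl. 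The final step assembles the case analysis; the ``Rational Conjecture'' follows since only the rationality of the ambient data was used.
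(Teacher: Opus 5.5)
Your outer reduction matches the paper's. Once graphs with at least two nodes are excluded, the corollary follows from Looijenga--Wahl and Wahl for chains, a case you omit, and from Bhupal--Stipsicz for star-shaped graphs. Stipsicz--Szab\'o--Wahl confines the multi-node candidates to the classes $\mathcal{A}$, $\mathcal{B}$, $\mathcal{C}$, built from three initial graphs by (B-1), (B-2) and (M). The families $\mathcal{W}$, $\mathcal{N}$, $\mathcal{M}$ you cite are the star-shaped graphs of Figure~\ref{figure:fig1}, not multi-node ones.

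The gap is your second step, which carries the whole content of Theorem~\ref{thm:main}: it never reaches a contradiction. You \emph{expect} $K_Y^2+b_2$ to be too large, but you compute no value of $\mu(Y)$ or $p_g(Y)$ for any family. Nor do you show that your proposed lower bound on $p_g(Y)$ exceeds what $\mu(Y)=|G|-1$ allows.

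The setup is also shaky.
\begin{itemize}
\item Lifting the smoothing to a Gorenstein cover whose Milnor fibre is the corresponding unramified cover of $M$ requires the smoothing to be $\mathbb{Q}$-Gorenstein in the Koll\'ar--Shepherd-Barron sense. You only assert this. In the paper this property is obtained only as a consequence of the main theorem (Corollary~\ref{cor:qhd-properties}).
\item You conflate three covers: the universal cover of $M$, whose group can be non-abelian; the cover with group $H_1(M)$, of order $\sqrt{\det\Gamma}$; and the universal abelian cover of $X$, of order $\det\Gamma$. Only the last has the splice-type description, so the Gorenstein $Y$ you need is a quotient of a splice-type complete intersection, not one itself.
\item Steenbrink's formula has $-b_1$, not $+b_1$.
\end{itemize}

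Your fallback has no mechanism behind it: a $\mathbb{Q}$HD smoothing of $X$ imposes nothing on subchains of $\Gamma$. The analogous claim for arms already fails. The graph with a $-4$ node and three $-3$ leaves (class $\mathcal{A}$ with no blow-ups) admits a $\mathbb{Q}$HD smoothing. Yet each arm contracts to $\frac13(1,1)$, which is not of type $\frac{1}{p^2}(1,pq-1)$.

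What is missing is an obstruction that provably closes on all of $\mathcal{A}$, $\mathcal{B}$, $\mathcal{C}$. The paper takes Wahl's formula $h^1(V,\mathcal{S})+\sum_i(d_i-3)$ for the dimension of any $\mathbb{Q}$HD smoothing component and shows it is $\le 0$, contradicting positivity of dimension. Three inputs give this:
\begin{itemize}
\item $h^1(\mathcal{S})\le h^1(\mathcal{L})-1$. This combines Wahl's restriction identity, Giraud's formula $h^1(\mathcal{L}(-E'))=h^1(\mathcal{L})-(r-3)$, and the comparison $h^1(\mathcal{S}(-E'))\le h^1(\mathcal{L}(-E'))$. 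The comparison is proved along a computation sequence from $E'$ to $[\mathcal{L}]$, with logarithmic connections controlling each step.
\item $h^1(\mathcal{L})=\mathfrak{q}_\Gamma([\mathcal{L}])\le m$, obtained by tracking the quadratic function through (B-1), (B-2) and (M).
\item $\sum_i(d_i-3)=1-m$.
\end{itemize}
Nothing in your proposal plays the role of any of these, so as it stands it does not prove the statement.
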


\begin{figure}[t]
\centering
\includegraphics[width=\textwidth]{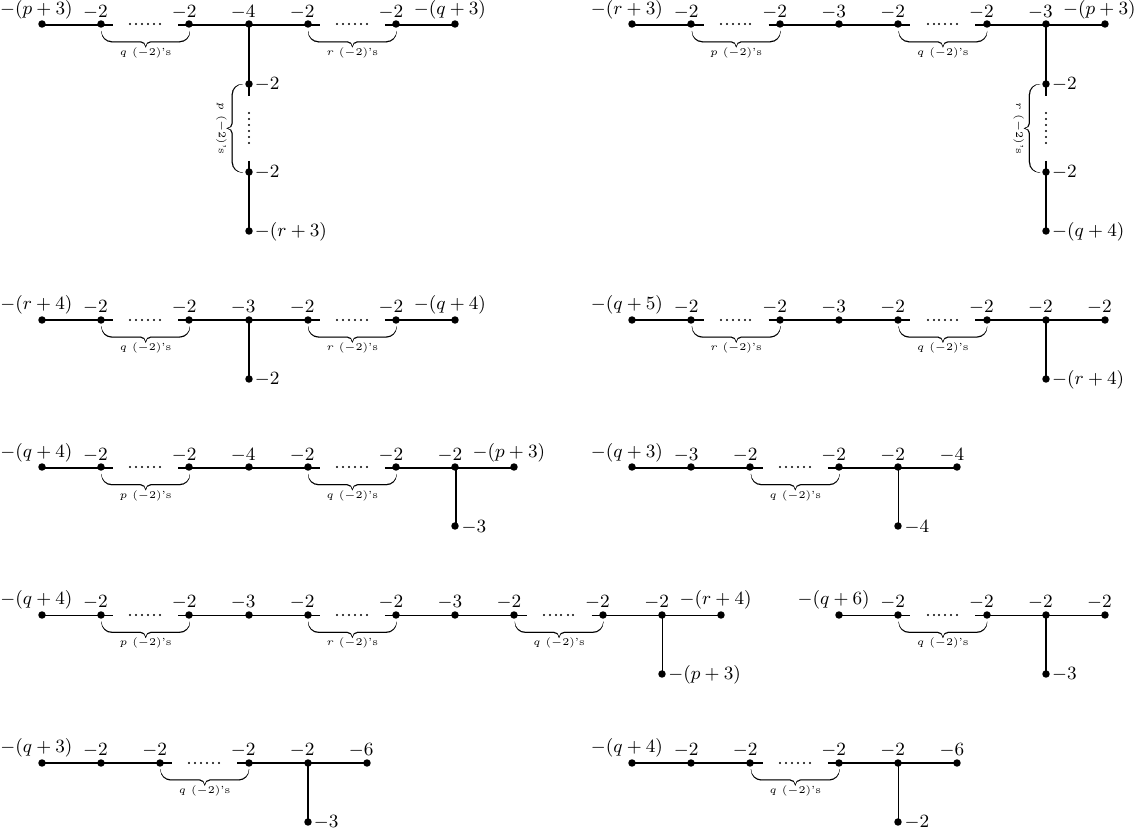}
\caption{Star-shaped graphs with one node of valency three corresponding to singularities with a \QHDS. We assume that \(p,q,r \ge 0\).}
\label{figure:fig1}
\end{figure}

\begin{figure}[t]
\centering
\includegraphics[scale=0.75]{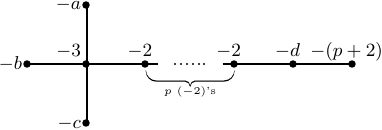}
\caption{Star-shaped graphs with one node of valency four corresponding to singularities with a \QHDS. The quadruple \((a,b,c;d)\) is one of \(\{(3,3,3;4),(2,4,4;3),(2,3,6;2)\}\), and \(p \ge 0\).}
\label{figure:fig2}
\end{figure}

This classification concerns complex smoothings, not symplectic fillings: indeed, some links whose resolution graphs are excluded from the smoothing classification nevertheless admit Stein \QHD fillings, as shown in Beke--Plamenevskaya--Starkston~\cite{BPS}.

The cyclic quotient graphs and the graphs in Figure~\ref{figure:fig1}
are taut, meaning that each determines a unique analytic type. For a
graph in Figure~\ref{figure:fig2}, the analytic type is determined by
the four intersection points on the central curve  with their
corresponding arms. All these analytic types are weighted homogeneous;
see Laufer~\cite{Laufer1973}, Pinkham~\cite{Pinkham1977}, and
Wahl~\cite[Introduction and proof of Corollary~8.2]{Wahl2011QHD}.

To specify the cross ratio for a graph of Figure~\ref{figure:fig2}, let \(E_0\) be the central component of the exceptional divisor. Denote its intersections with the single-vertex arms of weights \(-a,-b,-c\) by \(x_a,x_b,x_c\), respectively, and its intersection with the remaining arm by \(x_d\). Let \(\varphi \colon E_0 \to \mathbb{P}^1\) be the unique isomorphism satisfying \(\varphi(x_a)=0\), \(\varphi(x_b)=1\), and \(\varphi(x_c)=\infty\). Define the \emph{normalized cross ratio} by
\begin{equation}\label{eq:cross-ratio}
 \lambda=\varphi(x_d) \in \mathbb{C} \setminus \{0,1\}.
\end{equation}
When comparing analytic types, only permutations of these points induced by automorphisms of the weighted graph are allowed.

Fowler's classification of model surfaces~\cite[Theorem~3.3.1 and Section~3.5]{Fowler2013}, summarized in Wahl~\cite[Corollary~1.5]{Wahl2021QHD}, determines the cross ratios for which a \QHDS exists. These results, together with Theorem~\ref{thm:main}, give the following complete analytic classification.

\begin{corollary}\label{cor:PreciseMain}
The normal complex surface singularities admitting a \QHDS are precisely the following:

\begin{itemize}
\item the cyclic quotient singularities of type \(\frac{1}{p^2}(1,pq-1)\), with \(p>q>0\) relatively prime;

\item the weighted homogeneous singularities corresponding to the taut graphs of Figure~\ref{figure:fig1}; and

\item the weighted homogeneous singularities with resolution graphs of Figure~\ref{figure:fig2} whose normalized cross ratio is \(\lambda=e^{\pm\pi i/3}\) for \((3,3,3;4)\), \(\lambda=2\) for \((2,4,4;3)\), and \(\lambda=9\) for \((2,3,6;2)\).
\end{itemize}
\end{corollary}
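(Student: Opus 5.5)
The statement to be proved is Corollary~\ref{cor:PreciseMain}. It is assembled from Theorem~\ref{thm:main}, the graph classification, and the analytic classification of Fowler and Wahl. The plan has three steps.

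\textbf{Step 1: reduce to the allowed graphs.} Let \((X,0)\) admit a \QHDS. By Durfee's formula it is rational, so its minimal good resolution graph \(\Gamma\) is a tree of rational curves. Theorem~\ref{thm:main} rules out two or more nodes, so \(\Gamma\) is either linear or star-shaped with a single node.

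In the linear case \((X,0)\) is a cyclic quotient singularity. By Looijenga--Wahl and Wahl it is then of type \(\frac{1}{p^2}(1,pq-1)\) with \(p>q>0\) coprime. In the star-shaped case, Bhupal--Stipsicz~\cite{BhupalStipsicz2011} force \(\Gamma\) to appear in Figure~\ref{figure:fig1} or Figure~\ref{figure:fig2}. This is exactly Corollary~\ref{corollary:ClassificationOfGraphs}.

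\textbf{Step 2: pass from graphs to analytic types.} Cyclic quotient graphs and the graphs in Figure~\ref{figure:fig1} are taut, by Laufer. So in these cases the graph determines the singularity, and it is weighted homogeneous. For a graph in Figure~\ref{figure:fig2}, Pinkham's description of weighted homogeneous singularities and Wahl~\cite{Wahl2011QHD} show two things. First, every singularity with such a graph is weighted homogeneous. Second, it is determined by the four points \(x_a,x_b,x_c,x_d\) on \(E_0\), that is, by the normalized cross ratio \(\lambda\) of \eqref{eq:cross-ratio}, taken up to graph automorphisms.

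\textbf{Step 3: determine which singularities actually smooth.}
\begin{itemize}
\item For cyclic quotients and for Figure~\ref{figure:fig1}, every singularity in the list admits a \QHDS. This is Wahl's construction~\cite{Wahl1981}, as recorded by Bhupal--Stipsicz.
\item For Figure~\ref{figure:fig2}, I would cite Fowler's classification of model surfaces~\cite{Fowler2013}, in the form summarized in Wahl~\cite[Corollary~1.5]{Wahl2021QHD}. It states that a \QHDS exists exactly for the listed cross ratios.
\end{itemize}

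The main bookkeeping obstacle is Step 3 for Figure~\ref{figure:fig2}. One must normalize \(\lambda\) correctly with respect to the graph automorphisms. For \((3,3,3;4)\), the permutations of \(x_a,x_b,x_c\) act on \(\lambda\) by the anharmonic group, and \(\lambda\) and \(\bar\lambda\)-type representatives get identified; this is why both values \(e^{\pm\pi i/3}\) appear. For \((2,4,4;3)\), the swap \(x_b\leftrightarrow x_c\) is allowed, so one must check that the value \(\lambda=2\) matches Fowler's normalization. For \((2,3,6;2)\) there is no nontrivial symmetry permuting the three single-vertex arms, so \(\lambda=9\) is a single value.

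I would verify each case by converting Fowler's explicit model (the position of the fourth point relative to the other three) into \(\varphi\)-coordinates.

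Once this is done, both directions follow. Every singularity in the list admits a \QHDS by Step 3. Conversely, any singularity admitting a \QHDS appears in the list by Steps 1 and 2.
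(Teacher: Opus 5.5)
Your proposal is correct and follows essentially the same route as the paper's proof: Corollary~\ref{corollary:ClassificationOfGraphs} gives the possible graphs, tautness and weighted homogeneity (Laufer, Pinkham, Wahl) pass to analytic types, the existence results recorded by Bhupal--Stipsicz settle the first two classes, and Fowler's classification as summarized by Wahl settles Figure~\ref{figure:fig2}, with the same bookkeeping about graph automorphisms (indeed \(\lambda=2\) is fixed by the swap \(\lambda\mapsto\lambda/(\lambda-1)\), and \(e^{\pm\pi i/3}\) form a single anharmonic orbit). The only point the paper makes explicit that you leave implicit is that Fowler's central curve at infinity is naturally identified with \(E_0\), with its intersection points corresponding to the dual arms; this identification is what allows his cross ratios to be read off in the normalization \eqref{eq:cross-ratio}.
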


The results on smoothing components in Wahl~\cite[Corollary~8.2]{Wahl2011QHD} and \(\mathbb{Q}\)-Gorenstein smoothings in Wahl~\cite[Theorem~3.4]{Wahl-2011-log}, together with Theorem~\ref{thm:main}, also give the following consequence.

\begin{corollary}\label{cor:qhd-properties}
Every rational homology disk smoothing of a normal surface singularity is \(\mathbb{Q}\)-Gorenstein and occurs over a one-dimensional smoothing component.
\end{corollary}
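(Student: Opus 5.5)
Corollary~\ref{cor:qhd-properties} reduces to the known star-shaped and cyclic cases once Theorem~\ref{thm:main} is available. Let \((X,0)\) be a normal surface singularity and \(\pi\colon(\mathcal{X},0)\to(\Delta,0)\) a \QHDS with Milnor fiber \(M\). First I would record that \((X,0)\) is rational by Durfee's formula, so its minimal good resolution graph is a tree of rational curves. By Theorem~\ref{thm:main} this graph has at most one node. By Corollary~\ref{corollary:ClassificationOfGraphs} it is therefore either the linear graph of a cyclic quotient singularity \(\frac{1}{p^2}(1,pq-1)\) or one of the star-shaped graphs in Figures~\ref{figure:fig1} and \ref{figure:fig2}. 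By Corollary~\ref{cor:PreciseMain}, \((X,0)\) is then analytically one of the listed weighted homogeneous singularities, with the prescribed cross ratio in the case of Figure~\ref{figure:fig2}. This places \((X,0)\) squarely in the class Wahl treats.

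Next I would invoke the two cited results of Wahl for that class. By \cite[Corollary~8.2]{Wahl2011QHD}, each singularity in the list has the property that any \QHDS lies over a smoothing component of the versal deformation space, and that component is one-dimensional. By \cite[Theorem~3.4]{Wahl-2011-log}, such a smoothing is \(\mathbb{Q}\)-Gorenstein: some multiple of \(K_{\mathcal{X}}\) is Cartier, equivalently it is the quotient of a Gorenstein (Milnor number \(0\) index-one cover) smoothing. For the cyclic quotient case \(\frac{1}{p^2}(1,pq-1)\), I would check that these statements are also covered. If they are not stated there explicitly, I would cite the classical description: the \QHDS is the \(\mathbb{Z}/p\)-quotient of the \(A_{p-1}\) smoothing, a \(\mathbb{Q}\)-Gorenstein smoothing lying on the one-dimensional Artin component complement (Looijenga--Wahl, Koll\'ar--Shepherd-Barron).

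The main point needing care is the precise hypotheses of Wahl's theorems. They are proved for the weighted homogeneous singularities with these graphs and, in the valency-four case, only for the cross ratios admitting a \QHDS, or else they apply to any \QHDS of them. I would confirm that the implication ``\QHDS exists \(\Rightarrow\) singularity in Wahl's list'' supplied by Theorem~\ref{thm:main} and Corollary~\ref{cor:PreciseMain} exactly matches the hypotheses needed. Once that match is checked, the corollary follows, since every \QHDS is then a smoothing of a singularity in Wahl's list, to which his conclusions apply verbatim.
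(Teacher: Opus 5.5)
Your route is the paper's: Theorem~\ref{thm:main} and Corollary~\ref{cor:PreciseMain} place $(X,0)$ in the weighted homogeneous list. That list includes the cyclic quotients, which the paper does not treat separately, since they are weighted homogeneous too. Then \cite[Corollary~8.2]{Wahl2011QHD} gives the one-dimensional component and \cite[Theorem~3.4]{Wahl-2011-log} gives the $\mathbb{Q}$-Gorenstein property.

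The one step you compress is that Wahl's theorem, as the paper uses it, only says your smoothing is \emph{induced} from a $\mathbb{Q}$-Gorenstein smoothing over the normalization of that component. You still need the short remark that pulling back the equivariant smoothing of the index-one cover preserves the quotient construction, so the induced smoothing is itself $\mathbb{Q}$-Gorenstein.

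Also, your parenthetical is wrong: that index-one cover smoothing has positive Milnor number ($p-1$ for the $A_{p-1}$ cover in the cyclic case), not $0$.
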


The cohomological comparison used to prove Theorem~\ref{thm:main} also proves Wahl's Rational Conjecture~\cite[Section~1]{Wahl2017}. For a minimal good resolution \(f \colon (V,E) \to (X,0)\), put
\[
 \mathcal{S}=\Theta_V(-\log{E}),
 \quad
 \mathcal{L}=\mathcal{O}_V(-(K_V+E)).
\]

\begin{theorem}[Wahl's Rational Conjecture]\label{prop:rational-conjecture}
Let \((V,E) \to (X,0)\) be the minimal good resolution of a rational surface singularity which is not a rational double point. Then
\[
 h^1(V,\mathcal{S}) \le h^1(V,\mathcal{L}),
\]
with equality if and only if \((X,0)\) is quasi-homogeneous.
\end{theorem}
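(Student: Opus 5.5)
Our plan is to compare the two sheaves \(\mathcal{S}=\Theta_V(-\log E)\) and \(\mathcal{L}=\mathcal{O}_V(-(K_V+E))\) through a natural map. On a surface, \(\Theta_V(-\log E)\) is dual to \(\Omega^1_V(\log E)\). We have \(\wedge^2\Omega^1_V(\log E)=\mathcal{O}_V(K_V+E)\), and therefore \(\wedge^2\mathcal{S}\cong\mathcal{L}\). Contracting \(\wedge^2\mathcal{S}\) with a logarithmic vector field is not available in general, so we instead take a logarithmic 1-form approach. A nonzero section \(\omega\in H^0(V\setminus E,\Omega^1)\) with log poles, or more precisely a twisting class, gives a map \(\mathcal{S}\to\mathcal{O}_V\); this is not what we want. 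The cleaner route is to use a section of \(\mathcal{S}\) near \(E\) when one exists.

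In the quasi-homogeneous case, the \(\mathbb{C}^*\)-action yields a global logarithmic vector field \(\eta\in H^0(V,\mathcal{S})\). We take the exact sequence
\[
0\to\mathcal{O}_V\xrightarrow{\ \eta\ }\mathcal{S}\to\mathcal{L}\otimes\mathcal{O}_V(D)\to 0,
\]
where \(D\) is the zero divisor of \(\eta\). For the good resolution of a weighted homogeneous singularity, \(\eta\) vanishes only at isolated points, so \(D=0\) and the cokernel is exactly \(\mathcal{L}\), up to a torsion sheaf supported at the nodes of \(E\). Since \((X,0)\) is rational, \(H^1(V,\mathcal{O}_V)=0\). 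Also \(H^2\) vanishes on \(V\) for coherent sheaves, because the fibres of \(f\) are one-dimensional. The long exact sequence then gives \(h^1(\mathcal{S})\le h^1(\mathcal{L})\), with equality after controlling the torsion term \(H^0\). The point to check is that the zero-dimensional part is absorbed, so one gets a surjection \(H^1(\mathcal{S})\to H^1(\mathcal{L})\) that is also injective.

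In the general case, where no vector field exists, we would compute both sides combinatorially. For rational singularities, Laufer–Wahl type formulas express \(h^1(\mathcal{S})\) as \(h^1\) of \(\Theta_V\) minus local terms. Laufer's formula for rational singularities gives \(h^1(\Theta_V)\) through an inductive computation along a computation sequence. By Riemann–Roch on cycles and rationality, \(h^1(\mathcal{L})\) is determined by the graph: it equals \(h^1\) of \(\mathcal{O}_Z(-K-E)\) for a large cycle \(Z\). We would then filter both sheaves by \(\mathcal{O}_V(-Z_i)\) along a Laufer sequence \(Z_0\le Z_1\le\dots\) and compare the graded pieces curve by curve. On each \(E_i\cong\mathbb{P}^1\) the restriction \(\mathcal{S}|_{E_i}\) is an extension of \(\mathcal{O}(2-v_i)\) by \(\mathcal{O}(-e_i)\). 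Here \(v_i\) is the valency of \(E_i\) and \(e_i=-E_i^2\). The restriction \(\mathcal{L}|_{E_i}\) is \(\mathcal{O}(2-v_i)\). Comparing degrees shows that each step contributes at most as much to \(h^1(\mathcal{S})\) as to \(h^1(\mathcal{L})\).

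The main obstacle is the equality characterization. Equality forces every extension in the filtration to split compatibly, and from this we must produce a global section of \(\mathcal{S}\). Such a section corresponds to a \(\mathbb{C}^*\)-action, and Wahl's criterion then gives quasi-homogeneity. We would exploit the fact that equality means \(H^0(\mathcal{S}|_{Z})\to H^0(\mathcal{L}|_Z)\) is surjective in the appropriate sense. A preimage of the canonical section dual to \(K+E\) along the central curve lifts to \(\eta\). The exclusion of rational double points is needed because there \(K_V+E\) behaves differently: \(E\) is not the log canonical boundary.
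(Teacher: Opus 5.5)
Your argument that equality holds in the quasi-homogeneous case can be repaired. The generator $\eta$ of the lifted $\mathbb{C}^*$-action has no zeros at all as a section of $\mathcal{S}$: the linearized action at the fixed points on $E$ shows that its value in the fibre of $\mathcal{S}$ never vanishes, because $0$ is the only fixed point of $X$. Hence $\mathcal{S}/\mathcal{O}_V\eta\cong\wedge^2\mathcal{S}\cong\mathcal{L}$, and rationality gives $h^1(\mathcal{S})=h^1(\mathcal{L})$. Your wording is off, though: isolated zeros $Z$ would give the cokernel $\mathcal{L}\otimes\mathcal{I}_Z$, not ``$\mathcal{L}$ up to torsion'', and would push $h^1$ the wrong way.

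The genuine gap is the inequality in general. First, $\mathcal{S}|_{E_i}$ is an extension of $\mathcal{O}_{E_i}(2-v_i)$ by $\mathcal{O}_{E_i}$, spanned by the canonical field $u\partial_u$. The bundle $\mathcal{O}(-e_i)$ is the normal bundle, which belongs to $\Theta_V|_{E_i}$. With the correct sequence, $H^1(G,\mathcal{S}(-D)|_G)\to H^1(G,\mathcal{L}(-D)|_G)$ is surjective on every component. So each graded piece of $\mathcal{S}$ contributes at least as much $h^1$ as the corresponding piece of $\mathcal{L}$, and a curve-by-curve degree comparison points the wrong way. Along the sequence \eqref{eq:sequence}, local data only give $h^1(\mathcal{S}(-E'))\le h^1(\mathcal{L}(-E'))+\sum_j h^1(\mathcal{O}_V(-D_j)|_{G_j})+h^1(\mathcal{S}(-D_N))$. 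This surplus is generally positive. It is cancelled only by the cokernels $c_{\mathcal{P}}(D,G)$ of the global restriction maps, which degrees do not see.

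That cancellation is the missing idea, and the paper supplies it in four steps.
\begin{enumerate}
\item It tracks $\Phi(D)=h^1(\mathcal{S}(-D))-h^1(\mathcal{L}(-D))-h^1(\mathcal{O}_V(-D))$, whose increments are $c_{\mathcal{S}}-c_{\mathcal{O}}$ (Lemma~\ref{lem:increment}).
\item At nonzero-residue steps it gets $\coker r_{\mathcal{S}}\twoheadrightarrow\coker r_{\mathcal{O}}$ (Lemma~\ref{lem:nonzero}). This uses an integrable logarithmic connection on $\mathcal{L}(-D)$, whose de Rham differential acts on the $\mathcal{O}$-subbundle along $G$ by the residue.
\item It treats the zero-residue block separately (Proposition~\ref{prop:zero-comparison}).
\item It proves $\Phi([\mathcal{L}])=0$ from Deligne's comparison $j_!\mathbb{V}\simeq\operatorname{DR}$ and $R\Gamma(V,j_!\mathbb{V})=0$ (Lemma~\ref{lem:terminal}).
\end{enumerate}
Combined with Wahl's identity \eqref{eq:Wahl-restriction} and Giraud's \eqref{eq:twisted-L-comparison}, this yields $h^1(\mathcal{S})\le h^1(\mathcal{L})-1$ whenever the graph has at least two nodes.

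Your treatment of the equality case also has no mechanism. Global sections of $\mathcal{S}$ always exist in abundance. What quasi-homogeneity needs is a semisimple field with positive weights, and nothing in a splitting of graded pieces produces one. Lifting along ``the central curve'' also presupposes a star-shaped graph.

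The paper never constructs a $\mathbb{C}^*$-action. A quasi-homogeneous singularity has at most one node by Orlik--Wagreich. Linear and star-shaped graphs, including non-quasi-homogeneous structures on star-shaped graphs, are handled by Wahl's earlier Propositions~4.9 and~4.8. For two or more nodes, the strict inequality above makes both sides of the ``if and only if'' false. Its $-1$ comes from the mismatch between $r-4$ in \eqref{eq:Wahl-restriction} and $r-3$ in \eqref{eq:twisted-L-comparison}.
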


\begin{remark*}
Park--Shin~\cite{ParkShin2026} prove that the Wahl defect \(\Delta(X,0) := h^1(V,\mathcal{O}_V)-h^1(V,\mathcal{S})+h^1(V,\mathcal{L})\) of a non-Gorenstein \(\mathbb{Q}\)-Gorenstein normal surface singularity is nonnegative, with equality precisely in the weighted homogeneous case. As an application, they show that every normal surface singularity admitting a rational homology disk smoothing is weighted homogeneous. Together with the known classification in the weighted homogeneous case, this also gives Corollary~\ref{cor:PreciseMain}.
\end{remark*}

\subsection{Outline of the proof}

The obstruction to a \QHDS comes from the dimension of its smoothing component. Let \(f \colon (V,E) \to (X,0)\) be the minimal good resolution of a rational surface singularity, with resolution graph \(\Gamma\), reduced exceptional divisor \(E=\sum_{i=0}^{n}E_i\), and \(E_i^2=-d_i\). By Wahl~\cite[Theorem~8.1]{Wahl2011QHD}, every \QHDS component has dimension 
\begin{equation}\label{eq:dimension}
 h^1(V,\mathcal{S})+\sum_{i=0}^{n}(d_i-3).
\end{equation}

The graph restrictions of Stipsicz--Szab\'o--Wahl~\cite{StipsiczSzaboWahl2008}
imply that if a singularity admits a \QHD smoothing and its resolution
graph has at least two nodes, then the graph belongs to one of the
classes \(\mathcal{A}\), \(\mathcal{B}\), or \(\mathcal{C}\).
Since every smoothing component has positive dimension, to prove
Theorem~\ref{thm:main} it suffices to establish the inequality
\begin{equation*}
 h^1(V,\mathcal{S})+\sum_{i=0}^{n}(d_i-3) \le 0
\end{equation*}
for every rational singularity whose resolution graph belongs to one
of these classes and has at least two nodes. We prove this inequality
in Theorem~\ref{thm:main-technical}.

We first compare \(h^1(V,\mathcal{S})\) with
\(h^1(V,\mathcal{L})\). Let \(E'\) be the reduced sum of the
exceptional components of valency at least two, and let \(r\) be the
number of leaves of \(\Gamma\). Wahl's restriction calculation
~\cite{Wahl2017} gives the identity
\[
 h^1(V,\mathcal{S})=r-4+h^1(V,\mathcal{S}(-E')).
\]
In Sections~\ref{sec:computation-sequences} and
\ref{sec:cohomological-comparison}, we use Giraud's computation sequence from \(E'\) to \([\mathcal{L}]\) to prove Proposition~\ref{prop:cohomological-comparison}:
\[
 h^1(V,\mathcal{S}(-E')) \le h^1(V,\mathcal{L}(-E')).
\]
Giraud's cohomology formula \cite{Giraud1982} also gives the identity 
\[
 h^1(V,\mathcal{L}(-E'))=h^1(V,\mathcal{L})-(r-3).
\]
Hence \(h^1(V,\mathcal{S})\le h^1(V,\mathcal{L})-1\).

It remains to bound \(h^1(V,\mathcal{L})\) in terms of the resolution graph. The construction of the dual graph \(\Gamma\) in the class \(\mathcal{A}\), \(\mathcal{B}\), or \(\mathcal{C}\) determines a nonnegative integer \(m\), whose precise definition is given in Subsection~\ref{subsec:graph-constructions}. In Proposition~\ref{prop:lattice-estimate}, we prove that
\(
 h^1(V,\mathcal{L}) \le m.
\)
The proof expresses this cohomology dimension as the value of a quadratic function on the exceptional lattice and bounds it by following the graph construction. The same construction gives~\eqref{eq:sum<=1-m}:
\(
 \sum_{i=0}^{n}(d_i-3)=1-m.
\)
Hence
\[
 h^1(V,\mathcal{S})+\sum_{i=0}^{n}(d_i-3) 
 \le h^1(V,\mathcal{L})-1+(1-m)
 =h^1(V,\mathcal{L})-m
 \le 0.
\]
Wahl's Rational Conjecture is proved in Subsection~\ref{subsec:rational-conjecture} as a consequence of the cohomological comparison. Throughout the paper we work over the field of complex numbers.

\subsection{Acknowledgements}

The authors thank J.~Wahl for his careful reading and valuable comments, and for pointing out an error in the first draft of this paper. They also thank A.~Hof for helpful discussions. MB is partially supported by the Doctoral Excellence Fellowship Programme (DCEP) funded by the National Research Development and Innovation Fund of the Ministry of Culture and Innovation and the Budapest University of Technology and Economics, and by the ERC Advanced Grant KnotSurf4d. HP was supported by the Basic Science Research Program through the National Research Foundation of Korea (RS-2026-25590534). DS was supported by the Basic Science Research Program through the National Research Foundation of Korea (2023R1A2C1007293). HP and DS thank the Korea Institute for Advanced Study for its warm hospitality during their appointments as associate members. AS was partially supported by NKKP project 146401 and by the ERC Advanced Grant KnotSurf4d.

The authors used large language models in proving some statements, drafting the paper, and identifying errors. They take full responsibility for the correctness of its contents.

\section{Computation sequences and logarithmic connections}
\label{sec:computation-sequences}

We use logarithmic connections to compare cohomology along
Giraud's computation sequence, treating its endpoints and the
steps with nonzero residue.

Let \(f\colon(V,E) \to (X,0)\) be the minimal good resolution of a
rational surface singularity. For every irreducible component \(E_i\),
write \(E_i^2=-d_i\) and \(t_i=\val(E_i)\). Let \(\Sigma_i\) denote
the reduced divisor of intersection points \(E_i \cap{\overline{(E-E_i)}}\) on
\(E_i\). Recall that \(\mathcal{S}=\Theta_V(-\log{E})\),
\(E'=\sum_{t_i \ge 2}E_i\), and
\(\mathcal{L}=\mathcal{O}_V(-(K_V+E))\). Let \(r\) denote the number
of leaves of the corresponding resolution graph \(\Gamma\). We write
\(i \sim j\) when \(E_i \cap E_j \ne \emptyset\). Throughout this
section, we assume that the resolution dual graph of \(E\) has at
least two nodes. We work over a sufficiently small Stein
representative \(X\) of the singularity.

Wahl's restriction calculation~\cite[Lemma~2.5 and Propositions~2.6(2), 2.9(1)]{Wahl2017} gives
\begin{equation}\label{eq:Wahl-restriction}
 h^1(V,\mathcal{S})=r-4+h^1(V,\mathcal{S}(-E')).
\end{equation}

\subsection{Giraud divisors and computation sequences}

For a line bundle \(\mathcal{P}\) on \(V\), let \([\mathcal{P}]\) denote the divisor defined by Giraud~\cite{Giraud1982} (see also Wahl~\cite[\S4]{Wahl2015}). It is the unique integral divisor supported on \(E\) with the following properties:

\begin{enumerate}
\item \([\mathcal{P}]E_i \le c_1(\mathcal{P})E_i\) for every \(i\);

\item if an integral divisor \(D\) supported on \(E\) satisfies \(DE_i \le c_1(\mathcal{P})E_i\) for every \(i\), then \(D-[\mathcal{P}]\) is effective.
\end{enumerate}

For every integral exceptional divisor \(D\), the definition gives
\begin{equation}\label{eq:translation}
 [\mathcal{P}(-D)]=[\mathcal{P}]-D.
\end{equation}
Since \((X,0)\) is rational, Wahl~\cite[Proposition~4.4]{Wahl2015} gives
\begin{equation}\label{eq:cohomology-formula}
 h^1(V,\mathcal{P}) =\frac{[\mathcal{P}]([\mathcal{P}]+K_V)}2-[\mathcal{P}]c_1(\mathcal{P}),
\end{equation}
whenever \([\mathcal{P}]\) is effective.

For an integral exceptional divisor \(D\) on $V$, put
\begin{equation*}\label{eq:riemann-roch}
  q_{\mathcal {L}}(D) :=\frac{D(D+K_V)}2-Dc_1({\mathcal{L}}).
\end{equation*}
Since a tree with at least two nodes is neither a chain nor a star-shaped tree, Wahl's estimate \cite[Theorem~4.1 and the observation following it]{Wahl2017} gives
\(
 [\mathcal{L}] \ge E'.
\)
The reduced divisor \(E'\) is connected, its irreducible components are rational, and its dual graph is a tree. Hence
$
 E'(E'+K_V)=-2$, $-E'c_1(\mathcal{L})=E'(K_V+E)=r-2,
$
and therefore
$
 q_{\mathcal{L}}(E')=r-3.
$
Since \([\mathcal{L}]\ge E'\), \eqref{eq:cohomology-formula} applies to both \(\mathcal{L}\) and \(\mathcal{L}(-E')\). Using \eqref{eq:translation}, we obtain 
$
h^1(V,\mathcal{L})=q_{\mathcal{L}}([\mathcal{L}])$, 
$h^1(V,\mathcal{L}(-E'))=q_{\mathcal{L}}([\mathcal{L}])-q_{\mathcal{L}}(E').
$
In particular,
\begin{equation}\label{eq:twisted-L-comparison}
 h^1(V,\mathcal{L}(-E'))=h^1(V,\mathcal{L})-(r-3).
\end{equation}

For an exceptional rational cycle \(A\) and an irreducible component \(G\) of \(E\), write \(A_G\) for the coefficient of \(G\) in \(A\). We use Giraud's algorithmic description of \([\mathcal{P}]\) \cite[\S1.4]{Giraud1982} (cf. Némethi~\cite[Proposition~6.6.3]{Nemethi2022}). If \([\mathcal{P}]\) is effective and \(0 \le D<[\mathcal{P}]\) is integral, then there is a component \(G\) of \(E\) such that
$
DG>c_1(\mathcal{P})G$, $D+G \le [\mathcal{P}].
$
Iterating this step reaches \([\mathcal P]\) after finitely many steps.

Since \(E' \le [\mathcal{L}]\), applying this construction to \(\mathcal{P}=\mathcal{L}\) gives a computation sequence
\begin{equation}
\label{eq:sequence}
E'=D_0<D_1<\cdots<D_N=[\mathcal{L}],\quad
D_{j+1}=D_j+G_j\quad(0 \le j<N).
\end{equation}
Here \(G_j\) is the component added at the \(j\)-th step; the same component may occur at more than one step. The sequence satisfies
\[
 (D_j)_{G_j}<([\mathcal{L}])_{G_j},\quad
 D_jG_j>c_1(\mathcal{L})G_j=2-\val(G_j).
\]
For \(E' \le D \le [\mathcal{L}]\), put
\[
 \Phi(D) :=h^1(\mathcal{S}(-D))-h^1(\mathcal{L}(-D))-h^1(\mathcal{O}_V(-D)).
\]
For a locally free sheaf \(\mathcal{P}\) and a component \(G\) of \(E\), define
\[
 c_{\mathcal{P}}(D,G) := \dim\coker[H^0(\mathcal{P}(-D)) \to H^0(\mathcal{P}(-D)|_{G})].
\]

\begin{lemma}
\label{lem:increment}
At a step \(D \mapsto D+G\) of \eqref{eq:sequence},
\[
 \Phi(D+G)-\Phi(D)=c_{\mathcal{S}}(D,G)-c_{\mathcal{O}}(D,G).
\]
\end{lemma}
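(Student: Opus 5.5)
For each of the three locally free sheaves \(\mathcal{P}\in\{\mathcal{S},\mathcal{L},\mathcal{O}_V\}\) we use the restriction sequence
\[
0\to \mathcal{P}(-D-G)\to \mathcal{P}(-D)\to \mathcal{P}(-D)|_G\to 0 .
\]
Taking cohomology on the Stein representative, and using that \(G\cong\mathbb{P}^1\) is one-dimensional (so \(H^2\) of the restriction vanishes) and that \(H^2\) of coherent sheaves on \(V\) vanishes (the fibres of \(f\) have dimension at most one), we get the exact sequence
\[
H^0(\mathcal{P}(-D))\to H^0(\mathcal{P}(-D)|_G)\to H^1(\mathcal{P}(-D-G))\to H^1(\mathcal{P}(-D))\to H^1(\mathcal{P}(-D)|_G)\to 0 .
\]
All terms are finite-dimensional, since \(H^1\) of a coherent sheaf on the resolution of a singularity is finite-dimensional. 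Counting dimensions gives the basic identity
\[
h^1(\mathcal{P}(-D-G))-h^1(\mathcal{P}(-D))=c_{\mathcal{P}}(D,G)-h^1(G,\mathcal{P}(-D)|_G).
\]
Taking the alternating combination defining \(\Phi\), the increment \(\Phi(D+G)-\Phi(D)\) becomes
\[
\bigl(c_{\mathcal S}-c_{\mathcal L}-c_{\mathcal O}\bigr)(D,G)\;-\;\bigl(h^1(G,\mathcal S(-D)|_G)-h^1(G,\mathcal L(-D)|_G)-h^1(G,\mathcal O(-D)|_G)\bigr).
\]
It therefore suffices to prove three vanishing statements at each step: \(c_{\mathcal L}(D,G)=0\), and \(h^1(G,\mathcal{S}(-D)|_G)=h^1(G,\mathcal{L}(-D)|_G)+h^1(G,\mathcal{O}(-D)|_G)\), with the latter holding in the strongest form that all three vanish, or at least the \(\mathcal L\) and \(\mathcal O\) terms are absorbed by the \(\mathcal S\) term.

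The restrictions are explicit bundles on \(G\cong\mathbb{P}^1\). Write \(t=\val(G)\) and \(a=DG\); the step condition gives \(a\ge 3-t\). Then \(\mathcal{L}(-D)|_G\) has degree \(c_1(\mathcal L)G-DG=2-t-a\le -1\), so \(h^0=0\) and hence \(c_{\mathcal L}=0\). Its \(h^1\) equals \(t+a-3\). On the other hand, \(\mathcal O(-D)|_G\) has degree \(-a\), with \(h^1=\max(a-1,0)\). For \(\mathcal S\) we use the standard exact sequence
\[
0\to\Theta_G(-\Sigma_G)\to \mathcal{S}|_G\to \mathcal O_G(G)\to 0,
\]
whose outer terms have degrees \(2-t\) and \(-d_G\). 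Twisting by \(-D\) shows that \(\mathcal S(-D)|_G\) is an extension of \(\mathcal O(-d_G-a)\) by \(\mathcal O(2-t-a)\). We claim the extension splits (it is a logarithmic-connection/residue statement: the residue of the log structure gives the splitting, which is presumably what the section's introduction refers to). Granting this, the degrees give
\[
h^1(\mathcal S(-D)|_G)=(t+a-3)_+ + (d_G+a-1)_+ .
\]
Note that \(t+a-3\ge 0\) always, matching \(h^1(\mathcal L(-D)|_G)\). Since the degree of \(\mathcal{S}(-D)|_G\) is \(2-t-d_G-2a\), Riemann--Roch applied to \(h^0\) terms may allow avoiding the splitting question altogether, provided we compare Euler characteristics instead.

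This suggests the cleaner route. By Riemann--Roch on \(G\),
\[
\chi(\mathcal S(-D)|_G)-\chi(\mathcal L(-D)|_G)-\chi(\mathcal O(-D)|_G)=(4-t-d_G-2a)-(3-t-a)-(1-a)=-d_G .
\]
This is not zero, so the claimed identity must genuinely use \(c\)-terms with \(h^0\) included. We therefore rewrite the basic identity as
\[
\Delta h^1=c_{\mathcal P}-h^1(\mathcal P(-D)|_G)
\]
and expect the paper's definition to absorb the \(h^0\) contributions through \(c_{\mathcal{P}}\). The key remaining step is to show
\[
h^1(G,\mathcal S(-D)|_G)=h^1(G,\mathcal L(-D)|_G)+h^1(G,\mathcal O(-D)|_G).
\]
Comparing the \(h^1\) of the sub and quotient of \(\mathcal S(-D)|_G\) with the \(\mathcal L\) and \(\mathcal O\) terms shows that the \(\mathcal L\) term matches the sub exactly. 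The quotient term \(\mathcal O(-d_G-a)\), however, has \(h^1=d_G+a-1\), not \(a-1\). The main obstacle is exactly this mismatch. I would try to resolve it by using \(\mathcal O_G(G)\cong \mathcal O(-d_G)\) twisted correctly. The quotient of \(\mathcal S|_G\) is the normal bundle \(N_G=\mathcal O_G(G)\), and \(\mathcal O(-D)|_G\otimes N_G\) corresponds to \(\mathcal O(-(D-G))|_G\) in the \(\mathcal{O}\)-sequence. So I would recheck whether the step's \(\mathcal O\) term in the paper's convention effectively carries the normal twist, and handle the \(H^1\to H^1\) surjections via the long exact sequence of the extension, where the connecting map is governed by the residue (logarithmic connection). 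This is the step I expect to be delicate.
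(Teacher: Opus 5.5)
\textbf{There is a genuine gap in your proposal.} Your framework matches the paper's proof: the three restriction sequences, the vanishing of \(H^2\) on \(V\), the identity
\[
h^1(\mathcal{P}(-D-G))-h^1(\mathcal{P}(-D))=c_{\mathcal{P}}(D,G)-h^1(G,\mathcal{P}(-D)|_G),
\]
and \(c_{\mathcal{L}}(D,G)=0\) because \(\deg\mathcal{L}(-D)|_G<0\). But you never prove the decisive step
\[
h^1(G,\mathcal{S}(-D)|_G)=h^1(G,\mathcal{O}_V(-D)|_G)+h^1(G,\mathcal{L}(-D)|_G),
\]
and the route you attempt cannot work, because your extension for \(\mathcal{S}|_G\) is wrong.

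A logarithmic vector field is tangent to \(G\), so the composite \(\mathcal{S}\to\Theta_V|_G\to N_{G/V}\) is zero. Hence there is no surjection \(\mathcal{S}|_G\to\mathcal{O}_G(G)\). The sequence \(0\to\Theta_G\to\Theta_V|_G\to N_{G/V}\to 0\) belongs to \(\Theta_V\), not to \(\Theta_V(-\log E)\). Degrees confirm this: \(c_1(\mathcal{S})G=-(K_V+E)G=2-t\), whereas your extension has degree \(2-t-d_G\). That spurious \(-d_G\) is exactly the Euler-characteristic defect and the \(h^1\) mismatch (\(d_G+a-1\) versus \(a-1\)) you ran into. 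It is an artifact of the wrong sequence, not a feature of the statement, so rechecking conventions or invoking a residue-driven connecting map will not repair it.

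\textbf{The fix, as in the paper,} is the Euler-field (residue) sequence
\[
0\to\mathcal{O}_G\to\mathcal{S}|_G\to\Theta_G(-\log\Sigma_G)\to 0 .
\]
Here the subbundle is spanned by the restriction of \(u\partial_u\) for a local equation \(u\) of \(G\). Adjunction gives \(\Theta_G(-\log\Sigma_G)\simeq\mathcal{O}_G(-K_G-\Sigma_G)\simeq\mathcal{L}|_G\). Twisting by \(\mathcal{O}_V(-D)|_G\) yields
\[
0\to\mathcal{O}_V(-D)|_G\to\mathcal{S}(-D)|_G\to\mathcal{L}(-D)|_G\to 0 .
\]
At a computation step the quotient has negative degree, so its \(H^0\) vanishes. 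The long exact sequence on the curve \(G\) then gives the additivity of \(h^1\) directly; no splitting or residue argument is needed. Your alternating sum then collapses to \(c_{\mathcal{S}}(D,G)-c_{\mathcal{O}}(D,G)\), and with the correct degree \(2-t-2a\) the Euler characteristics also balance.

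\textbf{A minor point:} \(H^0(V,\mathcal{P}(-D))\) is infinite-dimensional on the Stein representative. Only the terms from \(H^0(G,\cdot)\) onward are finite-dimensional, which is all your dimension count needs.
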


\begin{proof}
Restriction of logarithmic vector fields to \(G\) gives a natural exact sequence
\[
 0 \to \mathcal{O}_{G} \to \mathcal{S}|_{G} \to \Theta_{G}(-\log\Sigma_G) \to 0,
\]
where \(\Sigma_G\) is the reduced divisor of intersection points \(G \cap(E-G)\). By adjunction,
\[
 \Theta_{G}(-\log\Sigma_G) \simeq \mathcal{O}_{G}(-K_G-\Sigma_G) \simeq \mathcal{L}|_{G}.
\]
Tensoring with \(\mathcal{O}_V(-D)|_G\) gives
\[
 0 \to \mathcal{O}_V(-D)|_{G} \to \mathcal{S}(-D)|_{G} \to \mathcal{L}(-D)|_{G} \to 0.
\]
At a computation step, 
\[\deg(\mathcal{L}(-D)|_G)=c_1(\mathcal{L})G-DG<0,\]
so \(H^0(G,\mathcal{L}(-D)|_G)=0\) and
\begin{equation}\label{eq:cL-zero}
 c_{\mathcal{L}}(D,G)=0.
\end{equation}
The cohomology sequence of the tensored exact sequence therefore gives
\[
 H^0(G,\mathcal{O}_V(-D)|_{G}) \simeq H^0(G,\mathcal{S}(-D)|_{G}).
\]
Moreover, since \(G\) is a curve, its second coherent cohomology vanishes, so the same cohomology sequence gives
\[
h^1(G,\mathcal{S}(-D)|_G)
=h^1(G,\mathcal{O}_V(-D)|_G)+h^1(G,\mathcal{L}(-D)|_G).
\]

Since \(f\) is proper with fibers of dimension at most one, \(R^qf_*\mathcal{G}=0\) for \(q\ge2\) and every coherent \(\mathcal{G}\)~\cite{GrauertRemmert1984}. As \(X\) is Stein, Leray and Cartan's Theorem~B give
\begin{equation}\label{eq:coherent-H2-zero}
 H^2(V,\mathcal{G})=0.
\end{equation}

For each \(\mathcal{P} \in \{\mathcal{O}_V,\mathcal{S},\mathcal{L}\}\), consider the restriction exact sequence
\[
 0 \to \mathcal{P}(-D-G)
 \to \mathcal{P}(-D)
 \to \mathcal{P}(-D)|_{G}
 \to 0.
\]
The associated cohomology sequence and \eqref{eq:coherent-H2-zero} give
\[
 h^1(V,\mathcal{P}(-D-G)) -h^1(V,\mathcal{P}(-D))
 = c_{\mathcal{P}}(D,G) -h^1(G,\mathcal{P}(-D)|_{G}).
\]
Subtracting the formulas for \(\mathcal{L}\) and \(\mathcal{O}_V\) from the formula for \(\mathcal{S}\), and using the cohomology identity above and \eqref{eq:cL-zero}, gives the assertion.
\end{proof}

\begin{lemma}
For the first term \(D_0=E'\) of the computation sequence \eqref{eq:sequence}, we have
\begin{equation}\label{eq:initial-O}
 H^1(V,\mathcal{O}_V(-E'))=0.
\end{equation}
Consequently,
\begin{equation}\label{eq:initial-Phi}
 \Phi(E')=h^1(V,\mathcal{S}(-E'))-h^1(V,\mathcal{L}(-E')).
\end{equation}
\end{lemma}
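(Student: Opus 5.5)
We need \(H^1(V,\mathcal{O}_V(-E'))=0\) for a rational singularity with \(E'\) the reduced sum of components of valency at least two. The plan is to use the standard vanishing criterion for rational singularities. If \(Z>0\) is an exceptional cycle with \(ZE_i \le -E_i^2+\dots\), the relevant tool is the following. For rational \((X,0)\), \(h^1(\mathcal{O}_V(-Z))=0\) whenever \(Z\) is an integral exceptional cycle such that \(\mathcal{O}_V(-Z)\) has nonnegative degree on every component. More generally, it suffices that \(-Z\) can be reached from a nef cycle by a suitable computation sequence. Concretely, I would compute \(-E'\cdot E_i\) for each \(i\):
\begin{itemize}
\item for a leaf \(E_i\), we have \(E_i\not\le E'\), and \(E'E_i=1\);
\item for \(E_i\le E'\), we have \(E'E_i=-d_i+(\text{number of neighbours of }E_i\text{ in }E')\).
\end{itemize}
So \(-E'\) is not nef on leaves. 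Hence I would instead use the restriction sequence approach.

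Consider \(0\to\mathcal{O}_V(-E')\to\mathcal{O}_V\to\mathcal{O}_{E'}\to0\). Rationality gives \(H^1(\mathcal{O}_V)=0\). Since \(E'\) is a reduced connected tree of rational curves, \(H^1(\mathcal{O}_{E'})=0\) and \(H^0(\mathcal{O}_{E'})=\mathbb{C}\). The map \(H^0(\mathcal{O}_V)\to H^0(\mathcal{O}_{E'})=\mathbb{C}\) is surjective, since constants restrict to constants. The long exact sequence
\[
H^0(\mathcal{O}_V)\to H^0(\mathcal{O}_{E'})\to H^1(\mathcal{O}_V(-E'))\to H^1(\mathcal{O}_V)=0
\]
then yields \(H^1(\mathcal{O}_V(-E'))=0\). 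The only inputs to verify are connectedness of \(E'\), which holds because removing leaves from a tree leaves a tree (nonempty, as there are nodes), and \(h^1(\mathcal{O}_{E'})=0\). The latter follows from \(p_a(E')=1+E'(E'+K_V)/2=0\) using the computation \(E'(E'+K_V)=-2\) stated above, together with \(h^0(\mathcal{O}_{E'})=1\) for a reduced connected curve.

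For \eqref{eq:initial-Phi}, substitute \eqref{eq:initial-O} into the definition of \(\Phi\) at \(D=E'\). The \(h^1(\mathcal{O}_V(-E'))\) term then drops out.

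I expect no real obstacle here. The only point needing care is surjectivity on \(H^0\) and the vanishing \(h^1(\mathcal{O}_{E'})=0\), and both are immediate from \(E'\) being a connected reduced tree of smooth rational curves.
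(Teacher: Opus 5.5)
Your restriction-sequence argument is correct and is essentially the paper's proof. Connectedness of the reduced tree \(E'\) makes \(H^0(\mathcal{O}_V)\to H^0(\mathcal{O}_{E'})=\mathbb{C}\) surjective, rationality gives \(H^1(\mathcal{O}_V)=0\), hence \(H^1(\mathcal{O}_V(-E'))=0\), and the formula for \(\Phi(E')\) follows from the definition; the vanishing of \(h^1(\mathcal{O}_{E'})\) and your abandoned nef-criterion aside are not needed.
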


\begin{proof}
Since \(E'\) is connected and reduced, \(H^0(E',\mathcal{O}_{E'})=\mathbb{C}\), and the restriction map from \(H^0(V,\mathcal{O}_V)\) is surjective. The exact sequence
\[
0\to\mathcal{O}_V(-E')\to\mathcal{O}_V\to\mathcal{O}_{E'}\to0
\]
and rationality give \(H^1(V,\mathcal{O}_V(-E'))=0\). Equation~\eqref{eq:initial-Phi} follows from the definition of \(\Phi\).
\end{proof}

\subsection{Logarithmic connections and terminal cohomology}

\begin{lemma}
\label{lem:terminal-positivity}
Every coefficient of the exceptional rational cycle \([\mathcal{L}]-c_1(\mathcal{L})\) is strictly positive.
\end{lemma}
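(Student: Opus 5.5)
The plan is to compare the integral cycle \([\mathcal{L}]\) with the rational cycle determined by \(c_1(\mathcal{L})\), using only negative definiteness of the intersection form and connectivity of \(\Gamma\). Write \(Z\) for the unique rational exceptional cycle with \(ZE_i=c_1(\mathcal{L})E_i=2-t_i\) for all \(i\). It exists because the intersection matrix of \(E\) is negative definite, and \(Z\) is the numerical class of \(-(K_V+E)\). Put \(A=[\mathcal{L}]-Z\). Property (1) of the Giraud divisor gives \(AE_i\le 0\) for every \(i\).

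\emph{Step 1: \(A\ge 0\).} I will use the standard lemma for negative definite lattices: a rational cycle \(A\) with \(AE_i\le 0\) for all \(i\) is effective. Write \(A=A^+-A^-\) with \(A^\pm\ge0\) having no common components. Then
\[
0\le -AA^-=-A^+A^-+(A^-)^2\le (A^-)^2 .
\]
Negative definiteness now forces \(A^-=0\).

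\emph{Step 2: either \(A=0\) or all coefficients are positive.} Suppose \(A_G=0\) for some component \(G\). Then
\[
AG=\sum_{H\sim G}A_H\ge 0 .
\]
Combined with \(AG\le0\), this forces \(A_H=0\) for every neighbour \(H\) of \(G\). Since \(\Gamma\) is connected, propagation gives \(A=0\). So it remains to exclude \(A=0\).

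\emph{Step 3: excluding \(A=0\).} This is the main point. If \(A=0\), then \(Z=[\mathcal{L}]\) is integral. Hence \(K_V\) is numerically equivalent to the integral exceptional cycle \(-(E+[\mathcal{L}])\), so the singularity is numerically Gorenstein. Since \((X,0)\) is rational, \(H^1(V,\mathcal{O}_V)=0\), so a line bundle on \(V\) is determined by its Chern class, i.e.\ by its intersection numbers with the \(E_i\). It follows that
\[
K_V\simeq\mathcal{O}_V\bigl(-(E+[\mathcal{L}])\bigr),
\]
so \(K_V\) is trivial on \(V\setminus E\) and \((X,0)\) is Gorenstein. A rational Gorenstein surface singularity is a rational double point. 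Its minimal resolution is a tree of \((-2)\)-curves of type \(A\), \(D\) or \(E\) with normal crossings, so it coincides with the minimal good resolution. Such a graph has at most one node, contradicting the standing assumption of this section that \(\Gamma\) has at least two nodes. Therefore \(A\ne0\), and by Step 2 every coefficient of \([\mathcal{L}]-c_1(\mathcal{L})\) is strictly positive.

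The step I expect to need the most care is Step 3: passing from numerical triviality to an actual isomorphism of line bundles via rationality, and invoking the Gorenstein-rational-implies-RDP fact. If one prefers to avoid this, an alternative is to use \([\mathcal{L}]\ge E'\) together with a leaf \(E_\ell\) adjacent to a node. If \(A=0\), then \([\mathcal{L}]E_\ell=1\), which forces \(-d_\ell[\mathcal{L}]_{E_\ell}+[\mathcal{L}]_{\text{node}}=1\). One would then try to derive a contradiction from integrality along the arms, but the Gorenstein argument is cleaner.
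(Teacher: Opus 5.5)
Your proof is correct and follows the paper's argument. The Giraud inequality gives \(([\mathcal{L}]-c_1(\mathcal{L}))E_i\le 0\). Nonvanishing holds because \([\mathcal{L}]=c_1(\mathcal{L})\) would make the singularity numerically Gorenstein and rational, hence ADE with at most one node. Negative definiteness and connectedness then force every coefficient to be strictly positive. The only differences are that you prove the positivity lemma directly (Steps 1--2) instead of citing N\'emethi, and you pass explicitly through ``Gorenstein \(\Rightarrow\) rational double point'' via \(\operatorname{Pic}(V)\cong H^2(V,\mathbb{Z})\), where the paper cites N\'emethi's Example~7.1.5(d) directly.
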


\begin{proof}
We use the intersection pairing to regard \(c_1(\mathcal{L})\) as an exceptional rational cycle. By the definition of \([\mathcal{L}]\), we have
\[
 ([\mathcal{L}]-c_1(\mathcal{L}))E_i \le 0
\]
for every \(i\).

This cycle is nonzero. Indeed, if \([\mathcal{L}]=c_1(\mathcal{L})\), then \(\mathcal{L}=\mathcal{O}_V(-K_V-E)\) gives
\[
 Z_K=E+[\mathcal{L}],
\]
where \(Z_K\) is the numerical anticanonical cycle. This cycle would be integral, so the singularity would be numerically Gorenstein and rational. Its minimal good resolution graph would then be an \(ADE\) graph by N\'emethi~\cite[Example~7.1.5(d)]{Nemethi2022}. Such a graph has at most one node, contrary to our assumption.

Negative definiteness and connectedness now imply that all coefficients are positive~\cite[Corollary~2.1.19(a)]{Nemethi2022}.
\end{proof}

\begin{lemma}
\label{lem:logarithmic-connection}
Let \(\mathcal{P}\) be a line bundle on \(V\), and write its first Chern class as an exceptional rational cycle
\[
 c_1(\mathcal{P})=\sum_i b_iE_i.
\]
Then \(\mathcal{P}\) admits an integrable logarithmic connection with residue \(-b_i\) along \(E_i\).
\end{lemma}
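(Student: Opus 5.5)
The strategy is to build the connection locally from the divisor components and glue using the fact that the exceptional lattice is unimodular over \(\mathbb{Q}\) on a resolution of a rational singularity. For a rational singularity, \(\operatorname{Pic}(V)\to H^2(V,\mathbb{Z})\) is an isomorphism, because \(H^1(V,\mathcal{O}_V)=0\) and \(H^2(V,\mathcal{O}_V)=0\) by \eqref{eq:coherent-H2-zero}. Also \(H^2(V,\mathbb{Z})\simeq \operatorname{Hom}(H_2(E),\mathbb{Z})\), so a line bundle is determined up to isomorphism by its degrees on the \(E_i\). Since the intersection matrix is negative definite, \(c_1(\mathcal{P})\) is a \(\mathbb{Q}\)-combination \(\sum b_iE_i\). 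Choose a positive integer \(N\) with \(Nb_i\in\mathbb{Z}\) for all \(i\). Then \(\mathcal{P}^{\otimes N}\simeq\mathcal{O}_V(\sum Nb_iE_i)\), since both have the same degrees on every \(E_i\).

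The key step is that \(\mathcal{O}_V(D)\), for an integral divisor \(D=\sum a_iE_i\) supported on the normal crossing divisor \(E\), carries a canonical integrable logarithmic connection with residue \(-a_i\) along \(E_i\). Take local frames \(s=\prod z_i^{-a_i}\), where \(z_i\) are local equations of the \(E_i\), and set \(\nabla s = -\sum a_i\,\frac{dz_i}{z_i}\otimes s\). This is the connection \(d\) on meromorphic functions restricted to \(\mathcal{O}(D)\). It is flat, because the connection forms are closed. It is well defined, because a change of frame by a unit \(u\) changes the form by \(du/u\), which is holomorphic and hence consistent with the transition of \(d\). The residue along \(E_i\) is \(-a_i\).

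I then take the \(N\)-th root. Pick a connection \(\nabla_N\) on \(\mathcal{P}^{\otimes N}\), transported from \(\mathcal{O}_V(\sum Nb_iE_i)\) via the isomorphism; it has residues \(-Nb_i\). I want a connection \(\nabla\) on \(\mathcal{P}\) whose induced connection on \(\mathcal{P}^{\otimes N}\) is \(\nabla_N\). Locally, with \(\mathcal{P}\) having transition functions \(g_{\alpha\beta}\), the bundle \(\mathcal{P}^{\otimes N}\) has transition functions \(g_{\alpha\beta}^N\). Let \(\omega_\alpha\) be the log forms of \(\nabla_N\), satisfying \(\omega_\beta-\omega_\alpha=N\,dg_{\alpha\beta}/g_{\alpha\beta}\). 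Then the forms \(\omega_\alpha/N\) satisfy the cocycle condition for \(\mathcal{P}\). They are closed log \(1\)-forms, so they define an integrable logarithmic connection on \(\mathcal{P}\) with residues \(-b_i\).

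I expect the main obstacle to be rigor in the identification \(\mathcal{P}^{\otimes N}\simeq\mathcal{O}_V(\sum Nb_iE_i)\), which is injectivity of \(c_1\) on \(\operatorname{Pic}(V)\). This needs the exponential sequence on \(V\) together with \(H^1(V,\mathcal{O}_V)=0\) from rationality. An alternative, avoiding the \(N\)-th root, is to work directly with the Atiyah class. The obstruction to a log connection lies in \(H^1(V,\Omega^1_V(\log E))\). The class of \(\mathcal{P}\) minus \(\sum b_i[E_i]\) maps to zero in \(H^2(V,\mathbb{C})\). If needed, I would show that \(H^1(\Omega^1_V(\log E))\) injects into \(H^2(V\setminus E,\mathbb{C})\)-type data. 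The root argument is cleaner, and I would present it.
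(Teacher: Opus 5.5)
Your proposal is correct and follows the paper's proof. You use injectivity of \(c_1\) on \(\operatorname{Pic}(V)\) to get \(\mathcal{P}^{\otimes N}\simeq\mathcal{O}_V(Nc_1(\mathcal{P}))\), put on \(\mathcal{O}_V(D)\) the log connection induced by \(d\) (residues \(-Nb_i\)), and then divide the local connection forms by \(N\); you verify directly (exponential sequence, explicit frames) what the paper cites from N\'emethi and Esnault--Viehweg. The only slip is calling the exceptional lattice ``unimodular over \(\mathbb{Q}\)''; what you use is that it is nondegenerate over \(\mathbb{Q}\).
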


\begin{proof}
Choose a positive integer \(n\) such that \(nc_1(\mathcal{P})\) is an integral exceptional divisor. For a rational surface singularity, the Chern class map
\[
 c_1 \colon \operatorname{Pic}(V) \to H^2(V,\mathbb{Z})
\]
is an isomorphism by N\'emethi~\cite[Proposition~7.1.10]{Nemethi2022}. The equality of Chern classes therefore gives
\[
 \mathcal{P}^{\otimes n} \simeq \mathcal{O}_V(nc_1(\mathcal{P})).
\]
The trivial connection on \(\mathcal{O}_V\) induces an integrable logarithmic connection on \(\mathcal{O}_V(nc_1(\mathcal{P}))\), with residue \(-nb_i\) along \(E_i\); see Esnault--Viehweg~\cite[Lemma~2.7]{EsnaultViehweg1992}.

Let \(\nabla^{(n)}\) denote the induced connection on
\(\mathcal{P}^{\otimes n}\).  Choose an open cover \(\{U_\alpha\}\) of
\(V\) such that \(\mathcal{P}\) has a nowhere vanishing holomorphic
section \(s_\alpha\) on each \(U_\alpha\). We can therefore write
\[
 \nabla^{(n)}(s_\alpha^{\otimes n})
 =s_\alpha^{\otimes n} \otimes A_\alpha,
\]
where \(A_\alpha\) is a logarithmic one-form on \(U_\alpha\). On
overlaps, \(s_\beta=g_{\alpha\beta}s_\alpha\), so
\[
A_\beta=A_\alpha+n\,dg_{\alpha\beta}/g_{\alpha\beta}.
\]
Thus the forms \(A_\alpha/n\) define a logarithmic connection on \(\mathcal{P}\). It is integrable because \(dA_\alpha=0\), and its residue along \(E_i\) is \(-b_i\).
\end{proof}
Fix a sufficiently small Stein representative \(X\), with resolution \(f \colon V \to X\), and let \(j \colon V-E \hookrightarrow V\).

\begin{lemma}
\label{lem:local-system-vanishing}
For every rank-one complex local system \(\mathbb{V}\) on \(V-E\), we have
\[
 R\Gamma(V,j_!\mathbb{V})=0.
\]
\end{lemma}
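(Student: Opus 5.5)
We will prove the vanishing by Verdier duality together with the topology of the pair \((V,E)\). Since \(X\) is a sufficiently small Stein (conic) representative, \(V\) is a compact-type neighbourhood of \(E\): it deformation retracts onto \(E\), and \(V-E\) is homotopy equivalent to the link \(L\times(0,1)\), i.e.\ to the link \(L\). The key observation is the distinguished triangle
\[
 j_!\mathbb{V}\to Rj_*\mathbb{V}\to i_*i^*Rj_*\mathbb{V}\xrightarrow{+1},
\]
where \(i\colon E\hookrightarrow V\). We plan to show that the second map induces an isomorphism on \(R\Gamma(V,-)\), which forces \(R\Gamma(V,j_!\mathbb{V})=0\).

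First, \(R\Gamma(V,Rj_*\mathbb{V})=R\Gamma(V-E,\mathbb{V})\), which by the homotopy equivalence \(V-E\simeq L\) is \(R\Gamma(L,\mathbb{V})\). Second, \(R\Gamma(V,i_*i^*Rj_*\mathbb{V})=R\Gamma(E,i^*Rj_*\mathbb{V})\). Since \(E\) is compact and has a fundamental system of neighbourhoods \(V_\varepsilon\) (preimages of smaller balls), each of which is a deformation retract of \(V\) in a way compatible with \(V_\varepsilon-E\hookrightarrow V-E\) being a homotopy equivalence, the standard continuity statement \(R\Gamma(E,i^*\mathcal{F})=\varinjlim R\Gamma(V_\varepsilon,\mathcal{F})\) for constructible \(\mathcal{F}\) gives
\[
 R\Gamma(E,i^*Rj_*\mathbb{V})=\varinjlim_\varepsilon R\Gamma(V_\varepsilon-E,\mathbb{V}),
\]
and all transition maps are isomorphisms to \(R\Gamma(L,\mathbb{V})\). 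The restriction map \(R\Gamma(V,Rj_*\mathbb{V})\to R\Gamma(E,i^*Rj_*\mathbb{V})\) is then identified with the identity of \(R\Gamma(L,\mathbb{V})\). Hence the cone, which is \(R\Gamma(V,j_!\mathbb{V})[1]\), vanishes.

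Equivalently, one can phrase this as: \(H^k(V,j_!\mathbb{V})=H^k(V,E;\mathbb{V})\) in the relative sense, and \(V\) retracts to \(E\) compatibly with the local system pushed forward, so relative cohomology vanishes. Either formulation works; we will use the triangle since it avoids extending \(\mathbb{V}\) across \(E\) (it may have nontrivial monodromy around the \(E_i\)).

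The main obstacle is the continuity/retraction step: one must justify that \(V_\varepsilon-E\hookrightarrow V-E\) is a homotopy equivalence and that \(Rj_*\mathbb{V}\) is constructible with respect to a stratification for which the conic structure theorem applies. We will take the function \(\rho=\|f\|^2\) (distance to \(0\) in an embedding of \(X\)); for small representatives it has no critical points on \(X-\{0\}\) by Milnor's conic structure theorem, and since \(f\) is an isomorphism off \(E\), the flow of \(-\nabla\rho\) on \(V-E\) gives the required deformation retraction, while constructibility of \(Rj_*\mathbb{V}\) holds because \(E\) is a normal crossing divisor, where \(Rj_*\) of a local system is locally computed by a torus \((S^1)^k\times\) ball.
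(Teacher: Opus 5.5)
Your proposal is correct and follows essentially the same route as the paper. It uses the localization triangle \(j_!\mathbb{V}\to Rj_*\mathbb{V}\to i_*i^{-1}Rj_*\mathbb{V}\), continuity of cohomology on the compact set \(E\) over the cofinal neighborhoods \(f^{-1}(X_\delta)\), and the conic-structure homotopy equivalences \(f^{-1}(X_\delta)-E\hookrightarrow V-E\), which together make the restriction map a quasi-isomorphism.

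The opening reference to Verdier duality is never used. Constructibility of \(Rj_*\mathbb{V}\) is also unnecessary, since the continuity statement (Kashiwara--Schapira, Prop.~2.5.1) holds for arbitrary bounded-below complexes on a compact subset of a Hausdorff space. You should, however, spell out the short properness argument that the \(f^{-1}(X_\delta)\) are cofinal among neighborhoods of \(E\), which the paper includes.
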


\begin{proof}
We describe the neighborhoods used to establish this vanishing
result. Choose \(X\) as the intersection of a local analytic model of
the singularity with a sufficiently small open ball of radius
\(\varepsilon\). For \(0<\delta<\varepsilon\), let \(X_\delta \subset
X\) be its intersection with the concentric open ball of radius
\(\delta\), and put
\[
 W_\delta=f^{-1}(X_\delta).
\]
The local conical structure of an isolated complex singularity identifies these punctured neighborhoods with products of the same link and radial intervals. In particular, for \(\varepsilon\) sufficiently small, the inclusions
\[
 X_\delta-\{0\} \hookrightarrow X-\{0\}
\]
are homotopy equivalences. Since \(f\) is an isomorphism away from the exceptional divisor, the corresponding inclusions
\[
 W_\delta-E \hookrightarrow V-E
\]
are also homotopy equivalences, and induce isomorphisms on cohomology with coefficients in the local system \(\mathbb{V}\).

The neighborhoods \(W_\delta\) are cofinal among neighborhoods of \(E\): if \(W\supset E\) is open, properness makes \(f(V\setminus W)\) closed and disjoint from \(0\), so \(X_\delta\cap f(V\setminus W)=\emptyset\) for all sufficiently small \(\delta\), and hence \(W_\delta\subset W\).

Now put \(K=Rj_*\mathbb{V}\), and let \(i \colon E \hookrightarrow V\). For every sufficiently small \(\delta\), the natural identifications
\[
 R\Gamma(V,K) \simeq R\Gamma(V-E,\mathbb{V}),
 \quad
 R\Gamma(W_\delta,K) \simeq R\Gamma(W_\delta-E,\mathbb{V})
\]
show that the restriction morphism
\[
 R\Gamma(V,K) \to R\Gamma(W_\delta,K)
\]
is a quasi-isomorphism. Since \(E\) is compact, cohomology on \(E\) with coefficients in \(i^{-1}K\) is obtained as the direct limit of cohomology over its neighborhoods; see Kashiwara--Schapira~\cite[Proposition~2.5.1 and Remark~2.6.9]{KashiwaraSchapira1990}. Using the cofinal system \(W_\delta\), we conclude that the natural restriction morphism
\[
 R\Gamma(V,K) \to R\Gamma(E,i^{-1}K)
\]
is a quasi-isomorphism.

Since \(j^{-1}K \simeq \mathbb{V}\), the localization triangle is
%
%
\[
\begin{tikzcd}[row sep=2.5em, column sep=2em]
	j_!\mathbb{V} \arrow[rr] & & K \arrow[dl] \\
	& i_*i^{-1}K \arrow[ul, "+1" description] &
\end{tikzcd}
\]
Applying \(R\Gamma(V,-)\), the map between the second and third terms becomes the quasi-isomorphism just established. The first term must therefore vanish, which proves the lemma.
\end{proof}

\begin{lemma}
\label{lem:positive-residue-comparison}
Let \(\mathcal{P}\) be a line bundle on \(V\) with an integrable logarithmic connection whose residues along all components of \(E\) are positive. If \(H^1(V,\mathcal{P})=0\), then the logarithmic de Rham differential induces an isomorphism
\[
 H^1(V,\Omega_V^1(\log E) \otimes \mathcal{P})
 \to H^1(V,\Omega_V^2(\log E) \otimes \mathcal{P}).
\]
\end{lemma}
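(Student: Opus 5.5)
We will compare the logarithmic de Rham complex of \(\mathcal{P}\) with the extension by zero \(j_!\mathbb{V}\), where \(\mathbb{V}=\ker\nabla|_{V-E}\) is the rank-one local system of flat sections. The guiding fact is the classical one (Deligne, Esnault--Viehweg~\cite{EsnaultViehweg1992}): for an integrable logarithmic connection on a line bundle along a normal crossing divisor, whose residues along every component avoid \(\{0,-1,-2,\dots\}\) (more precisely, are positive in our sign convention), the logarithmic de Rham complex
\[
 \Omega^\bullet_V(\log E)\otimes\mathcal{P}=\bigl[\mathcal{P}\xrightarrow{\nabla}\Omega^1_V(\log E)\otimes\mathcal{P}\xrightarrow{\nabla}\Omega^2_V(\log E)\otimes\mathcal{P}\bigr]
\]
is quasi-isomorphic to \(j_!\mathbb{V}\). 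With the sign convention of Lemma~\ref{lem:logarithmic-connection}, positive residues are the ones for which the twisted complex computes compactly supported (extension by zero) cohomology rather than \(Rj_*\mathbb{V}\). I will first verify this local statement at a smooth point of \(E\) and at a double point, using the explicit Koszul-type computation in coordinates \(x,y\) with \(\nabla = d + a\,dx/x + b\,dy/y\) and \(a,b>0\). There the stalk complex on monomials \(x^my^n\) is acyclic because the multipliers \(m+a\) and \(n+b\) never vanish, and away from \(E\) the complex resolves \(\mathbb{V}\).

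Granting the quasi-isomorphism, Lemma~\ref{lem:local-system-vanishing} gives \(\mathbb{H}^\ast(V,\Omega^\bullet_V(\log E)\otimes\mathcal{P})=0\) in all degrees. I will then run the hypercohomology spectral sequence for the stupid filtration,
\[
 E_1^{p,q}=H^q(V,\Omega^p_V(\log E)\otimes\mathcal{P})\Rightarrow 0,
\]
with \(p\in\{0,1,2\}\). Since \(V\) is a resolution over a Stein space with fibers of dimension at most one, \eqref{eq:coherent-H2-zero} gives \(E_1^{p,q}=0\) for \(q\ge2\). So only the rows \(q=0,1\) survive. The hypothesis \(H^1(V,\mathcal{P})=0\) kills \(E_1^{0,1}\).

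The row \(q=1\) is now
\[
 0\to H^1(\Omega^1(\log E)\otimes\mathcal{P})\xrightarrow{d_1} H^1(\Omega^2(\log E)\otimes\mathcal{P}),
\]
and \(d_1\) is exactly the map induced by \(\nabla\). In a three-column spectral sequence, the only further differential touching \(E^{1,1}\) or \(E^{2,1}\) is \(d_2\colon E_2^{p,q}\to E_2^{p+2,q-1}\). Consider \(d_2\) from \(E_2^{0,1}\), which is zero since that term vanishes. Consider next \(d_2\) into \(E_2^{2,1}\) from \(E_2^{0,2}=0\). Likewise \(d_2\) out of \(E_2^{1,1}\) lands in \(E_2^{3,0}=0\), and \(d_2\) from \(E_2^{2,1}\) goes to column \(4\). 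Hence \(E_2^{1,1}=\ker d_1\) and \(E_2^{2,1}=\coker d_1\) already equal \(E_\infty\), and both must vanish because the abutment is zero. Therefore \(d_1\) is an isomorphism, which is the claim.

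The main obstacle is the local quasi-isomorphism with \(j_!\mathbb{V}\), and in particular getting the sign convention of the residue right. Lemma~\ref{lem:logarithmic-connection} writes the residue as \(-b_i\), and the correct statement must match. If positivity corresponds to \(Rj_*\) instead, I would apply the argument to the dual bundle and then use Serre-type duality. A second subtlety is that the stalk computation must be done on the completed or convergent level: there I will check that the contracting homotopy \(x^my^n\mapsto x^my^n/(m+a)\) preserves convergence. It does, since \(|m+a|\ge a>0\) is bounded below.
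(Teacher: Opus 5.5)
Your proposal is correct and follows the paper's proof essentially step for step. You identify the logarithmic de Rham complex with \(j_!\mathbb{V}\) (the paper cites Deligne and Esnault--Viehweg where you propose the local Koszul check), deduce vanishing of hypercohomology from Lemma~\ref{lem:local-system-vanishing}, and read off the isomorphism from the two-row, three-column spectral sequence using \(H^1(V,\mathcal{P})=0\). Your sign concern also resolves in your favor: under the convention of Lemma~\ref{lem:logarithmic-connection}, \(\mathcal{O}_V(\nu E_i)\) has residue \(-\nu\), which is the \(a\) in \(\nabla=d+a\,dx/x\). So positive residues do give \(j_!\mathbb{V}\), and the duality fallback is not needed.
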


\begin{proof}
Write \(\nabla\) for the connection on \(\mathcal{P}\). Its logarithmic de Rham complex is
\[
 \operatorname{DR}(\mathcal{P},\nabla)
 = [\mathcal{P} \to \Omega_V^1(\log E) \otimes \mathcal{P}
 \to \Omega_V^2(\log E) \otimes \mathcal{P}],
\]
placed in degrees \(0,1,2\). Its differentials are induced by \(\nabla\) and are \(\mathbb{C}\)-linear.

Let \(j \colon V-E \hookrightarrow V\) be the inclusion, and let
\(\mathbb{V}\) be the local system of horizontal sections of
\(\nabla\) on \(V-E\).
Deligne's logarithmic de Rham comparison theorem, together with
Poincar\'e--Verdier duality, identifies the logarithmic de Rham
complex with the extension by zero of its horizontal local system
when no residue is a nonpositive integer; see
Deligne~\cite[II, Proposition~3.13]{Deligne1970} and the
extension-by-zero formulation in
Esnault--Viehweg~\cite[\S2.11]{EsnaultViehweg1992}.
Since all residues of \(\nabla\) are positive, the natural morphism
\[
j_!\mathbb{V} \longrightarrow
\operatorname{DR}(\mathcal{P},\nabla)
\]
is a quasi-isomorphism.
Lemma~\ref{lem:local-system-vanishing} therefore gives
\[
 \mathbb{H}^k(V,\operatorname{DR}(\mathcal{P},\nabla))=0
\]
for every \(k\).

The vanishing used in Equation~\eqref{eq:coherent-H2-zero} gives
\[
 H^q(V,\mathcal{G})=0 \quad(q \ge 2)
\]
for every coherent sheaf \(\mathcal{G}\) on \(V\). Consider the hypercohomology spectral sequence
\[
 E_1^{p,q}=H^q(V,\Omega_V^p(\log E) \otimes \mathcal{P})
 \Longrightarrow
 \mathbb{H}^{p+q}(V,\operatorname{DR}(\mathcal{P},\nabla)).
\]
Only the columns \(p=0,1,2\) and the rows \(q=0,1\) can be
nonzero. Since \(H^1(V,\mathcal{P})=0\), the terms
\(E_2^{1,1}\) and \(E_2^{2,1}\) are the kernel and cokernel
of the map in the statement. No higher differential enters or
leaves either term: incoming differentials have source in a
row \(q\ge2\), and outgoing differentials have target in a
column \(p\ge3\). Thus \(E_2^{1,1}=E_\infty^{1,1}=0\) and
\(E_2^{2,1}=E_\infty^{2,1}=0\), by the vanishing of
hypercohomology.
\end{proof}

\begin{lemma}
\label{lem:terminal}
For the last term \(D_N=[\mathcal{L}]\) of \eqref{eq:sequence}, \(\Phi([\mathcal{L}])=0\).
\end{lemma}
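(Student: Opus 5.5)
We need \(\Phi([\mathcal L])=h^1(\mathcal S(-[\mathcal L]))-h^1(\mathcal L(-[\mathcal L]))-h^1(\mathcal O_V(-[\mathcal L]))=0\). The plan is to twist the residue sequence of \(E\) by \(\mathcal P:=\mathcal O_V(-[\mathcal L])\) and compare it with the logarithmic de Rham complex of \(\mathcal P\).

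\emph{Identifying the three sheaves.} On a surface, \(\Omega^1_V(\log E)\otimes\Omega^2_V(\log E)^{-1}\simeq \Theta_V(-\log E)=\mathcal S\), and \(\Omega^2_V(\log E)=\mathcal O_V(K_V+E)=\mathcal L^{-1}\). Set \(\mathcal P':=\mathcal P\otimes\mathcal L\), so that \(\mathcal P'=\mathcal L(-[\mathcal L])\). Then
\[
\Omega^1_V(\log E)\otimes \mathcal P'\simeq\mathcal S(-[\mathcal L]),\qquad
\Omega^2_V(\log E)\otimes \mathcal P'\simeq\mathcal O_V(-[\mathcal L]),\qquad
\mathcal P'=\mathcal L(-[\mathcal L]).
\]
So the logarithmic de Rham complex of \(\mathcal P'\) consists of exactly the three sheaves appearing in \(\Phi([\mathcal L])\).

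\emph{Residues.} We have \(c_1(\mathcal P')=c_1(\mathcal L)-[\mathcal L]\). By Lemma~\ref{lem:logarithmic-connection}, \(\mathcal P'\) carries an integrable logarithmic connection whose residue along \(E_i\) is the \(E_i\)-coefficient of \([\mathcal L]-c_1(\mathcal L)\). By Lemma~\ref{lem:terminal-positivity}, all of these residues are strictly positive.

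\emph{Hypercohomology and the spectral sequence.} The argument in the proof of Lemma~\ref{lem:positive-residue-comparison}, via Deligne's comparison and Lemma~\ref{lem:local-system-vanishing}, shows that \(\mathbb H^k(\operatorname{DR}(\mathcal P',\nabla))=0\) for every \(k\), and this does not require \(H^1(\mathcal P')=0\). Since coherent \(H^q\) vanishes for \(q\ge2\), the hypercohomology spectral sequence has only two rows, \(q=0,1\). Hence the Euler characteristic of the row \(q=1\) equals that of the row \(q=0\) up to sign, both being computed from the vanishing total cohomology.

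To make this clean, I would rather prove vanishing of all \(H^0\) terms. In the row \(q=0\), the terms are \(H^0\) of \(\mathcal L(-[\mathcal L])\), \(\mathcal S(-[\mathcal L])\) and \(\mathcal O(-[\mathcal L])\). These are infinite-dimensional on a Stein neighborhood, so naive Euler characteristics fail. That is the main obstacle.

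\emph{Handling the infinite-dimensional \(H^0\).} The alternative is to use the spectral sequence structure directly. Since \(E_\infty=0\) everywhere, the row \(q=1\) complex
\[
H^1(\mathcal P')\to H^1(\mathcal S(-[\mathcal L]))\to H^1(\mathcal O(-[\mathcal L]))
\]
must have its \(E_2\) terms killed by the \(d_2\) maps from the row \(q=0\). Specifically, \(d_2\colon E_2^{p,0}\to E_2^{p+2,1}\) with \(p\le 0\) can hit only \(E_2^{2,1}\). Therefore
\[
E_2^{0,1}=0,\qquad E_2^{1,1}=0,\qquad E_2^{2,1}\simeq E_2^{0,0}.
\]
Here \(E_2^{0,0}\) is the space of horizontal sections, and this vanishes: a nonzero horizontal section would give a section of \(\mathbb V\) on \(V-E\) extending into \(j_!\), contradicting \(\mathbb H^0=0\). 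Hence \(E_2^{2,1}=0\) as well.

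\emph{Exactness and dimension count.} The sequence
\[
0\to H^1(\mathcal L(-[\mathcal L]))\to H^1(\mathcal S(-[\mathcal L]))\to H^1(\mathcal O(-[\mathcal L]))\to0
\]
is therefore exact. These groups are finite-dimensional because \(R^1f_*\) is supported at the point. Taking dimensions gives \(\Phi([\mathcal L])=0\).
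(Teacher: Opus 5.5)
There is a genuine gap in the spectral-sequence step. In the hypercohomology spectral sequence \(E_1^{p,q}=H^q(V,\Omega^p_V(\log E)\otimes\mathcal P')\), the differential \(d_2\) has bidegree \((2,-1)\). It raises total degree by one and goes from row \(q=1\) down to row \(q=0\), so the only possibly nonzero one is \(d_2\colon E_2^{0,1}\to E_2^{2,0}\). Your map \(E_2^{p,0}\to E_2^{p+2,1}\) would raise total degree by three.

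With the correct direction, vanishing of \(\mathbb H^\ast\) gives \(E_2^{1,1}=E_2^{2,1}=0\). Hence \(H^1(\mathcal S(-[\mathcal L]))\to H^1(\mathcal O_V(-[\mathcal L]))\) is surjective, with kernel the image of \(H^1(\mathcal L(-[\mathcal L]))\). But for the left end you only get
\[
E_2^{0,1}=\ker\bigl(H^1(\mathcal L(-[\mathcal L]))\to H^1(\mathcal S(-[\mathcal L]))\bigr)\;\cong\;E_2^{2,0}=\coker\bigl(H^0(\mathcal S(-[\mathcal L]))\xrightarrow{\nabla^1}H^0(\mathcal O_V(-[\mathcal L]))\bigr).
\]
Nothing in your argument controls this cokernel of infinite-dimensional spaces. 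So injectivity at the left end of your short exact sequence is unproven, and what you actually obtain is \(\Phi([\mathcal L])=-\dim E_2^{0,1}\le 0\). (Your \(E_2^{0,0}\) argument does not help; with the correct bidegree, \(E_2^{0,0}\) and \(E_2^{2,1}\) are simply \(E_\infty\)-terms and vanish directly.)

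The missing ingredient is exactly the vanishing you set aside as unnecessary. By \eqref{eq:translation}, \([\mathcal L(-[\mathcal L])]=0\), so \eqref{eq:cohomology-formula} gives \(H^1(V,\mathcal L(-[\mathcal L]))=0\). Then \(E_2^{0,1}=0\) trivially, and the row-one complex reduces to an isomorphism \(H^1(\mathcal S(-[\mathcal L]))\cong H^1(\mathcal O_V(-[\mathcal L]))\); this is Lemma~\ref{lem:positive-residue-comparison}. Hence \(\Phi([\mathcal L])=0\), which is the paper's proof. Incidentally, the inequality \(\Phi([\mathcal L])\le 0\) that your corrected argument yields without this vanishing is the only direction used in the proof of Proposition~\ref{prop:cohomological-comparison}. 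It does not, however, give the equality claimed in the lemma.
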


\begin{proof}
Equation~\eqref{eq:translation} gives
\[
 [\mathcal{L}(-[\mathcal{L}])]=[\mathcal{L}]-[\mathcal{L}]=0.
\]
Applying Equation~\eqref{eq:cohomology-formula} to \(\mathcal{L}(-[\mathcal{L}])\), we therefore obtain
\[
 H^1(V,\mathcal{L}(-[\mathcal{L}]))=0.
\]
By Lemmas~\ref{lem:terminal-positivity} and \ref{lem:logarithmic-connection}, the line bundle \(\mathcal{L}(-[\mathcal{L}])\) admits an integrable logarithmic connection whose residues are the strictly positive coefficients of \([\mathcal{L}]-c_1(\mathcal{L})\). Lemma~\ref{lem:positive-residue-comparison} applies. Using the identities
\[
 \Omega_V^1(\log E) \otimes \mathcal{L} \simeq \mathcal{S},
 \quad
 \Omega_V^2(\log E) \otimes \mathcal{L} \simeq \mathcal{O}_V,
\]
we obtain
\[
 H^1(V,\mathcal{S}(-[\mathcal{L}]))
 \simeq H^1(V,\mathcal{O}_V(-[\mathcal{L}])).
\]
Together with \(H^1(V,\mathcal{L}(-[\mathcal{L}]))=0\), this proves \(\Phi([\mathcal{L}])=0\).
\end{proof}

\subsection{Nonzero residues}

Write
\[
 c_1(\mathcal{L})=\sum_i a_iE_i.
\]
For each \(0 \le j<N\), Lemma~\ref{lem:logarithmic-connection} gives an integrable logarithmic connection \(\nabla_j\) on \(\mathcal{L}(-D_j)\). Its residue along \(E_i\) is
\[
 \rho_{E_i}(D_j):=(D_j)_{E_i}-a_i.
\]

At a step with nonzero residue, the next lemma and
Lemma~\ref{lem:increment} show that
\(\Phi(D_{j+1})\ge\Phi(D_j)\). We treat the zero-residue block
in Section~\ref{sec:cohomological-comparison}.

\begin{lemma}
\label{lem:nonzero}
Let \(0 \le j<N\). If \(\rho_{G_j}(D_j) \ne 0\), then
\[
 c_{\mathcal{S}}(D_j,G_j) \ge c_{\mathcal{O}}(D_j,G_j).
\]
\end{lemma}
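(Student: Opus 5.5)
The plan is to construct, from the logarithmic connection \(\nabla_j\) on \(\mathcal{L}(-D_j)\), a \(\mathbb{C}\)-linear map \(\delta\colon H^0(\mathcal{S}(-D))\to H^0(\mathcal{O}_V(-D))\), with \(D=D_j\) and \(G=G_j\). This map should be compatible with restriction to \(G\), and on \(G\) it should induce an injective map between the (equal-dimensional) spaces of sections. Then the image of \(H^0(\mathcal{S}(-D))\) in \(H^0(\mathcal{S}(-D)|_G)\) injects into the image of \(H^0(\mathcal{O}_V(-D))\) in \(H^0(\mathcal{O}_V(-D)|_G)\). Recall from the proof of Lemma~\ref{lem:increment} that \(H^0(G,\mathcal{O}_V(-D)|_G)\simeq H^0(G,\mathcal{S}(-D)|_G)\), since \(H^0(G,\mathcal{L}(-D)|_G)=0\) at a computation step. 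Hence both ambient spaces have the same dimension, and comparing images gives \(c_{\mathcal{S}}(D,G)\ge c_{\mathcal{O}}(D,G)\).

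The map \(\delta\) is the second differential of the logarithmic de Rham complex of \((\mathcal{L}(-D),\nabla_j)\). We write it via the identifications used in Lemma~\ref{lem:terminal}:
\[
 \mathcal{S}(-D)\simeq\Omega^1_V(\log E)\otimes\mathcal{L}(-D)\xrightarrow{\ \nabla_j\ }\Omega^2_V(\log E)\otimes\mathcal{L}(-D)\simeq\mathcal{O}_V(-D).
\]
The first step is to check that \(\delta\) preserves the subsheaves obtained by twisting by \(-G\). The connection restricts to a logarithmic connection on \(\mathcal{L}(-D-G)\subset\mathcal{L}(-D)\): locally, with \(z\) an equation of \(G\), we have \(\nabla(zs)=z(\nabla s+s\,dz/z)\). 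So \(\delta\) maps \(\Omega^1(\log E)\otimes\mathcal{L}(-D-G)\) into \(\Omega^2(\log E)\otimes\mathcal{L}(-D-G)\). Consequently \(\delta\) descends to a \(\mathbb{C}\)-linear sheaf map \(\bar\delta\colon \mathcal{S}(-D)|_G\to\mathcal{O}_V(-D)|_G\), which may be a differential operator along \(G\). Restriction then intertwines \(\delta\) on global sections with \(\varphi:=H^0(\bar\delta)\).

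The key local computation, which I expect to be the main point, is to show that \(\bar\delta\) restricted to the subsheaf \(\mathcal{O}_V(-D)|_G\subset\mathcal{S}(-D)|_G\) is multiplication by \(\pm\rho_G(D)\). Near a point of \(G\), take coordinates \((z,w)\) with \(G=\{z=0\}\) and, if needed, \(w\) defining the other component. Take a local frame \(e\) of \(\mathcal{L}(-D)\) with \(\nabla e=e\otimes(\rho\,dz/z+\rho'\,dw/w+\eta)\), where \(\eta\) is holomorphic; the coefficient \(\rho\) is the residue \(\rho_G(D)\). The Euler field \(z\partial_z\) spans the subsheaf \(\mathcal{O}_G\subset\mathcal{S}|_G\), and it corresponds to a log \(1\)-form of the shape \(dw\) or \(dw/w\) times a frame. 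Applying \(\nabla\) to \(h(w)\cdot(\text{that form})\otimes e\), the term \(dh\wedge dw\) vanishes, and the \(dw/w\) and \(\eta\) contributions wedge to something divisible by \(z\) relative to \(dz/z\wedge dw\). Only \(\rho\,h\,dz/z\wedge(\cdot)\) survives on \(G\). One also has to check this against the twist by \(\mathcal{O}(-D)\) and the gluing of local frames, but the residue is intrinsic, so the answer should be globally \(\pm\rho_G(D)\cdot\mathrm{id}\).

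Since every global section of \(\mathcal{S}(-D)|_G\) lies in \(H^0(\mathcal{O}_V(-D)|_G)\), the map \(\varphi\) on global sections equals \(\pm\rho_G(D)\) times this isomorphism. Because \(\rho_G(D)\ne0\), \(\varphi\) is injective. Therefore \(\varphi\) maps \(\im[H^0(\mathcal{S}(-D))\to H^0(\mathcal{S}(-D)|_G)]\) injectively into \(\im[H^0(\mathcal{O}_V(-D))\to H^0(\mathcal{O}_V(-D)|_G)]\). Taking codimensions in spaces of equal dimension gives the lemma.
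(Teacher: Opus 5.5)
Your proposal is correct and follows essentially the same route as the paper: both use the second differential of the logarithmic de Rham complex of \((\mathcal{L}(-D_j),\nabla_j)\), descend it to the restriction to \(G_j\), and identify it on \(H^0(G_j,\mathcal{O}_V(-D_j)|_{G_j})\simeq H^0(G_j,\mathcal{S}(-D_j)|_{G_j})\) with multiplication by \(\pm\rho_{G_j}(D_j)\). The only difference is bookkeeping: you compare images via an injection, whereas the paper (which cites Esnault--Viehweg for the residue identification that you verify locally) phrases the conclusion as a surjection \(\coker r_{\mathcal{S},j}\twoheadrightarrow\coker r_{\mathcal{O},j}\); these are equivalent because the ambient spaces have equal dimension.
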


\begin{proof}
As in the proof of Lemma~\ref{lem:increment}, \(\deg(\mathcal{L}(-D_j)|_{G_j})<0\), and the restriction sequence identifies
\[
H^0(G_j,\mathcal{S}(-D_j)|_{G_j})
\simeq H^0(G_j,\mathcal{O}_V(-D_j)|_{G_j}).
\]

The logarithmic de Rham complex of \((\mathcal{L}(-D_j),\nabla_j)\) has the form
\[
 \mathcal{L}(-D_j)
 \xrightarrow{\nabla_j}
 \mathcal{S}(-D_j)
 \xrightarrow{\nabla_j^1}
 \mathcal{O}_V(-D_j).
\]
The second differential preserves sections vanishing along \(G_j\), and hence induces a map on the restrictions to \(G_j\). Let \(r_{\mathcal{S},j}\) and \(r_{\mathcal{O},j}\) denote the restriction maps on global sections. We obtain a commutative diagram
\[
\begin{tikzcd}[column sep=large]
 H^0(V,\mathcal{S}(-D_j))
 \arrow[r,"r_{\mathcal{S},j}"]
 \arrow[d,"\nabla_j^1"']
 &
 H^0(G_j,\mathcal{S}(-D_j)|_{G_j})
 \arrow[d,"\overline{\nabla}_j^1"]
 \\
 H^0(V,\mathcal{O}_V(-D_j))
 \arrow[r,"r_{\mathcal{O},j}"']
 &
 H^0(G_j,\mathcal{O}_V(-D_j)|_{G_j}).
\end{tikzcd}
\]

On the subbundle
\[
 \mathcal{O}_V(-D_j)|_{G_j} \subset \mathcal{S}(-D_j)|_{G_j},
\]
the induced differential is, up to sign, multiplication by the residue \(\rho_{G_j}(D_j)\); see Esnault--Viehweg~\cite[Lemma~2.6(b)]{EsnaultViehweg1992}. Under the preceding identification of section spaces, \(\overline{\nabla}_j^1\) is multiplication by the nonzero scalar \(\pm\rho_{G_j}(D_j)\), hence is an isomorphism.

Since the right-hand vertical map is an isomorphism, the
diagram induces a surjection
\(\coker r_{\mathcal{S},j}\twoheadrightarrow
\coker r_{\mathcal{O},j}\), which gives the inequality.
\end{proof}

\section{A comparison of logarithmic cohomology groups}
\label{sec:cohomological-comparison}

We retain the notation and hypotheses of Section~\ref{sec:computation-sequences} until the final subsection, where we allow arbitrary resolution graphs. The proof of the following proposition is at the end of Subsection~\ref{subsec:zero-residue-cohomology}.

\begin{proposition}\label{prop:cohomological-comparison}
If the resolution graph of a rational surface singularity is neither a chain nor a star-shaped graph, then
\[
 h^1(V,\mathcal{S}(-E')) \le h^1(V,\mathcal{L}(-E')).
\]
\end{proposition}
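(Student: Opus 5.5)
The plan is to show that \(\Phi\) is nondecreasing along the computation sequence \eqref{eq:sequence}, and then compare its two endpoints. By \eqref{eq:initial-Phi}, \(\Phi(E')=h^1(V,\mathcal{S}(-E'))-h^1(V,\mathcal{L}(-E'))\), and by Lemma~\ref{lem:terminal}, \(\Phi([\mathcal{L}])=0\). So it suffices to prove \(\Phi(D_{j+1})\ge\Phi(D_j)\) for every \(0\le j<N\), or at least that the total increment from \(D_0\) to \(D_N\) is nonnegative. Lemma~\ref{lem:increment} expresses each increment as \(c_{\mathcal{S}}(D_j,G_j)-c_{\mathcal{O}}(D_j,G_j)\). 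Lemma~\ref{lem:nonzero} already shows this is nonnegative whenever the residue \(\rho_{G_j}(D_j)\) is nonzero. Hence everything reduces to the steps with \(\rho_{G_j}(D_j)=0\), that is, \((D_j)_{G_j}=a_{G_j}\).

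For the zero-residue steps I would first analyse where they can occur. Since \(a_i\) are the coefficients of \(c_1(\mathcal{L})\) and the coefficients of \(D_j\) increase from those of \(E'\) to those of \([\mathcal{L}]\), a zero residue at \(G\) occurs at most once per component: exactly when the coefficient of \(G\) passes the integer value \(a_G\). By Lemma~\ref{lem:terminal-positivity}, \(([\mathcal{L}])_G>a_G\), so such a crossing indeed happens whenever \(a_G\) is an integer that is at least \((E')_G\).

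At such a step I would bound \(c_{\mathcal{O}}(D_j,G_j)\) directly. Write \(G=G_j\) and \(D=D_j\). Then \(\mathcal{O}_V(-D)|_G\) has degree \(-DG<\val(G)-2\), and \(H^1(V,\mathcal{O}_V(-D-G))\) is controlled by rationality, for instance through Laufer-type vanishing when \(D+G\) is sufficiently antinef. The goal is either \(c_{\mathcal{O}}(D,G)=0\), or that the cokernel for \(\mathcal{O}\) is dominated by the one for \(\mathcal{S}\). For the latter I would use the injection \(\mathcal{O}_V(-D)|_G\subset\mathcal{S}(-D)|_G\) together with the isomorphism on \(H^0\) from the proof of Lemma~\ref{lem:increment}. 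It then remains to show that every global section of \(\mathcal{O}_V(-D)\) restricting to a given section on \(G\) lifts to a section of \(\mathcal{S}(-D)\) with the same restriction. A natural candidate is to multiply a fixed logarithmic Euler-type vector field, nonvanishing along \(G\) modulo \(\Theta\)-directions, by the function. Failing that, I would group each zero-residue step with the neighbouring nonzero-residue steps in a block. The idea is to reorder the computation sequence, which Giraud's algorithm allows whenever several components are admissible, so that the zero-residue step at \(G\) is performed last among steps at \(G\). I would then compare \(\Phi\) across the whole block using the de Rham complex of a perturbed connection, twisting the residue by a small rational shift coming from a neighbouring component.

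The main obstacle is precisely this zero-residue block. There the induced differential \(\overline{\nabla}^1\) on \(G\) vanishes, so the diagram argument of Lemma~\ref{lem:nonzero} gives no surjection of cokernels. One needs a separate argument showing that any loss \(c_{\mathcal{O}}-c_{\mathcal{S}}>0\) at that step is compensated by a gain at another step. My first attempt would be to show \(c_{\mathcal{O}}(D,G)=0\) at zero-residue steps. To do this I would use the positivity \(([\mathcal{L}]-D)_G>0\) to find, via Giraud's minimality, a function in \(H^0(\mathcal{O}_V(-D))\) whose restriction generates \(H^0(G,\mathcal{O}_V(-D)|_G)\), which has dimension at most \(\val(G)-2\).

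Once monotonicity is established, chaining gives \(\Phi(E')\le\Phi([\mathcal{L}])=0\). By \eqref{eq:initial-Phi} this is exactly the inequality \(h^1(V,\mathcal{S}(-E'))\le h^1(V,\mathcal{L}(-E'))\) of Proposition~\ref{prop:cohomological-comparison}.
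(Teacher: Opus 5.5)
Your reduction matches the paper's: Lemma~\ref{lem:increment}, \eqref{eq:initial-Phi}, Lemma~\ref{lem:terminal} and Lemma~\ref{lem:nonzero} leave only the zero-residue steps. But those steps are where all the work is, and your proposal does not deal with them. It names the obstacle and lists strategies, none of which is carried out, and one of which points the wrong way.

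Because $H^0(G,\mathcal{O}_V(-D)|_G)\cong H^0(G,\mathcal{S}(-D)|_G)$, you have $c_{\mathcal{S}}(D,G)-c_{\mathcal{O}}(D,G)=\dim I_{\mathcal{O}}-\dim I_{\mathcal{S}}$, where $I_{\mathcal{P}}$ is the image of $H^0(\mathcal{P}(-D))$ in the sections over $G$. Your lifting idea, lifting every section of $\mathcal{O}_V(-D)$ to a section of $\mathcal{S}(-D)$ with the same restriction (e.g.\ by multiplying an Euler-type field), gives $I_{\mathcal{O}}\subseteq I_{\mathcal{S}}$. Hence $c_{\mathcal{S}}\le c_{\mathcal{O}}$, the reverse of what you need. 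You must instead control the restrictions of $\mathcal{S}$-sections by those of $\mathcal{O}$-sections, as $\nabla_j^1$ does in Lemma~\ref{lem:nonzero}.

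Your other two routes also fail as stated.
\begin{itemize}
\item The claim $c_{\mathcal{O}}(D,G)=0$ ``via Giraud's minimality'' has no mechanism behind it. Minimality of $[\mathcal{L}]$ is a numerical statement and produces no sections of $\mathcal{O}_V(-D)$. Surjectivity onto $H^0(G,\mathcal{O}_V(-D)|_G)$ would follow from $H^1(V,\mathcal{O}_V(-D-G))=0$, but $D+G$ is not antinef in general, and you give no vanishing theorem at these cycles.
\item The perturbed-connection idea cannot work. The residues of an integrable logarithmic connection on a line bundle near $E$ are forced by its Chern class, via $\deg\mathcal{P}|_{E_j}=-\sum_i r_iE_iE_j$ and negative definiteness. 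So the residue along $G$ cannot be moved off zero without changing the bundle.
\end{itemize}

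The paper handles the zero-residue steps as one block. It builds the sequence so that these steps are consecutive, each $E_i$ with $i\in\mathcal{Z}$ is added exactly once, and all residues off $\mathcal{Z}$ are positive at $D^-$. It then compares $\Phi(D^-)$ and $\Phi(D^+)$ directly, using three ingredients that your plan lacks.
\begin{enumerate}
\item Lemma~\ref{lem:zero-vanishing} gives $H^1(V,\mathcal{O}_V(-D^+))=0$. The proof uses the identity $D^+=\lfloor Z_K\rfloor+P_0$, Fujino's relative vanishing for the klt pair $(V,Z_K-\lfloor Z_K\rfloor)$, and a Laufer-type sequence. This makes $H^0(\mathcal{O}_V(-D^-))\to H^0(Q_{\mathcal{O}})$ surjective.
\item Lemma~\ref{lem:zero-injection} gives an injection $H^0(Q_{\mathcal{S}})\hookrightarrow H^0(Q_{\mathcal{O}})$. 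Tangential parts vanish by explicit local forms on each component, matching conditions at the nodes, and a linear system whose matrix is a negative definite principal submatrix of the intersection form. The conormal action then injects what remains. This is needed because the subbundles in \eqref{eq:component-split} do not glue across nodes of $\mathcal{R}$: the normal Euler directions $x\partial_x$ and $y\partial_y$ of the two branches differ there. So the single-component identification of sections does not carry over to $\mathcal{R}$.
\item Riemann--Roch on $\mathcal{R}$, with $\det Q_{\mathcal{S}}\simeq Q_{\mathcal{L}}\otimes Q_{\mathcal{O}}$ and $H^0(Q_{\mathcal{L}})=0$, yields $\Phi(D^+)-\Phi(D^-)=\dim I_{\mathcal{O}}-\dim I_{\mathcal{S}}\ge 0$.
\end{enumerate}
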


\subsection{Construction of the zero-residue divisor}

For a rational cycle \(x\), write \(x=x_+-x_-\) for its positive and negative parts. Write
\[
c_1(\mathcal{L}(-E'))=\sum_i b_iE_i,\qquad
\mathcal{Z}=\{i:b_i\in\mathbb{Z}_{\ge0}\},\qquad
C=\lceil c_1(\mathcal{L}(-E'))\rceil_+,
\]
where the ceiling is taken coefficientwise. If \(D=E'+J\), the residue identity is
\[
 \rho_{E_i}(E'+J)=J_i-b_i.
\]
By \eqref{eq:translation} and Lemma~\ref{lem:terminal-positivity},
\begin{equation}\label{eq:W-strict}
 ([\mathcal{L}]-E')_i-b_i>0\quad\text{for every }i.
\end{equation}
Since \([\mathcal{L}]-E'\) is integral and effective, \(C\le[\mathcal{L}]-E'\).

We now construct a computation sequence from \(0\) to \([\mathcal{L}]-E'\) in the following way. Recall that the addition of \(E_i\) is permitted precisely when
\[
 JE_i>c_1(\mathcal{L}(-E'))E_i
 \Longleftrightarrow
 (c_1(\mathcal{L}(-E'))-J)E_i<0.
\]
Suppose first that \(0 \le J<C\), and put \(y=c_1(\mathcal{L}(-E'))-J\). Then \(y_+ \ne 0\). If no index in \(\Supp(y_+)\) were permitted, one would have \(yE_i \ge 0\) on that support. Writing \(y=y_+-y_-\) with disjoint positive and negative parts, we obtain
\[
 y_+E_i=yE_i+y_-E_i \ge 0
\]
for \(i \in \Supp(y_+)\), and hence
\[
 y_+^2=\sum_i(y_+)_i(y_+E_i) \ge 0,
\]
contrary to negative definiteness. We may therefore add a component with \(y_i>0\). Since \(J_i<b_i\) and \(J_i\) is integral, \(J+E_i\le C\); this step has residue \(J_i-b_i=-y_i<0\). Iteration reaches \(C\), with every intermediate cycle bounded by \(C \le [\mathcal{L}]-E'\).

At \(J=C\), the zero-residue components are exactly the \(E_i\) with \(i \in \mathcal{Z}\). If \(C=[\mathcal{L}]-E'\), then \eqref{eq:W-strict} implies \(\mathcal{Z}=\emptyset\), since \(C_i=b_i\) for \(i \in \mathcal{Z}\). In this case the sequence is complete, with no zero- or positive-residue steps.

It remains to describe the construction when \(C<[\mathcal{L}]-E'\). For every \(i \in \mathcal{Z}\), \eqref{eq:W-strict} and \(b_i \in \mathbb{Z}\) imply
\begin{equation}\label{eq:zero-coefficient-bound}
 ([\mathcal{L}]-E')_i \ge C_i+1,
\end{equation}
so every zero-residue component \(E_i\) may be added once without
passing \([\mathcal{L}]-E'\). Moreover, every connected component of
the induced subgraph on \(\mathcal{Z}\) has an edge to its
complement. Otherwise a component without a boundary edge would, by
connectedness of the resolution graph, contain every vertex. This
would imply that \(c_1(\mathcal{L}(-E'))\) is integral and effective,
which in turn implies that the graph is numerically Gorenstein, thus
ADE, contradicting the assumption that the graph has at least two
nodes, as in the proof of Lemma~\ref{lem:terminal-positivity}.

Put \(x=C-c_1(\mathcal{L}(-E'))\). Its coefficients vanish on \(\mathcal{Z}\) and are positive off \(\mathcal{Z}\). At a boundary vertex \(i\) of a connected component of the subgraph on \(\mathcal{Z}\) one has
\[
 xE_i=\sum_{j \notin \mathcal{Z}}x_jE_jE_i>0,
\]
so \(E_i\) may be added. Root the connected component at \(i\). After \(E_i\) has been added, an adjacent vertex \(j\) that has not yet been treated has coefficient zero in the current difference from \(c_1(\mathcal{L}(-E'))\), while the coefficient at an already treated neighbor is one; all other coefficients are nonnegative. The intersection of \(E_j\) with that difference is therefore positive, so \(E_j\) may be added. Proceeding away from the root adds \(E_j\) once for every vertex \(j\) of the connected component. By \eqref{eq:zero-coefficient-bound}, the resulting cycle
\[
 J^+=C+\sum_{i \in \mathcal{Z}}E_i
\]
still satisfies \(J^+ \le [\mathcal{L}]-E'\).

All residues at \(J^+\) are positive. If \(J^+<[\mathcal{L}]-E'\), Giraud's construction applied to \(\mathcal{P}=\mathcal{L}(-E')\) completes the sequence up to \([\mathcal{L}]-E'\). The residues \(J_i-b_i\) remain positive as coefficients increase. The sequence therefore has the following order:

\begin{enumerate}
\item additions from \(0\) to \(C\), each with negative residue; \item one addition of \(E_i\) for every \(i \in \mathcal{Z}\), each with zero residue; \item the remaining additions up to \([\mathcal{L}]-E'\), each with positive residue.
\end{enumerate}

After translating every cycle \(J\) by \(E'\), we henceforth take the computation sequence \eqref{eq:sequence} to be the ordered sequence just constructed from \(E'\) to \([\mathcal{L}]\).

Put
\[
 D^- := E'+C,\quad
 \mathcal{R}=\sum_{i \in \mathcal{Z}}E_i,\quad
 D^+ := D^-+\mathcal{R}.
\]

The cycles \(D^-\) and \(D^+\) occur immediately before and after the zero-residue steps, respectively. The following vanishing will be used in the next subsection.

\begin{lemma}
\label{lem:zero-vanishing}
For the divisor \(D^+\) defined above, we have
\[
 H^1(V,\mathcal{O}_V(-D^+))=0.
\]
\end{lemma}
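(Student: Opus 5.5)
The plan is to derive the vanishing from a relative Kawamata--Viehweg (Grauert--Riemenschneider type) vanishing theorem for the proper birational map \(f\colon V\to X\), with the Stein base \(X\). In outline: write \(\mathcal{O}_V(-D^+)\) as \(\mathcal{O}_V(K_V+A)\) for an integral divisor \(A\), and show that \(A\) is the round-up of an \(f\)-nef \(\mathbb{Q}\)-divisor whose fractional part is supported on the simple normal crossing divisor \(E\).

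\emph{Step 1.} Put \(A:=-K_V-D^+\), so that \(\mathcal{O}_V(-D^+)=\mathcal{O}_V(K_V+A)\). Numerically, on exceptional curves, \(-K_V\equiv c_1(\mathcal{L})+E\). Together with \(D^+=E'+C+\mathcal{R}\) this gives
\[
 A\equiv c_1(\mathcal{L}(-E'))-C-\mathcal{R}+E = E-\mathcal{R}-x = \sum_{i\notin\mathcal{Z}}(1-x_i)E_i=:F,
\]
where \(x=C-c_1(\mathcal{L}(-E'))\) is the cycle used in the construction of the zero-residue divisor. Recall that the coefficients of \(x\) vanish exactly on \(\mathcal{Z}\) and are positive off \(\mathcal{Z}\).

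\emph{Step 2.} Set \(M:=A-F-Y\), where \(Y\) is an auxiliary effective rational exceptional cycle with \(YE_j\le0\) for all \(j\). Then \(M\equiv -Y\), and \(-Y\) is \(f\)-nef. The relative Kawamata--Viehweg theorem gives \(H^1(V,K_V+\lceil M\rceil)=0\). It therefore suffices to choose \(Y\) so that every coefficient \(F_i+Y_i\) lies in \([0,1)\), since then \(\lceil M\rceil=A\).

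If every \(b_i\ge0\), take \(Y=0\). For \(i\notin\mathcal{Z}\) one then has \(x_i=\lceil b_i\rceil-b_i\in(0,1)\), so \(F_i\in(0,1)\), while \(F_i=0\) on \(\mathcal{Z}\). The fractional part is supported on \(E\), so the lemma follows at once.

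\emph{Step 3 (main obstacle).} The difficulty is the components with \(b_i<0\). There \(C_i=0\) and \(x_i=-b_i\), which may exceed \(1\), so \(F_i=1+b_i\) can be negative. Two routes suggest themselves.

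\begin{itemize}
\item \emph{Absorbing the negative coefficients.} Let \(Y\) be the smallest \(f\)-antinef cycle that dominates \(-F_-\), i.e.\ the Laufer-type minimal cycle above \(-F_-\), and check that adding it does not push any coefficient to \(1\) or beyond. The bound should come from \(c_1(\mathcal{L}(-E'))\cdot E_i=2-\val(E_i)-E'E_i\), which lets one compare \(F\) with the antinef cone.
\item \emph{Induction along the sequence.} If the first route fails, argue inductively along the negative-residue and zero-residue part of the sequence. Start from \(H^1(V,\mathcal{O}_V(-E'))=0\), which is \eqref{eq:initial-O}. At each step \(D\mapsto D+G\), use
\[
0\to\mathcal{O}_V(-D-G)\to\mathcal{O}_V(-D)\to\mathcal{O}_G(-D)\to0,
\]
and show \(c_{\mathcal{O}}(D,G)=0\) by applying Kawamata--Viehweg to the intermediate divisor \(D+G\) instead.
\end{itemize}

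I would try the first route first, and I expect verifying the bound \(F_i+Y_i<1\) there to be the hardest part of the argument.
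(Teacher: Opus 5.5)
Your Steps 1 and 2 are correct, and they pinpoint the difficulty. With \(Z_K=\sum z_iE_i\), and using \(b_i=z_i-1\) on leaves and \(b_i=z_i-2\) elsewhere, one gets \(F=\{Z_K\}-P_0\). Here \(P_0\) is the sum of the non-leaf components with \(z_i<1\), so \(D^+=\lfloor Z_K\rfloor+P_0\). When \(P_0=0\), your choice \(Y=0\) is exactly the paper's vanishing step.

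The gap is Step 3: Route 1 cannot be completed in general. Write \(E_q^*\) for the cycle with \(E_q^*E_j=-\delta_{qj}\). If \(Y\) is antinef with \(Y_q\ge c\), then \(Y\ge cE_q^*/(E_q^*)_q\); this follows from negative definiteness applied to the negative part of the difference. So the minimal candidate is forced. Non-exceptional fractional parts do not help, since they only lower \(M\cdot E_i\).

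Now take a chain \(E_1-\cdots-E_7\) of \((-2)\)-curves, a \((-20)\)-curve \(E_8\) meeting \(E_5\), and \((-2)\)-curves \(E_9,E_{10}\) meeting \(E_8\).
\begin{itemize}
\item This tree is negative definite, with nodes \(E_5\) and \(E_8\).
\item It is rational: Laufer's algorithm gives the fundamental cycle \(E_1+2E_2+3E_3+3E_4+3E_5+2E_6+E_7+E_8+E_9+E_{10}\), with \(p_a=0\).
\item In the basis \(E_1,\dots,E_{10}\), \(Z_K=18E_8^*=\frac{1}{137}(54,108,162,216,270,180,90,144,72,72)\).
\item Hence all \(b_i<0\), \(D^+=E'\), \(P_0=E_2\), \(F_2=-\frac{29}{137}\) and \(F_5=\frac{133}{137}\).
\item Any admissible \(Y\) needs \(Y_2\ge\frac{29}{137}\), so \(Y\ge\frac{29}{210}E_2^*\), where \(E_2^*=\frac{1}{137}(105,210,178,146,114,76,38,6,3,3)\).
\item This gives \(Y_5\ge\frac{551}{4795}>\frac{4}{137}=1-F_5\), so \((F+Y)_5>1\) for every choice.
\end{itemize}
The lemma itself is true in this example (it reduces to \eqref{eq:initial-O}). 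But no nef divisor absorbs the negative coefficient, so the mechanism has to be different.

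Route 2 is circular as stated. If \(H^1(\mathcal{O}_V(-D))=0\), the restriction sequence gives \(c_{\mathcal{O}}(D,G)=h^1(\mathcal{O}_V(-D-G))\). So ``Kawamata--Viehweg for \(D+G\)'' is Route 1 again, and the last step is Route 1 for \(D^+\) itself. An induction only works if you add components with \(\deg\mathcal{O}_G(-D)<0\), where nothing needs proof.

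That is the idea you are missing, and it is how the paper argues. Apply vanishing only to \(Z_0=\lfloor Z_K\rfloor\); there \(-Z_0-K_V\equiv\{Z_K\}\), so the nef part is numerically trivial. Then add the components of \(P_0\) one at a time. Discrete concavity, \(\sum_{j\sim i}z_j\le 2z_i\) for \(z_i<1\), gives three facts:
\begin{itemize}
\item \(\Sigma=\Supp Z_0\) is connected;
\item each connected component of \(\Supp P_0\) meets \(\Sigma\) along exactly one edge, and the \(\Sigma\)-end of that edge has coefficient \(1\) in \(Z_0\);
\item the other outside neighbours of that component are leaves with coefficient \(0\).
\end{itemize}
Adding components outward from that edge gives \(ZE_i=1\) at each step. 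So \(\mathcal{O}_{E_i}(-Z)\cong\mathcal{O}_{\mathbb{P}^1}(-1)\) has no cohomology, and \(H^1\) stays zero. If \(\Sigma=\emptyset\), then \(P_0=E'\) and \eqref{eq:initial-O} applies directly.
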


\begin{proof}
Let \(Z_K=\sum z_iE_i\) denote the numerical anticanonical cycle as before, with \(z_i>0\); see Némethi~\cite[Corollary~2.1.19]{Nemethi2022}. Put
\[
 Z_0 :=\lfloor Z_K\rfloor,\quad
 P_0 :=\sum_{\substack{\text{\(E_i\) is not a leaf}\\0<z_i<1}}E_i.
\]
The relation \(c_1(\mathcal{L}(-E'))=Z_K-E-E'\) gives
\begin{equation}\label{eq:Dplus}
 D^+=Z_0+P_0.
\end{equation}
This follows coefficientwise from \(b_i=z_i-1\) on leaves
and \(b_i=z_i-2\) on the other components.

Write \(Z_K=Z_0+F_0\), with all coefficients of \(F_0\) in \([0,1)\). Apply Fujino's analytic relative vanishing theorem for surfaces (\cite[Theorem~11.3(2)]{Fujino2021}) to the smooth klt pair \((V,F_0)\), the line bundle \(\mathcal{O}_V(-Z_0)\), the auxiliary divisor \(D=0\), and the resolution map \(f:V \to X\). The numerical condition holds because
\[
 (-Z_0)E_i+(D-(K_V+F_0))E_i =-(K_V+Z_K)E_i=0
\]
for every exceptional component. Thus
\[
 R^1f_*\mathcal{O}_V(-Z_0)=0,\quad
 H^1(V,\mathcal{O}_V(-Z_0))=0.
\]

It remains to add \(P_0\). Set
\[
 \Sigma=\bigcup_{z_i \ge 1}E_i=\Supp Z_0.
\]
The anticanonical equations are
\begin{equation}\label{eq:concavity}
 \sum_{j \sim i}z_j=2-d_i(1-z_i).
\end{equation}
For \(z_i<1\), they imply
\[
 \sum_{j \sim i}z_j \le 2z_i<2.
\]
Along a path, this inequality expresses discrete concavity. Hence \(\Sigma\) is connected (possibly empty): a path between two distinct components would have endpoint values at least one and internal values below one, contrary to concavity.

Assume first that \(\Sigma \ne \emptyset\). Every connected component of the subgraph determined by \(\Supp P_0\) has a unique edge to \(\Sigma\). Uniqueness follows because the graph is a tree. To see existence when \(\Sigma\) contains a component that is not a leaf, join a vertex in \(\Supp P_0\) to a vertex of \(\Sigma\) that is not a leaf. The corresponding path in the dual graph lies in the subgraph determined by \(E'\). The first vertex of this path outside the starting component has a positive coefficient, is not a leaf, and does not lie in \(\Supp P_0\); hence it lies in \(\Sigma\). If \(\Sigma\) contains only leaves, its connectedness forces it to consist of a single leaf. Then every vertex that is not a leaf has coefficient in \((0,1)\), so \(P_0=E'\), which meets that leaf.

Let \(q\) be a vertex in \(\Supp P_0\) adjacent to \(\Sigma\). Since \(z_q<1\), the inequality following from \eqref{eq:concavity} shows that the coefficient of the adjacent vertex in \(\Sigma\) is less than two. It is at least one, so its coefficient in \(Z_0\) is exactly one. Every other boundary neighbor of that connected component of \(\Supp P_0\) is a leaf with coefficient below one and hence has coefficient zero in \(Z_0\).

In each connected component of \(\Supp P_0\), first add the vertex adjacent to \(\Sigma\), and then add the remaining vertices in order away from \(\Sigma\). For the first vertex, the only nonzero adjacent contribution is the coefficient of that adjacent component; for every later vertex it is the coefficient of the previously added predecessor. Immediately before \(E_i\) is added, the current cycle \(Z\) therefore satisfies \(ZE_i=1\). Hence the quotient in
\[
 0 \to \mathcal{O}_V(-Z-E_i) \to \mathcal{O}_V(-Z) \to \mathcal{O}_{E_i}(-Z) \to 0
\]
is \(\mathcal{O}_{\mathbb{P}^1}(-1)\); its \(H^0\) and \(H^1\) vanish, so \(H^1\) is unchanged. If \(\Sigma\) is empty, then \(Z_0=0\) and \(P_0=E'\), and the result follows from \eqref{eq:initial-O}. Equation~\eqref{eq:Dplus} proves the lemma.
\end{proof}

\subsection{Cohomology on the zero-residue divisor}
\label{subsec:zero-residue-cohomology}

The goal of this subsection is the following proposition, whose proof is given after Lemma~\ref{lem:zero-injection}.

\begin{proposition}
\label{prop:zero-comparison}
For the cycles \(D^-\) and \(D^+\) defined above, we have
\begin{equation*}\label{eq:zero-increment}
 \Phi(D^+)-\Phi(D^-) \ge 0.
\end{equation*}
\end{proposition}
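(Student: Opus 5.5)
The plan is to telescope over the zero-residue block. Order the zero-residue steps as in the construction above, $D^-=D_{k}<D_{k+1}<\cdots<D_{k+|\mathcal{Z}|}=D^+$, with $G_j=E_{i_j}$ for $i_j\in\mathcal{Z}$ added in rooted order inside each connected component of the subgraph on $\mathcal{Z}$. By Lemma~\ref{lem:increment},
\[
 \Phi(D^+)-\Phi(D^-)=\sum_{j=k}^{k+|\mathcal{Z}|-1}\bigl(c_{\mathcal{S}}(D_j,G_j)-c_{\mathcal{O}}(D_j,G_j)\bigr).
\]
Since $c_{\mathcal{S}}\ge 0$, it suffices to show that $c_{\mathcal{O}}(D_j,G_j)=0$ at every zero-residue step. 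The residue argument of Lemma~\ref{lem:nonzero} is unavailable here, because the induced map $\overline{\nabla}{}^1_j$ on $G_j$ is multiplication by $0$. I therefore intend to replace it by a cohomological vanishing that propagates backwards from $D^+$.

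The key identity comes from the restriction sequence $0\to\mathcal{O}_V(-D_{j+1})\to\mathcal{O}_V(-D_j)\to\mathcal{O}_{G_j}(-D_j)\to 0$ together with \eqref{eq:coherent-H2-zero}. It gives
\[
 c_{\mathcal{O}}(D_j,G_j)=\dim\ker\bigl[H^1(\mathcal{O}_V(-D_{j+1}))\to H^1(\mathcal{O}_V(-D_j))\bigr],
\]
and the map $H^1(\mathcal{O}_V(-D_{j+1}))\to H^1(\mathcal{O}_V(-D_j))$ is surjective whenever $h^1(G_j,\mathcal{O}_{G_j}(-D_j))=0$, that is, whenever $D_jG_j\le 1$. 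Starting from Lemma~\ref{lem:zero-vanishing}, which gives $H^1(\mathcal{O}_V(-D^+))=0$, I would argue by descending induction on $j$. If $H^1(\mathcal{O}_V(-D_{j+1}))=0$, then $c_{\mathcal{O}}(D_j,G_j)=0$ automatically. If moreover $D_jG_j\le1$, then also $H^1(\mathcal{O}_V(-D_j))=0$, and the induction continues.

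The main obstacle is the bound $D_jG_j\le 1$ at each zero-residue step. The approach is to compute $D_jG_j$ using $D_j=E'+J$ with $J=C+(\text{already added }\mathcal{Z}\text{-vertices})$, together with $c_1(\mathcal{L}(-E'))G_j=b\cdot G_j$. Since $C_i=b_i$ for $i\in\mathcal{Z}$, the difference $x=C-c_1(\mathcal{L}(-E'))$ is supported off $\mathcal{Z}$. Then
\[
 D_jG_j=c_1(\mathcal{L})G_j+xG_j+(\text{predecessor term}),
\]
where $c_1(\mathcal{L})G_j=2-\val(G_j)$ and the predecessor term is $0$ or $1$. So the needed bound amounts to estimating $xG_j$ in terms of $\val(G_j)$, using that the fractional parts $x_l\in(0,1)$ are small. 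The identity \eqref{eq:Dplus}, $D^+=Z_0+P_0$, should make this explicit, since the proof of Lemma~\ref{lem:zero-vanishing} already computed $ZE_i=1$ along a rooted order. I would first try to show that our rooted order on $\mathcal{Z}$ can be matched with that order.

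If this bound fails at some step, the fallback is an injection lemma: show that $\coker r_{\mathcal{O},j}$ injects into $\coker r_{\mathcal{S},j}$ through the inclusion $\mathcal{O}_V(-D_j)|_{G_j}\subset\mathcal{S}(-D_j)|_{G_j}$, which is an isomorphism on $H^0$. The needed input is that a global section of $\mathcal{S}(-D_j)$ whose restriction lies in the image of global functions can be corrected by an element of $\nabla_j\bigl(H^0(\mathcal{L}(-D_j))\bigr)$. This correction would use the zero residue, because $\nabla_j$ then preserves the filtration along $G_j$. It would give $c_{\mathcal{S}}\ge c_{\mathcal{O}}$ stepwise, which also suffices.
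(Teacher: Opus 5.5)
There is a genuine gap: the claim that \(c_{\mathcal{O}}(D_j,G_j)=0\) at every zero-residue step is false in general, so the descending induction cannot be completed. Your estimate does work at non-leaf vertices of \(\mathcal{Z}\). A non-\(\mathcal{Z}\) neighbour \(l\) of such a vertex has \(x_l\in(0,1)\), which gives \(D_jG_j\le 1\). It fails at leaves, however.

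If \(E_\ell\) is a leaf with \(\ell\in\mathcal{Z}\), then \(z_\ell\in\mathbb{Z}_{\ge1}\). The anticanonical equation gives \(z_n=2+d_\ell(z_\ell-1)\in\mathbb{Z}_{\ge2}\) for its neighbour \(n\), so \(n\in\mathcal{Z}\) as well. Hence \(D^-E_\ell=1=c_1(\mathcal{L})E_\ell\), so \(E_\ell\) cannot be added before its parent. When it is added, \(D_jE_\ell=1+1=2\), so \(h^1(\mathcal{O}_{E_\ell}(-D_j))=1\).

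This occurs for rational graphs with two nodes. Take a \((-2)\)-node \(E_{n_1}\) carrying a \((-3)\)-leaf \(E_\ell\) and a chain of three \((-2)\)-curves, joined to a \((-3)\)-node \(E_{n_2}\) carrying two \((-2)\)-leaves. Laufer's algorithm (every step has \(ZE_i=1\)) shows that this graph is rational. The coefficients of \(Z_K\) are \(1,2\) on \(E_\ell,E_{n_1}\), then \(\tfrac32,1,\tfrac12\) along the chain, \(\tfrac32\) on \(E_{n_2}\), and \(\tfrac34\) on its leaves. Hence \(\mathcal{Z}=\{\ell,n_1\}\), \(C=0\), and \(D^-=E'\). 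The block is forced to be \(E'\to E'+E_{n_1}\to E'+E_{n_1}+E_\ell=D^+\).

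Lemma~\ref{lem:zero-vanishing} and \eqref{eq:coherent-H2-zero} give \(h^1(\mathcal{O}_V(-E'-E_{n_1}))=h^1(\mathcal{O}_{E_\ell}(-2))=1\). On the other hand, \(H^1(\mathcal{O}_V(-E'))=0\) by \eqref{eq:initial-O}. Therefore \(c_{\mathcal{O}}(E',E_{n_1})=1\). Any proof must therefore extract a compensating positive contribution from \(c_{\mathcal{S}}\); here \(c_{\mathcal{S}}(E',E_{n_1})=1\).

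Your fallback does not supply this contribution. At a computation step \(H^0(\mathcal{L}(-D_j)|_{G_j})=0\), so \(H^0(\mathcal{L}(-D_j))=H^0(\mathcal{L}(-D_j-G_j))\). If \(g\) is a local equation of \(G_j\), then \(\nabla_j(g\tau)=g\,(\tfrac{dg}{g}\otimes\tau+\nabla_j\tau)\), so \(\nabla_j\) maps \(\mathcal{L}(-D_j-G_j)\) into \(\mathcal{S}(-D_j-G_j)\). Every correction term from \(\nabla_j(H^0(\mathcal{L}(-D_j)))\) therefore restricts to zero on \(G_j\) and cannot affect either cokernel. Meanwhile, with zero residue \(\overline{\nabla}{}^1_j\) kills \(\mathcal{O}_V(-D_j)|_{G_j}\), so nothing in your sketch turns a logarithmic vector field into a function.

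The missing idea is to treat the zero-residue block as a whole rather than stepwise.
\begin{itemize}
\item Restrict to the entire divisor \(\mathcal{R}\) at once. Use \(H^0(Q_{\mathcal{L}})=0\), Riemann--Roch and \(\det Q_{\mathcal{S}}\simeq Q_{\mathcal{L}}\otimes Q_{\mathcal{O}}\) to obtain \(\Phi(D^+)-\Phi(D^-)=\dim I_{\mathcal{O}}-\dim I_{\mathcal{S}}\).
\item Lemma~\ref{lem:zero-vanishing} gives \(\dim I_{\mathcal{O}}=h^0(Q_{\mathcal{O}})\).
\item Then \(\dim I_{\mathcal{S}}\le h^0(Q_{\mathcal{S}})\le h^0(Q_{\mathcal{O}})\) by Lemma~\ref{lem:zero-injection}.
\end{itemize}
The real work lies in that injection. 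The tangential parts of all sections of \(Q_{\mathcal{S}}\) vanish, by the matching conditions at the double points of \(\mathcal{R}\) together with negative definiteness on the vertices with \(\beta_i=0\) (Lemma~\ref{lem:zero-tangential-vanishing}). The conormal action (Lemma~\ref{lem:conormal-injection}) then embeds these sections into \(H^0(Q_{\mathcal{O}})\). Your argument does go through when \(\mathcal{Z}\) contains no leaf, but not in general.
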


We first compare the spaces of sections on \(\mathcal{R}\). For \(\mathcal{P} \in \{\mathcal{O}_V,\mathcal{L},\mathcal{S}\}\), set
\[
 Q_{\mathcal{P}} :=\mathcal{P}(-D^-)|_{\mathcal{R}},\quad
 I_{\mathcal{P}} :=
 \im[H^0(\mathcal{P}(-D^-)) \to H^0(Q_{\mathcal{P}})].
\]

For each connected component \(R_\alpha\) of \(\mathcal{R}\), let \(I_\alpha=\{i:E_i \subset R_\alpha\}\). For \(i \in I_\alpha\), put
\[
 \beta_i=D^-E_i-c_1(\mathcal{L})E_i,\quad
 M_i=\mathcal{O}_V(-D^-)|_{E_i}.
\]
With \(c_1(\mathcal{L})=\sum_j a_jE_j\) as above, all residues on \(R_\alpha\) vanish, and hence
\begin{equation}\label{eq:beta-boundary}
 \beta_i
 =\sum_{\substack{j \notin I_\alpha\\j \sim i}}
 ((D^-)_j-a_j)
 =\sum_{\substack{j \notin I_\alpha\\j \sim i}}
 \rho_{E_j}(D^-).
\end{equation}
Every residue in this boundary sum is positive. Thus \(\beta_i \ge 0\), with strict inequality exactly when \(E_i\) meets \(E-R_\alpha\). Restriction of logarithmic vector fields to \(E_i\), followed by tensoring with \(M_i\), gives the exact sequence
\begin{equation}\label{eq:component-split}
 0 \to M_i \to Q_{\mathcal{S}}|_{E_i} \to \mathcal{L}(-D^-)|_{E_i} \to 0.
\end{equation}
The two line bundles in this sequence have degrees
\[
 \deg M_i=t_i-2-\beta_i,\quad
 \deg\mathcal{L}(-D^-)|_{E_i}=-\beta_i.
\]

The following lemma describes sections on a single component. Recall that \(\Sigma_i\) denotes the reduced divisor of intersection points \(E_i \cap \overline{E-E_i}\).

\begin{lemma}
\label{lem:zero-component-sections}
Let \(i \in I_\alpha\). If \(\beta_i>0\), every global section of \(Q_{\mathcal{S}}|_{E_i}\) has zero image in \(H^0(E_i,\mathcal{L}(-D^-)|_{E_i})\). If \(\beta_i=0\), choose an affine coordinate \(z_i\) on \(E_i\) such that the chart contains \(\Sigma_i\), and put
\[
 a_{ij}=z_i(E_i \cap E_j),\quad
 P_i(z_i)=\prod_{j \sim i}(z_i-a_{ij}).
\]
One can choose a normal coordinate \(u_i\) to first order along this chart and a local generator \(\eta_i\) of \(M_i\) such that every global section of \(Q_{\mathcal{S}}|_{E_i}\) is represented by
\[
 \eta_i\left(B_i(z_i)u_i\partial_{u_i}+\tau_iP_i(z_i)\partial_{z_i}\right),
\]
where \(\tau_i \in \mathbb{C}\) and \(B_i\) is a polynomial satisfying
\[
 \deg B_i \le t_i-1,\quad
 [z_i^{t_i-1}]B_i=-d_i\tau_i.
\]
The coordinates can be chosen so that every adjacent branch is a fiber over its intersection point to first order.
\end{lemma}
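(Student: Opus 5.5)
The plan is to analyze the exact sequence \eqref{eq:component-split} on the single rational curve \(E_i\cong\mathbb{P}^1\) via degrees, and then to write its sections explicitly in local coordinates.

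\textbf{The case \(\beta_i>0\).} The quotient \(\mathcal{L}(-D^-)|_{E_i}\) has degree \(-\beta_i<0\), so it has no nonzero global sections. Hence the image of every global section of \(Q_{\mathcal{S}}|_{E_i}\) in it vanishes.

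\textbf{The case \(\beta_i=0\).} Now the quotient \(\mathcal{L}(-D^-)|_{E_i}\) is trivial, and \(M_i\) has degree \(t_i-2\).

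1. Since \(M_i\) has degree \(t_i-2\ge 0\), \(H^1(M_i)=0\). Therefore
\[
H^0(Q_{\mathcal{S}}|_{E_i})\to H^0(\mathcal{L}(-D^-)|_{E_i})\cong\mathbb{C}
\]
is surjective, and \(h^0(Q_{\mathcal{S}}|_{E_i})=(t_i-1)+1=t_i\).

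2. Choose \(z_i\) so that \(\infty\notin\Sigma_i\). Choose a tubular coordinate \(u_i\), that is, a first-order normal coordinate, so that near each \(a_{ij}\) the branch \(E_j\) is \(\{z_i=a_{ij}\}\) to first order. This is possible because normal directions can be adjusted to first order: the normal bundle is \(\mathcal{O}(-d_i)\), and we only need to fix finitely many transverse directions.

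3. On the chart, \(\mathcal{S}|_{E_i}\) is generated by \(u_i\partial_{u_i}\) and \(P_i(z_i)\partial_{z_i}\). Indeed, \(\partial_{z_i}\) must be tangent to each \(E_j\), so at \(a_{ij}\) it acquires the factor \((z_i-a_{ij})\). Trivialize \(M_i\) by \(\eta_i\). Since \(D^-\) has zero residue on \(R_\alpha\), the factor \(\eta_i\) is the twist by \(\mathcal{O}(-D^-)\), chosen so that the quotient \(\mathcal{L}(-D^-)|_{E_i}\) is trivialized by \(\eta_i P_i\partial_{z_i}\). Its global sections are then the constant multiples \(\tau_i\).

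4. A local section \(\eta_i(B_iu_i\partial_{u_i}+\tau_iP_i\partial_{z_i})\) extends over \(\infty\) exactly when the transition to the chart at infinity produces a regular expression. With \(w=1/z_i\) and \(u'=u_iz_i^{-d_i}\) (the normal bundle has degree \(-d_i\)), we have
\[
\partial_{z_i}=-w^2\partial_w+d_iz_i^{-1}u'\partial_{u'}.
\]
Hence \(P_i\partial_{z_i}\) contributes \(d_iz_i^{t_i-1}\,u'\partial_{u'}\) plus lower-order terms, while \(u_i\partial_{u_i}=u'\partial_{u'}\).

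5. Account for the degree of \(\eta_i\): it changes by \(z_i^{-(t_i-2)}\) up to the normalization fixing the quotient. Regularity then forces \(\deg B_i\le t_i-1\), with the top coefficient cancelling the pole coming from \(P_i\partial_{z_i}\). This gives \([z_i^{t_i-1}]B_i=-d_i\tau_i\).

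6. Count dimensions: the coefficients of \(B_i\) in degrees \(0,\dots,t_i-2\) together with \(\tau_i\) give \(t_i\) parameters. This matches \(h^0\) from step 1, so every global section has this form.

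The main obstacle is the bookkeeping at infinity in steps 4 and 5: determining the twist of \(\eta_i\) so that the quotient trivialization is global, and tracking the sign in the constraint \(-d_i\tau_i\). I would check this first on the case \(t_i=2\), \(D^-=0\) against \(\Theta\) of a tubular neighborhood of \(\mathcal{O}(-d_i)\).
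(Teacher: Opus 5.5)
Your proposal is correct and follows the paper's proof. For \(\beta_i>0\) you use the same degree argument. For \(\beta_i=0\) you use the same frame \(u_i\partial_{u_i}\), \(P_i\partial_{z_i}\) on the chart, a generator \(\eta_i\) with divisor \((t_i-2)\infty\), and regularity at infinity forcing \(\deg B_i\le t_i-1\) and \([z_i^{t_i-1}]B_i=-d_i\tau_i\). Your dimension count via \(H^1(M_i)=0\) is an equally valid substitute for the paper's direct remark that \(\eta_iP_i\partial_{z_i}\) generates the trivial quotient globally.

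Two slips need fixing.

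In step 4, \(u_i\) is a local equation of \(E_i\), so its class is a section of the conormal bundle \(N^\vee_{E_i/V}\cong\mathcal{O}_{\mathbb{P}^1}(d_i)\). The generator at infinity is therefore \(u'=z_i^{d_i}u_i\), the paper's \(v_i\). With the relation \(u'=u_iz_i^{-d_i}\) that you wrote, the chain rule gives \(\partial_{z_i}=-w^2\partial_w-d_iz_i^{-1}u'\partial_{u'}\), and hence \([z_i^{t_i-1}]B_i=+d_i\tau_i\). So your displayed formula, which is the correct one, does not follow from the transition you stated.

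In step 1, the case \(t_i=1\) is not excluded. If a leaf \(E_i\) lies in \(\mathcal{Z}\), the anticanonical equation gives \(b_j=d_ib_i\) for its neighbour, so the neighbour is also in \(\mathcal{Z}\) and \(\beta_i=0\). You only have \(\deg M_i\ge -1\), which still yields \(H^1(M_i)=0\) and \(h^0=t_i\).

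Finally, your step 2 is adequate on the affine chart once made precise: correct the first-order extension of \(z_i\) by \(u_ih(z_i)\), with \(h\) a polynomial taking prescribed values at the points \(a_{ij}\). This needs no cohomological input. The transition at infinity only involves \(N^\vee_{E_i/V}\), because the section of \(\mathcal{S}|_{E_i}\) defined by \(\partial_{z_i}\) does not depend on that first-order extension. The paper instead builds a global compatible retraction, using the vanishing of \(H^1(E_i,\mathcal{O}_{\mathbb{P}^1}(d_i+2-t_i))\), which comes from rationality.
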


\begin{proof}
If \(\beta_i>0\), the target line bundle in \eqref{eq:component-split} has negative degree. It therefore has no nonzero global section, which proves the first assertion.

Suppose that \(\beta_i=0\). We first choose a retraction of the first-order neighborhood of \(E_i\) onto \(E_i\) that is compatible with the adjacent branches. Such retractions exist locally in normal-crossing coordinates. Two local retractions differ by a derivation with values in the conormal bundle. Compatibility with the adjacent branches requires this derivation to vanish at \(\Sigma_i\). Consequently, the obstruction to gluing the local retractions lies in
\[
 H^1\left(E_i,\Theta_{E_i}(-\log{\Sigma_i})\otimes N_{E_i/V}^{\vee}\right);
\]
compare \cite[\S3.4]{Sernesi2006}. Since
\[
 \Theta_{E_i}(-\log{\Sigma_i})\otimes N_{E_i/V}^{\vee}
 \simeq \mathcal{O}_{\mathbb{P}^1}(d_i+2-t_i)
\]
and rationality gives \(d_i+2-t_i \ge 1\), this cohomology group vanishes. We may therefore choose such a retraction. In local coordinates with \(E_i=(u=0)\) and an adjacent branch given by \((z=0)\), its pullback of the base coordinate has the form
\[
 z+uh(z)\pmod{u^2},\quad h(0)=0.
\]
Thus the adjacent branch is a fiber over its marked point to first order.

Put \(w_i=z_i^{-1}\). Since \(N_{E_i/V}^{\vee} \simeq \mathcal{O}_{\mathbb{P}^1}(d_i)\) and \(M_i \simeq \mathcal{O}_{\mathbb{P}^1}(t_i-2)\), choose a section \(u_i\) of \(N_{E_i/V}^{\vee}\) with divisor \(d_i\infty\) and a meromorphic section \(\eta_i\) of \(M_i\) with divisor \((t_i-2)\infty\). These sections have neither zeros nor poles on the affine chart. Using the retraction, regard \(u_i\) as a normal coordinate to first order along that chart. Near infinity, the corresponding local generators \(v_i\) and \(\widetilde\eta_i\) satisfy
\[
 v_i=z_i^{d_i}u_i\pmod{u_i^2},\quad
 \eta_i=w_i^{t_i-2}\widetilde\eta_i.
\]
After restriction to \(E_i\), these transition formulas give
\[
 u_i\partial_{u_i}=v_i\partial_{v_i},\quad
 \partial_{z_i}=-w_i^2\partial_{w_i}+d_iw_iv_i\partial_{v_i}.
\]

The image of \(\eta_iP_i(z_i)\partial_{z_i}\) generates the tangential quotient globally: near infinity it is represented by
\[
-\widetilde{\eta}_i w_i^{t_i}P_i(w_i^{-1})\partial_{w_i},
\]
and \(w_i^{t_i}P_i(w_i^{-1})\) has value \(1\) at \(w_i=0\). Hence every section has the stated affine form, with \(\tau_i\in\mathbb{C}\).

The section is consequently represented near infinity by
\[
 \widetilde\eta_i\left(\{w_i^{t_i-2}B_i(w_i^{-1})+d_i\tau_iw_i^{t_i-1}P_i(w_i^{-1})\}v_i\partial_{v_i}-\tau_iw_i^{t_i}P_i(w_i^{-1})\partial_{w_i}\right).
\]
The tangential coefficient is regular. Since \(P_i\) is monic of degree \(t_i\), regularity of the normal coefficient implies that \(B_i\) is a polynomial of degree at most \(t_i-1\). The coefficient of \(w_i^{-1}\) must also vanish, which gives \([z_i^{t_i-1}]B_i=-d_i\tau_i\).
\end{proof}

We next use the matching conditions at the intersection points to show that the tangential parts vanish simultaneously on all components.

\begin{lemma}
\label{lem:zero-tangential-vanishing}
For every connected component \(R_\alpha\) of \(\mathcal{R}\), the natural map
\[
 H^0(R_\alpha,Q_{\mathcal{S}}|_{R_\alpha})
 \to \bigoplus_{i \in I_\alpha}H^0(E_i,\mathcal{L}(-D^-)|_{E_i})
\]
is zero.
\end{lemma}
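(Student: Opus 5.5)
The plan is to write a global section of \(Q_{\mathcal{S}}|_{R_\alpha}\) as a compatible family of sections on the components \(E_i\), \(i\in I_\alpha\), and to show that the tangential scalars vanish. For \(\beta_i>0\) Lemma~\ref{lem:zero-component-sections} already says the tangential image is zero, so we set \(\tau_i:=0\) for such \(i\). For \(\beta_i=0\) the lemma gives the normal form \(\eta_i(B_iu_i\partial_{u_i}+\tau_iP_i\partial_{z_i})\). It then suffices to prove \(\tau_i=0\) for all \(i\in I_\alpha\), because the image in \(H^0(E_i,\mathcal{L}(-D^-)|_{E_i})\) is exactly the tangential part \(\tau_i\eta_iP_i\partial_{z_i}\).

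\textbf{Step 1: matching at double points.} Let \(p=E_i\cap E_j\) with \(i,j\in I_\alpha\). Near \(p\), use the first-order coordinates of Lemma~\ref{lem:zero-component-sections}, in which \(E_j\) is the fiber \(z_i=a_{ij}\) and \(E_i\) is the fiber \(z_j=a_{ji}\). The fiber of \(\mathcal{S}\) at \(p\) is spanned by the two logarithmic Euler fields \(u_i\partial_{u_i}\) and \((z_i-a_{ij})\partial_{z_i}\). Up to units coming from the comparison of \(u_j\) with \(z_i-a_{ij}\), these correspond to \((z_j-a_{ji})\partial_{z_j}\) and \(u_j\partial_{u_j}\).

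\begin{itemize}
\item Restricted to \(E_i\), the first field is normal and the second is tangential. Along \(E_j\) the roles are swapped.
\item Equality of the two restrictions in the fiber at \(p\) therefore gives \(B_j(a_{ji})=c_{ij}\tau_iP_i'(a_{ij})\) and \(B_i(a_{ij})=c_{ji}\tau_jP_j'(a_{ji})\). Here \(c_{ij}\neq0\) records the ratio \(\eta_i/\eta_j\) at \(p\) and the first-order comparison of coordinates.
\item At a vertex with \(\beta_i>0\), the normal values \(B_i(a_{ij})\) are not needed. The only relevant fact there is that its tangential part vanishes, which forces \(B_j(a_{ji})=0\) for each neighbor \(j\) in \(I_\alpha\).
\end{itemize}

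\textbf{Step 2: Lagrange interpolation.} For \(\beta_i=0\) we have \(\deg B_i\le t_i-1\), and all \(t_i\) neighbors of \(E_i\) lie in \(R_\alpha\), since \(\beta_i=0\) exactly when \(E_i\) does not meet \(E-R_\alpha\). Hence \(B_i\) is determined by its values at the \(t_i\) roots of \(P_i\). Its top coefficient is \(\sum_{j\sim i}B_i(a_{ij})/P_i'(a_{ij})\). Combined with \([z_i^{t_i-1}]B_i=-d_i\tau_i\) and Step 1, this yields the linear relation
\[
 -d_i\tau_i=\sum_{j\sim i}c_{ji}\,\frac{P_j'(a_{ji})}{P_i'(a_{ij})}\,\tau_j ,
\]
in which the terms with \(\beta_j>0\) drop out because \(\tau_j=0\).

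\textbf{Step 3: normalization.} Define \(\tilde\tau_i=\lambda_i\tau_i\) with \(\lambda_i\neq0\). Because \(R_\alpha\) is a tree, the \(\lambda_i\) can be chosen recursively from a root so that every edge coefficient becomes \(1\) or, uniformly, \(-1\). The relations then read \(\bigl(\sum_{j}\tilde\tau_jE_j\bigr)\cdot E_i=0\) for every \(i\) with \(\beta_i=0\), with \(\tilde\tau_j=0\) off this set (in the sign-\(-1\) case one first replaces \(\tilde\tau_j\) by \((-1)^{\text{depth}(j)}\tilde\tau_j\)). So \(\tilde\tau\) lies in the kernel of the intersection matrix of the induced subgraph on \(\{i\in I_\alpha:\beta_i=0\}\). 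That matrix is negative definite, so \(\tilde\tau=0\) and therefore all \(\tau_i=0\).

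\textbf{Main obstacle.} I expect the hard part to be Step 1 together with the consistency check in Step 3: one must verify that \(c_{ij}c_{ji}\,P_i'(a_{ij})^{-1}P_j'(a_{ji})^{-1}\cdot P_i'(a_{ij})P_j'(a_{ji})\) has a fixed value (namely \(1\), up to a global sign) on every edge. This requires computing precisely how the chosen trivializations \(\eta_i,\eta_j\) of \(\mathcal{O}_V(-D^-)\) and the normal coordinates \(u_i,u_j\) compare at \(p\), using that each adjacent branch is a fiber to first order. On a tree, only this edgewise product has to be controlled, since the individual factors can be absorbed into the \(\lambda_i\). I would do this computation first in explicit normal-crossing coordinates \(u_i=z_j-a_{ji}\), \(u_j=z_i-a_{ij}\) up to units.
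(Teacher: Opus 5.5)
\textbf{Gap: the edge relation $c_{ij}c_{ji}=1$ is never established.} Your skeleton is the paper's: the normal form from Lemma~\ref{lem:zero-component-sections}, matching at double points, $B_j(a_{ji})=0$ next to a vertex with $\beta>0$, Lagrange interpolation of the top coefficient, tree rescaling, and negative definiteness. But the proposal stops at the one step where an idea is needed: you leave $c_{ij}c_{ji}=1$ as the ``main obstacle''. Without it the argument does not close. Rescaling on the forest $\{\beta_i=0\}$ cannot change the product of the two edge coefficients, here $c_{ij}c_{ji}$. Only when this product is $1$ does the system become $(E_iE_j)_{\beta_i=\beta_j=0}$. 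Otherwise negative definiteness says nothing; already on a single edge the determinant is $d_id_j-c_{ij}c_{ji}$. As long as you let $c_{ij}$ absorb an unspecified ``first-order comparison of coordinates'', nothing stops such a factor from spoiling this.

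\textbf{The missing observation.} This is how the paper proceeds: no coordinate comparison enters at all. Take normal-crossing coordinates at $p=E_i\cap E_j$ with $E_i=(x=0)$ and $E_j=(y=0)$. In the fiber of $\mathcal{S}$ at $p$, the Euler fields $x\partial_x$ and $y\partial_y$ form a basis that does not change when $x$ or $y$ is multiplied by a unit. Indeed, if $y'=vy$, then $y\partial_y-y'\partial_{y'}$ and the resulting change in $x\partial_x$ both lie in $\mathfrak{m}_p\mathcal{S}$. Moreover, $x\partial_x$ is the normal Euler direction of $E_i$ and the tangential one of $E_j$, and the roles are reversed for $y\partial_y$. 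So compatibility of the two restrictions is a single identity in the two-dimensional fiber of $\mathcal{S}(-D^-)$ at $p$. The only conversion factor is $\gamma_{ij}=\eta_j(p)/\eta_i(p)$:
\[
\eta_i(p)\bigl(B_i(a_{ij})\,x\partial_x+\tau_iP_i'(a_{ij})\,y\partial_y\bigr)=\gamma_{ij}\,\eta_i(p)\bigl(\tau_jP_j'(a_{ji})\,x\partial_x+B_j(a_{ji})\,y\partial_y\bigr).
\]
Hence, in your notation, $c_{ji}=\gamma_{ij}$ and $c_{ij}=\gamma_{ij}^{-1}$, so $c_{ij}c_{ji}=1$ exactly. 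There is no sign ambiguity, and the depth-parity device is unnecessary. Writing $\lambda_{ij}=-\gamma_{ij}P_j'(a_{ji})/P_i'(a_{ij})$, this says $\lambda_{ij}\lambda_{ji}=1$. The recursive choice $c_j=c_i/\lambda_{ij}$ then turns the system into the negative definite matrix $(E_iE_j)_{\beta_i=\beta_j=0}$. With this inserted, your argument is the paper's proof.
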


\begin{proof}
Fix a global section on \(R_\alpha\). Its tangential part already vanishes on every component with \(\beta_i>0\), by Lemma~\ref{lem:zero-component-sections}. For a component with \(\beta_i=0\), use the coordinates, local generator, and coefficients \(B_i,\tau_i\) from that lemma. Notice that \(\beta_i=0\) implies that every neighbor of \(E_i\) belongs to \(R_\alpha\), by \eqref{eq:beta-boundary}.

Consider an intersection point \(p_{ij}=E_i \cap E_j\) with \(\beta_i=\beta_j=0\). Choose normal-crossing coordinates \((x,y)\) with \(E_i=(x=0)\) and \(E_j=(y=0)\). In the logarithmic tangent fiber at this point, the normal and tangential Euler directions on \(E_i\) are \(x\partial_x\) and \(y\partial_y\), respectively; on \(E_j\) their roles are reversed. Multiplying a branch parameter by a unit does not change these directions in the fiber. Write
\[
 \eta_j(p_{ij})=\gamma_{ij}\eta_i(p_{ij}),\quad
 \gamma_{ij} \in \mathbb{C}^*,\quad
 \gamma_{ji}=\gamma_{ij}^{-1}.
\]
The two restrictions of our section then have values
\begin{align*}
 &\eta_i(p_{ij})(B_i(a_{ij})x\partial_x+\tau_iP_i'(a_{ij})y\partial_y),\\
 &\gamma_{ij}\eta_i(p_{ij})(\tau_jP_j'(a_{ji})x\partial_x+B_j(a_{ji})y\partial_y).
\end{align*}
Equality of these values gives
\begin{equation}\label{eq:matching}
 \begin{aligned}
 B_i(a_{ij})&=\gamma_{ij}P_j'(a_{ji})\tau_j,\\
 P_i'(a_{ij})\tau_i&=\gamma_{ij}B_j(a_{ji}).
 \end{aligned}
\end{equation}
If instead \(\beta_i=0\) and \(\beta_j>0\), the tangential part on \(E_j\) is zero. Comparing its value with the normal direction on \(E_i\) therefore gives \(B_i(a_{ij})=0\).

For an edge with \(\beta_i=\beta_j=0\), put
\[
 \lambda_{ij}=-\gamma_{ij}\frac{P_j'(a_{ji})}{P_i'(a_{ij})}.
\]
The intersection points on each component are distinct, so these scalars are nonzero, and \(\gamma_{ji}=\gamma_{ij}^{-1}\) gives \(\lambda_{ij}\lambda_{ji}=1\). Since \(\deg B_i \le t_i-1\), Lagrange interpolation gives
\[
 [z_i^{t_i-1}]B_i=\sum_{j \sim i}\frac{B_i(a_{ij})}{P_i'(a_{ij})}.
\]
Combining this identity with Lemma~\ref{lem:zero-component-sections}, \eqref{eq:matching}, and the vanishing of \(B_i(a_{ij})\) when \(\beta_j>0\), we obtain
\begin{equation}\label{eq:tau-system}
 -d_i\tau_i+
 \sum_{\substack{j \sim i\\\beta_j=0}}\lambda_{ij}\tau_j=0,\quad
 \lambda_{ij}\lambda_{ji}=1.
\end{equation}

The subgraph on the vertices with \(\beta_i=0\) is a forest. In each connected component, choose a root and nonzero scalars \(c_i\) recursively by
\[
 c_j=c_i/\lambda_{ij}
\]
whenever \(i\) is the parent of \(j\). Substitute \(\tau_i=c_i\sigma_i\) into \eqref{eq:tau-system} and divide the \(i\)-th equation by \(c_i\). The resulting system has coefficient matrix \((E_iE_j)_{\beta_i=\beta_j=0}\), a negative definite principal submatrix of the intersection matrix. Hence every \(\sigma_i\), and therefore every \(\tau_i\), is zero.
\end{proof}

The next lemma gives the canonical map that will be applied to these sections. For a reduced divisor \(R\) supported on \(E\), write \(\Theta_R=\operatorname{Der}_{\mathbb{C}}(\mathcal{O}_R,\mathcal{O}_R)\).

\begin{lemma}
\label{lem:conormal-injection}
Let \(R\) be a reduced divisor supported on \(E\). The action on the conormal bundle of \(R\) induces a canonical injection
\[
 \ker(\mathcal{S}|_R \to \Theta_R) \hookrightarrow \mathcal{O}_R.
\]
\end{lemma}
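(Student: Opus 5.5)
The idea is to build the map locally, check its properties in normal-crossing coordinates, and then glue using canonicity. Write \(\mathcal{I}\subset\mathcal{O}_V\) for the ideal sheaf of \(R\), so the conormal sheaf is \(\mathcal{I}/\mathcal{I}^2\). Since \(R\le E\) is reduced and \(\Supp R\subset E\), every logarithmic vector field \(\theta\in\mathcal{S}\) preserves \(\mathcal{I}\). Hence \(\theta\) induces a derivation of \(\mathcal{O}_R=\mathcal{O}_V/\mathcal{I}\), which gives the map \(\mathcal{S}|_R\to\Theta_R\). It also induces an \(\mathcal{O}_V\)-linear endomorphism of \(\mathcal{I}/\mathcal{I}^2\). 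Linearity holds because \(\theta(gh)=g\theta(h)+h\theta(g)\) and \(h\theta(g)\in\mathcal{I}\) for \(h\in\mathcal{I}\). The endomorphism vanishes when \(\theta\in\mathcal{I}\mathcal{S}\), because then \(\theta(\mathcal{I})\subset\mathcal{I}^2\).

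Suppose \(\theta\) lies in the kernel of \(\mathcal{S}|_R\to\Theta_R\). The plan is to verify that the induced endomorphism of the conormal module is multiplication by an element of \(\mathcal{O}_R\). I expect this to be the only real step. The conormal sheaf \(\mathcal{I}/\mathcal{I}^2\) is a line bundle on \(R\), since \(R\) is a Cartier divisor. So \(\mathcal{E}nd_{\mathcal{O}_R}(\mathcal{I}/\mathcal{I}^2)=\mathcal{O}_R\) canonically, and the map
\[
\theta\mapsto(\text{action on }\mathcal{I}/\mathcal{I}^2)\in\mathcal{O}_R
\]
is defined globally without choosing generators. Gluing is then automatic.

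Injectivity is local, so I will work in local coordinates. Away from the double points, take \(R=(x=0)\) and \(\mathcal{S}=\langle x\partial_x,\partial_y\rangle\). A section \(\theta=a\,x\partial_x+b\,\partial_y\), taken mod \(x\), induces the derivation \(\bar b\,\partial_y\) on \(\mathcal{O}_R\). Lying in the kernel therefore forces \(\bar b=0\). The action on the generator \(x\) is \(\theta(x)=ax\), so the image in \(\mathcal{O}_R\) is \(\bar a\). Thus \(\bar a=0\) forces \(\theta\equiv0\) in \(\mathcal{S}|_R\).

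At a double point \(E_i\cap E_j\), first consider the case where both branches lie in \(R\), so \(R=(xy=0)\). Here \(\mathcal{S}=\langle x\partial_x,y\partial_y\rangle\) and \(\theta=a\,x\partial_x+b\,y\partial_y\). Its derivation on \(\mathcal{O}_R=\mathbb{C}\{x,y\}/(xy)\) is zero if and only if \(\bar a\) vanishes on \((y=0)\) and \(\bar b\) vanishes on \((x=0)\), modulo \(xy\). The action on the generator \(xy\) is \((a+b)xy\), with image \(\overline{a+b}\in\mathcal{O}_R\). I will check that \(\overline{a+b}=0\) together with the kernel conditions forces \(a,b\in(xy)\)-multiples, up to terms that vanish in \(\mathcal{S}|_R\). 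This reduces to a short computation in the ring \(\mathbb{C}\{x,y\}/(xy)\): write \(a=a_0(x)+y(\dots)\) and \(b=b_0(y)+x(\dots)\). The kernel condition gives \(a_0\) divisible by \(x\) and \(b_0\) divisible by \(y\). Then vanishing of \(\overline{a+b}\) on each branch kills the remaining parts modulo the ideal \(\mathcal{I}\mathcal{S}\).

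The other double-point case is a point where \(R=(x=0)\) meets a component of \(E\) not in \(R\), say \((y=0)\). Here \(\mathcal{S}=\langle x\partial_x,y\partial_y\rangle\) again, and the same argument as in the smooth case applies with \(\partial_y\) replaced by \(y\partial_y\). The kernel condition now says \(\bar b\cdot y\partial_y=0\) on \(R\), hence \(\bar b=0\), and \(\bar a=0\) as before.

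The main obstacle is the bookkeeping at nodes of \(R\). There I will carefully confirm that the kernel of \(\mathcal{S}|_R\to\Theta_R\) is the rank-one subsheaf generated by the Euler field \(x\partial_x+y\partial_y\) (up to torsion-free corrections), and that this Euler field maps to the unit \(1\), making the map injective.
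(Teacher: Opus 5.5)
Your overall route is the paper's: a logarithmic field acts on the conormal sheaf, the endomorphism sheaf of an invertible sheaf on \(R\) is \(\mathcal{O}_R\), and injectivity is checked locally in three cases. Your computations at smooth points and at points of \(R\cap(E-R)\) agree with the paper's. Two steps, however, fail as written.

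\textbf{Linearity.} The claim that every \(\theta\in\mathcal{S}\) acts \(\mathcal{O}_V\)-linearly on \(\mathcal{I}/\mathcal{I}^2\) is false. Knowing \(h\theta(g)\in\mathcal{I}\) says nothing modulo \(\mathcal{I}^2\). For \(R=(x=0)\) and \(\theta=\partial_y\) one has \(\theta(x)=0\) but \(\theta(yx)=x\not\equiv0 \pmod{x^2}\). In general the action is only a first-order differential operator. Linearity holds exactly on the kernel: if \(\theta\) induces the zero derivation of \(\mathcal{O}_R\), then \(\theta(g)\in\mathcal{I}\) for all \(g\), hence \(h\theta(g)\in\mathcal{I}^2\). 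This is why the lemma is stated for the kernel, and it is the one-line argument behind the step you yourself flagged as ``the only real step''.

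\textbf{Nodes of \(R\).} With \(\theta=a\,x\partial_x+b\,y\partial_y\), the kernel condition is \(ax,\,by\in(xy)\), i.e.\ \(a\in(y)\) and \(b\in(x)\). In your notation this means \(a_0=0\) and \(b_0=0\), not merely \(x\mid a_0\) and \(y\mid b_0\). The weaker condition does not suffice. Take \(a=x\), \(b=-x\): this satisfies your condition and \(a+b=0\), yet \(x(x\partial_x-y\partial_y)\notin xy\,\mathcal{S}\). With the correct condition, write \(a=ya'\) and \(b=xb'\). Restricting \(a+b\in(xy)\) to \(x=0\) gives \(a'(0,y)=0\), so \(a\in(xy)\); symmetrically \(b\in(xy)\), hence \(\theta\in\mathcal{I}\mathcal{S}\). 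This is the paper's ``\(a=-b\in(x)\cap(y)=0\) in \(\mathbb{C}\{x,y\}/(xy)\)''.

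Your closing plan should be dropped, because it rests on a false statement. The Euler field \(x\partial_x+y\partial_y\) is not in the kernel, since it sends \(x\mapsto x\neq0\) in \(\mathcal{O}_R\). At a node the kernel is generated by \(xy\partial_x\) and \(xy\partial_y\), which map to \(\bar y\) and \(\bar x\). It therefore maps isomorphically onto the maximal ideal \((\bar x,\bar y)\) of the node. So the kernel is not locally free there, its image never contains \(1\), and the map is injective but not surjective at nodes.
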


\begin{proof}
A logarithmic vector field preserves the ideal \(\mathcal{I}_R\), and hence induces a derivation on \(\mathcal{O}_R\) and a differential operator on \(\mathcal{I}_R/\mathcal{I}_R^2\). Both depend only on its restriction as a section of \(\mathcal{S}|_R\). If the induced derivation on \(\mathcal{O}_R\) is zero, the action on \(\mathcal{I}_R/\mathcal{I}_R^2\) is \(\mathcal{O}_R\)-linear. The conormal sheaf is invertible because \(R\) is a Cartier divisor on the smooth surface \(V\). Its endomorphism sheaf is canonically \(\mathcal{O}_R\), so this action defines the asserted morphism. We check its injectivity locally.

At a smooth point of \(R\) that does not lie on \(E-R\), choose coordinates with \(R=(x=0)\) and write
\[
 \theta=ax\partial_x+b\partial_y.
\]
The induced derivation on \(\mathcal{O}_R\) is zero exactly when \(b \in (x)\). The conormal action is multiplication by \(a\bmod(x)\). If this function is zero, both coefficients of \(\theta|_R\) are zero.

At an intersection point of two components of \(R\), take \(R=(xy=0)\) and write
\[
 \theta=ax\partial_x+by\partial_y.
\]
In the ring \(\mathbb{C}\{x,y\}/(xy)\), zero induced derivation means \(a \in (y)\) and \(b \in (x)\). The conormal action is multiplication by
\[
 \frac{\theta(xy)}{xy}=a+b\pmod{(xy)}.
\]
If this element is zero, then \(a=-b\) belongs to \((x) \cap (y)=0\) in this ring. Thus both coefficients vanish, and \(\theta|_R=0\).

Finally, at an intersection point of \(R\) with \(E-R\), choose coordinates with \(R=(x=0)\) and the other component given by \((y=0)\). Write
\[
 \theta=ax\partial_x+by\partial_y.
\]
The induced derivation on \(\mathcal{O}_R\) is zero exactly when \(b \in (x)\), and the conormal action is multiplication by \(a\bmod(x)\). Its vanishing again gives \(\theta|_R=0\).

\end{proof}

\begin{lemma}
\label{lem:zero-injection}
There is a canonical injection
\[
 H^0(\mathcal{R},Q_{\mathcal{S}}) \hookrightarrow H^0(\mathcal{R},Q_{\mathcal{O}}).
\]
\end{lemma}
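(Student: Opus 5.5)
We combine Lemma~\ref{lem:zero-tangential-vanishing} with Lemma~\ref{lem:conormal-injection}, applied to the reduced divisor \(R=\mathcal{R}\) and twisted by the line bundle \(\mathcal{O}_V(-D^-)|_{\mathcal{R}}\). Tensoring the canonical injection of Lemma~\ref{lem:conormal-injection} with the invertible sheaf \(\mathcal{O}_V(-D^-)|_{\mathcal{R}}\) preserves injectivity, since that sheaf is locally free. This gives an injection of sheaves on \(\mathcal{R}\):
\[
 \ker\bigl(Q_{\mathcal{S}} \to \Theta_{\mathcal{R}}\otimes\mathcal{O}_V(-D^-)|_{\mathcal{R}}\bigr)\hookrightarrow Q_{\mathcal{O}}.
\]
Taking global sections is left exact, so we obtain an injection on \(H^0\) of the kernel sheaf. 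It therefore suffices to show that every global section of \(Q_{\mathcal{S}}\) lies in the kernel, that is, that its induced derivation on \(\mathcal{O}_{\mathcal{R}}\) (twisted) vanishes.

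We work one connected component \(R_\alpha\) at a time, since \(H^0\) splits over the components. Fix a global section \(s\) of \(Q_{\mathcal{S}}|_{R_\alpha}\). By Lemma~\ref{lem:zero-tangential-vanishing}, its image in \(H^0(E_i,\mathcal{L}(-D^-)|_{E_i})\) is zero for every \(i\in I_\alpha\). Through the isomorphism \(\mathcal{L}|_{E_i}\simeq\Theta_{E_i}(-\log\Sigma_i)\), this says that the tangential part of \(s|_{E_i}\) is zero on each component. We then check locally that the derivation on \(\mathcal{O}_{R_\alpha}\) induced by \(s\) vanishes, using the local normal forms from the proof of Lemma~\ref{lem:conormal-injection}. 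There are three kinds of points.
\begin{itemize}
\item At a smooth point of \(R_\alpha\), or at a point of \(R_\alpha\cap(E-R_\alpha)\), write \(\theta=ax\partial_x+b\partial_y\) or \(\theta=ax\partial_x+by\partial_y\). The tangential part is \(b|_{x=0}\) (times \(\partial_y\) or \(y\partial_y\)). Its vanishing gives \(b\in(x)\), which is exactly the condition that the derivation on \(\mathcal{O}_R\) vanishes.
\item At a node \(xy=0\) of \(R_\alpha\), write \(\theta=ax\partial_x+by\partial_y\). Vanishing of the tangential part on \(E_i=(x=0)\) gives \(b\in(x)\), and on \(E_j=(y=0)\) it gives \(a\in(y)\). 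Together these again say that the induced derivation is zero.
\end{itemize}
Hence \(s\) lies in the kernel sheaf, and the map \(s\mapsto\) (conormal action of \(s\)) lands in \(H^0(\mathcal{R},Q_{\mathcal{O}})\) and is injective.

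The main point requiring care is the node case. There one must make sure that "the tangential part vanishes on each component separately" really gives the ideal-membership conditions in the local ring of \(\mathcal{R}\), and not merely pointwise vanishing. I would handle this by noting two facts. First, restriction of \(\mathcal{S}\) to \(\mathcal{R}\) followed by restriction to each \(E_i\) is compatible with the sequence \eqref{eq:component-split}. Second, a section of \(\mathcal{O}_{\{xy=0\}}\) vanishing on both branches is zero. Canonicity holds because every map used (restriction, conormal action, twist) is canonical.
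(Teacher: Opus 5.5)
Your proposal is correct and follows the paper's proof. Lemma~\ref{lem:zero-tangential-vanishing} shows that the induced twisted derivation on \(\mathcal{O}_{\mathcal{R}}\) vanishes, and tensoring the injection of Lemma~\ref{lem:conormal-injection} with \(Q_{\mathcal{O}}=\mathcal{O}_V(-D^-)|_{\mathcal{R}}\) and taking global sections gives the result; your local check at smooth points, boundary points and nodes just spells out what the paper compresses into ``since \(\mathcal{R}\) is reduced''.
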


\begin{proof}
By Lemma~\ref{lem:zero-tangential-vanishing}, every section of \(Q_{\mathcal{S}}\) induces the zero \(Q_{\mathcal{O}}\)-valued derivation on \(\mathcal{O}_{\mathcal{R}}\), since \(\mathcal{R}\) is reduced. Tensoring the injection of Lemma~\ref{lem:conormal-injection} with \(Q_{\mathcal{O}}\) and taking global sections gives the result.

\end{proof}

\begin{proof}[Proof of Proposition~\ref{prop:zero-comparison}]
If \(\mathcal{R}=0\), then \(D^+=D^-\), and the assertion is immediate. We therefore assume that \(\mathcal{R} \ne 0\). Each connected component of \(\mathcal{R}\) has a boundary vertex, so \(H^0(Q_{\mathcal{L}})=0\): a section vanishes on the negative-degree boundary component and then, by matching, on the whole tree. Riemann--Roch on the reduced divisor \(\mathcal{R}\), whose connected components are trees of rational curves, and
\[
 \det Q_{\mathcal{S}} \simeq Q_{\mathcal{L}}\otimes Q_{\mathcal{O}}
\]
give
\[
 \Phi(D^+)-\Phi(D^-)=\dim I_{\mathcal{O}}-\dim I_{\mathcal{S}}.
\]
Lemma~\ref{lem:zero-vanishing} and the restriction exact sequence show that the map
\[
 H^0(\mathcal{O}_V(-D^-)) \to H^0(Q_{\mathcal{O}})
\]
is surjective. Lemma~\ref{lem:zero-injection} therefore gives
\[
 \dim I_{\mathcal{S}} \le h^0(Q_{\mathcal{S}})
 \le h^0(Q_{\mathcal{O}})=\dim I_{\mathcal{O}}.
\]
\end{proof}

\begin{proof}[Proof of Proposition~\ref{prop:cohomological-comparison}]
By Lemma~\ref{lem:nonzero}, the increment of \(\Phi\) is nonnegative at every step with nonzero residue. Proposition~\ref{prop:zero-comparison} shows that the sum of the increments at the zero-residue steps is also nonnegative. Summing the formula in Lemma~\ref{lem:increment} and using \eqref{eq:initial-Phi} and Lemma~\ref{lem:terminal} yields
$$
 h^1(V,\mathcal{L}(-E'))-h^1(V,\mathcal{S}(-E'))
 =\sum_{j=0}^{N-1}
 (c_{\mathcal{S}}(D_j,G_j)-c_{\mathcal{O}}(D_j,G_j)) \ge 0.
$$
\end{proof}

\subsection{Wahl's Rational Conjecture}
\label{subsec:rational-conjecture}

We now allow arbitrary minimal good resolution graphs of rational surface singularities.

\begin{proof}[Proof of Theorem~\ref{prop:rational-conjecture}]
If the resolution graph is linear, the assertion follows from Wahl~\cite[Proposition~4.9]{Wahl2015}. If it is star-shaped with one node, the assertion follows from~\cite[Proposition~4.8]{Wahl2015}.

Suppose that the graph has at least two nodes. Then \((X,0)\) is not quasi-homogeneous, since the minimal good resolution graph of a quasi-homogeneous normal surface singularity has at most one node~\cite[Theorem~(2.3.1)]{OrlikWagreich1971}. Equation~\eqref{eq:Wahl-restriction}, Proposition~\ref{prop:cohomological-comparison}, and \eqref{eq:twisted-L-comparison} give
\begin{equation*}
\begin{split}
h^1(V,\mathcal{S}) &=r-4+h^1(V,\mathcal{S}(-E'))\\
 & \le r-4+h^1(V,\mathcal{L}(-E')) \\
 &=h^1(V,\mathcal{L})-1\\
 &< h^1(V,\mathcal{L}).\qedhere
\end{split}
\end{equation*}
\end{proof}

\section{A cohomological bound from the resolution graph}
\label{sec:lattice-estimate}

We start this section by recalling the construction of the graph
classes \(\mathcal{A}\), \(\mathcal{B}\), and \(\mathcal{C}\), and
then prove the cohomological estimates needed for the smoothing
obstruction. Most of these arguments will be of combinatorial nature.

\subsection{Graph constructions}
\label{subsec:graph-constructions}

The classes \(\mathcal{A}\), \(\mathcal{B}\), and \(\mathcal{C}\) are defined by the following construction; see Stipsicz--Szab\'o--Wahl~\cite{StipsiczSzaboWahl2008}. For each class, start with the corresponding weighted tree below, denoted by \(\Gamma_0\) and called the \emph{initial graph}:
\begin{equation*}\label{eq:initial-graph}
\begin{aligned}
\begin{tikzpicture}[scale=0.75]
\node[bullet] (00) at (0,0) [label=below:\(-3\)] {};
\node[bullet] (10) at (1,0) [label=below:\(-1\)] {};
\node[bullet] (11) at (1,1) [label=above:\(-3\)] {};
\node[bullet] (20) at (2,0) [label=below:\(-3\)] {};
\node (01) at (-0.5,1) {\(\mathcal{A}\):};
\draw (00) -- (10);
\draw (10) -- (11);
\draw (10) -- (20);
\end{tikzpicture}
\quad
\begin{tikzpicture}[scale=0.75]
\node[bullet] (00) at (0,0) [label=below:\(-4\)] {};
\node[bullet] (10) at (1,0) [label=below:\(-1\)] {};
\node[bullet] (11) at (1,1) [label=above:\(-4\)] {};
\node[bullet] (20) at (2,0) [label=below:\(-2\)] {};
\node (01) at (-0.5,1) {\(\mathcal{B}\):};
\draw (00) -- (10);
\draw (10) -- (11);
\draw (10) -- (20);
\end{tikzpicture}
\quad
\begin{tikzpicture}[scale=0.75]
\node[bullet] (00) at (0,0) [label=below:\(-6\)] {};
\node[bullet] (10) at (1,0) [label=below:\(-1\)] {};
\node[bullet] (11) at (1,1) [label=above:\(-3\)] {};
\node[bullet] (20) at (2,0) [label=below:\(-2\)] {};
\node (01) at (-0.5,1) {\(\mathcal{C}\):};
\draw (00) -- (10);
\draw (10) -- (11);
\draw (10) -- (20);
\end{tikzpicture}
\end{aligned}
\end{equation*}

Apply a finite, possibly empty, sequence of the blow-up operations (B-1) and (B-2), followed by the modification (M).

\begin{enumerate}
\item[(B-1)] Blow up the unique vertex of weight \(-1\): decrease its weight to \(-2\) and attach a new vertex of weight \(-1\) to it by a single edge.

\item[(B-2)] Blow up an edge incident to the unique vertex of weight \(-1\): insert a new vertex of weight \(-1\) into the edge and decrease the weights of both original endpoints by one.

\item[(M)] Replace the weight of the unique vertex of weight \(-1\) by \(-4\), \(-3\), or \(-2\), according as the construction started from the graph corresponding to \(\mathcal{A}\), \(\mathcal{B}\), or \(\mathcal{C}\).
\end{enumerate}

For a graph \(\Gamma\) in one of these classes, fix such a construction.
Let \(m\) be the number of operations of type (B-1) in the chosen construction and let \(r\) be the number of leaves of the resulting graph \(\Gamma\).

For the cohomological estimates below, let \(f \colon (V,E) \to (X,0)\) be the minimal good resolution of a rational surface singularity with graph \(\Gamma\), and assume that \(\Gamma\) has at least two nodes. Write \(E=\sum_{i=0}^{n}E_i\) and \(E_i^2=-d_i\). We use \(\mathcal{S}\), \(\mathcal{L}\), and \(E'\) as in Section~\ref{sec:computation-sequences}.


For a weighted tree \(T\) with vertex weights \(-d_v\), and
$D\in \Lambda_T$ put 
\[
\Lambda_T=\bigoplus_v\mathbb{Z}E_v,\qquad
k_T(E_v)=d_v+2\val(E_v)-6,\qquad
\mathfrak{q}_T(D)=\frac{D^2+k_T(D)}2.
\]
Here \(k_T\) is the characteristic linear form determined by its values on the vertex basis.
If \(T\) is the resolution graph of \((V,E)\), then adjunction gives \(k_T=K_V-2c_1(\mathcal{L})\), hence \(\mathfrak{q}_T=q_{\mathcal{L}}\); cf. Némethi~\cite[\S6.3]{Nemethi2022}.

\begin{proposition}\label{prop:lattice-estimate}
Let \((X,0)\) be a rational surface singularity whose minimal good resolution graph \(\Gamma\) has at least two nodes and is of type \(\mathcal{A}\), \(\mathcal{B}\), or \(\mathcal{C}\). If the chosen construction of \(\Gamma\) contains \(m\) operations of type (B-1), then
\[
 h^1(V,\mathcal{L}) \le m.
\]
\end{proposition}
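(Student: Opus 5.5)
The plan is to turn the cohomological statement into a purely lattice-theoretic bound and then prove that bound by induction along the chosen construction of \(\Gamma\). By Section~\ref{sec:computation-sequences}, \([\mathcal{L}]\ge E'\) is effective, so \eqref{eq:cohomology-formula} gives \(h^1(V,\mathcal{L})=q_{\mathcal{L}}([\mathcal{L}])=\mathfrak{q}_\Gamma([\mathcal{L}])\). It therefore suffices to show
\[
\mathfrak{q}_\Gamma(D)\le m\quad\text{for every effective }D\in\Lambda_\Gamma .
\]
This is a statement about the weighted graph only. As a sanity check that maximizing over \(D\) is the right formulation, note that along the computation sequence \eqref{eq:sequence} one has \(\mathfrak{q}(D+G)-\mathfrak{q}(D)=DG-c_1(\mathcal{L})G-1\ge 0\). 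Hence \(\mathfrak{q}([\mathcal{L}])\) is the maximum of \(\mathfrak{q}\) along that sequence, and bounding \(\mathfrak{q}\) on all effective cycles is the natural formulation.

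To compare graphs along the construction, I would record how \(\mathfrak{q}_T\) transforms. Let \(T\) be obtained from \(T'\) by one operation (B-1) or (B-2), with new \((-1)\)-vertex \(e\). Given an effective \(D\) on \(T\), let \(D'\) be the cycle on \(T'\) obtained by forgetting \(e\). I would write \(D=\pi^*D'+\ell e\) with \(\ell\in\mathbb{Z}\), where \(\pi^*\) is the pull-back to the blown-up lattice (combinatorially, the new coefficient on \(e\) is the sum of the coefficients of the blown-up vertex or edge). Since \(e^2=-1\), \(\pi^*D'\cdot e=0\), and \((\pi^*D')^2=D'^2\), expanding gives
\[
\mathfrak{q}_T(D)=\mathfrak{q}_{T'}(D')+\tfrac12\bigl(k_T(\pi^*D')-k_{T'}(D')\bigr)+\tfrac{-\ell^2+\ell\,k_T(e)}{2},
\]
where \(k_T(e)=1+2\val(e)-6\). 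Here \(\val(e)=1\) for (B-1), so \(k_T(e)=-3\), and \(\val(e)=2\) for (B-2), so \(k_T(e)=-1\). The linear correction \(k_T(\pi^*D')-k_{T'}(D')\) comes from the changes of weights and valencies at the endpoints. For (B-2) the valencies of the endpoints are unchanged and their weights drop by one, which should make the correction at most the coefficient on \(e\). For (B-1) the valency of the blown-up vertex rises by one, contributing an extra \(+2\) per unit of its coefficient, and I expect this to produce a gain of at most \(1\) after optimizing in \(\ell\). Finally, the modification (M) changes only the weight of the \((-1)\)-vertex \(w\), to \(-4,-3,-2\) respectively. Because \(w^2\) becomes more negative and \(k\) changes by the same amount on \(w\), the effect on \(\mathfrak{q}\) is \(\tfrac12(\delta D_w-\delta D_w^2)\le 0\) for integer \(D_w\), where \(\delta\) is the weight drop.

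The main obstacle is that the intermediate graphs (and already \(\Gamma_0\), where \(1-3\cdot\tfrac13=0\)) are not negative definite. This means \(\mathfrak{q}_{T'}\) need not be bounded above, so a naive induction on \(\max\mathfrak{q}_{T'}\) fails. I would try to fix this by carrying along the final modification: define for each intermediate graph \(T\) the modified form \(\widetilde{\mathfrak{q}}_T\) obtained by applying (M) to the current \((-1)\)-vertex. Negative definiteness of \(\Gamma\), and of the analogous modified graphs, should make \(\widetilde{\mathfrak{q}}_T\) bounded. I would then prove the following by induction: \(\max\widetilde{\mathfrak{q}}_{\Gamma_0}\le 0\), checked by hand on the three star-shaped initial graphs using the arm structure; each (B-2) step does not increase the maximum; and each (B-1) step increases it by at most \(1\). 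The delicate part is the (B-1) step, where I would compare via the explicit identity above and bound the quadratic in \(\ell\), reducing to an inequality on the coefficient of the blown-up vertex. This yields \(\mathfrak{q}_\Gamma(D)\le m\) and hence \(h^1(V,\mathcal{L})\le m\).
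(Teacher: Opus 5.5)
The gap is in the step that carries the proof: your induction on the modified forms \(\widetilde{\mathfrak{q}}_T\) is never carried out, and the route you indicate for it fails. Compare \(D=\pi^*D'+\ell e\) with the forgetful cycle \(D'\) in a (B-1) step. Let \(a=D'_v\) be the coefficient of the blown-up vertex and \(\delta\in\{3,2,1\}\) the weight drop in (M). Then the modification introduces a cross term:
\[
\widetilde{\mathfrak{q}}_T(D)-\widetilde{\mathfrak{q}}_{T'}(D')=-\frac{\ell(\ell+3)}{2}-\frac{\delta}{2}\,\ell\,(2a+\ell-1).
\]
For \(\ell=-1\) this equals \(1+\delta(a-1)\), which is unbounded in \(a\). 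In (B-2), with \(D'_w=0\) and \(\ell=-1\), the analogous difference is \(\delta(a-1)\). No inequality on \(a\) holds for maximizers, and you propose no other comparison cycle. So the claimed behaviour of \(\max\widetilde{\mathfrak{q}}_T\) under (B-1) and (B-2) is unproven.

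The obstacle you are working around does not exist. \(\Gamma_0\) is only negative semidefinite, but \(k_{\Gamma_0}\) vanishes on its radical. For example, on \(3E_0+E_1+E_2+E_3\) in type \(\mathcal{A}\) it takes the value \(3\cdot 1+3\cdot(-1)=0\); the same holds for \(4E_0+E_1+E_2+2E_3\) and \(6E_0+E_1+2E_2+3E_3\) in types \(\mathcal{B}\) and \(\mathcal{C}\). Hence there is a rational \(\xi_0\) with \(\xi_0E_v=k_{\Gamma_0}(E_v)\). Completing the square gives \(\mathfrak{q}_{\Gamma_0}(D)=\frac12(D+\xi_0/2)^2-\xi_0^2/8\le 3/8\), so \(\mathfrak{q}_{\Gamma_0}\le 0\) on all integral cycles by integrality.

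Moreover, the linear correction you left open in your blow-up identity is exactly zero. In (B-1) the blown-up vertex gains \(3\) in \(k\); in (B-2) each edge endpoint gains \(1\). These gains are cancelled by \(k_T(e)=-3\), respectively \(-1\). Thus \(k_T\circ\pi^*=k_{T'}\), and \(\mathfrak{q}_T(\pi^*D'+\ell e)=\mathfrak{q}_{T'}(D')-\ell(\ell+3)/2\), respectively \(-\ell(\ell+1)/2\), whose integer maxima are \(1\) and \(0\). This propagates \(\mathfrak{q}_T\le m_T\) through the degenerate intermediate graphs with no definiteness needed. Finally, (M) only lowers \(\mathfrak{q}\), since \(a(a-1)\ge 0\). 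This is the paper's proof. It also gives your modified bound at once via \(\widetilde{\mathfrak{q}}_T\le\mathfrak{q}_T\le m_T\), so the detour is unnecessary.
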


\begin{proof}
Let \(\Gamma_0\) be the initial graph of the chosen construction. Denote its central vertex by \(E_0\) and its three leaves by \(E_1,E_2,E_3\). We use the same symbols for the corresponding basis elements of \(\Lambda_{\Gamma_0}\). Their intersection numbers are
\[
 E_0^2=-1,\quad
 E_i^2=-d_i,\quad
 E_0E_i=1,\quad
 E_iE_j=0\quad (1 \le i \ne j \le 3),
\]
where the leaves are ordered so that
\[
 (d_1,d_2,d_3)=
 \begin{cases}
 (3,3,3),&\text{for type }\mathcal{A},\\
 (4,4,2),&\text{for type }\mathcal{B},\\
 (6,3,2),&\text{for type }\mathcal{C}.
 \end{cases}
\]

The intersection form of \(\Gamma_0\) is negative semidefinite,
since \(\sum_{i=1}^3d_i^{-1}=1\).
Write \(\xi_0=x_0E_0+\sum_{i=1}^3x_iE_i\). The following choices satisfy \(\xi_0E_v=k_{\Gamma_0}(E_v)\) on the vertex basis:
\[
\begin{array}{c|c|c|c}
 \text{Type}
 &(x_0;x_1,x_2,x_3)
 &(\xi_0E_0,\xi_0E_1,\xi_0E_2,\xi_0E_3)
 &\xi_0^2\\ \hline
 \mathcal{A}
 &(-1;0,0,0)
 &(1,-1,-1,-1)
 &-1\\
 \mathcal{B}
 &(0;0,0,1)
 &(1,0,0,-2)
 &-2\\
 \mathcal{C}
 &(0;-\frac{1}{3},\frac{1}{3},1)
 &(1,2,-1,-2)
 &-3
\end{array}
\]
Completing the square gives
\[
 \mathfrak{q}_{\Gamma_0}(D)=\frac{D^2+\xi_0D}{2}=\frac{1}{2}\left(D+\frac{\xi_0}{2}\right)^2-\frac{\xi_0^2}{8}.
\]
The first term on the right is nonpositive because the intersection form is negative semidefinite. The values of \(\xi_0^2\) in the table therefore imply
\[
 \mathfrak{q}_{\Gamma_0}(D)
 \le -\frac{\xi_0^2}{8}
 \le \frac{3}{8}
 <1.
\]

Since \(k_{\Gamma_0}\) is characteristic, \(\mathfrak{q}_{\Gamma_0}(D)\) is integral for \(D\in\Lambda_{\Gamma_0}\). Thus the bound \(\mathfrak{q}_{\Gamma_0}(D)<1\) gives \(\mathfrak{q}_{\Gamma_0}(D)\le0\).

We next examine how \(\mathfrak{q}\) changes under the blow-up operations. For either operation, let \(T\) be the graph before the blow-up and \(T_+\) the graph after it, and let \(\pi\colon T_+ \to T\) denote the corresponding blow-down. Write \(R\) for the basis element associated with the new exceptional vertex, and let \(C_\pi\) be the set of vertices corresponding to components through the blow-up center. Define the total-transform homomorphism \(\pi^*\colon\Lambda_T \to \Lambda_{T_+}\) by
\[
 \pi^*E_v=
 \begin{cases}
 E_v+R,&v \in C_\pi,\\
 E_v,&\text{otherwise}.
 \end{cases}
\]
Every integral cycle $D_+$ on \(T_+\) has a unique expression
\[
 D_+=\pi^*D+uR,\quad
 D \in \Lambda_T,\quad u \in \mathbb{Z}.
\]
For an operation of type (B-1),
\[
 \mathfrak{q}_{T_+}(D_+)-\mathfrak{q}_T(D)
 =-\frac{u(u+3)}{2}
 \le 1.
\]
For an operation of type (B-2),
\[
 \mathfrak{q}_{T_+}(D_+)-\mathfrak{q}_T(D)
 =-\frac{u(u+1)}{2}
 \le 0.
\]

Let \(T\) now denote the graph obtained after all the blow-up operations and immediately before the modification (M). Starting from the bound \(\mathfrak{q}_{\Gamma_0} \le 0\) and applying the preceding inequalities successively, we obtain
\[
 \mathfrak{q}_T(D) \le m
 \quad\text{for every }D \in \Lambda_T.
\]

The modification (M) preserves the vertices and edges and changes only the weight of the unique vertex of weight \(-1\). We therefore identify \(\Lambda_T\) and \(\Lambda_\Gamma\) as free abelian groups using their common vertex basis. Let \(a\) be the coefficient of this vertex in an integral cycle \(D\). Its weight changes from \(-1\) to \(-k\), where \(k=4\), \(3\), or \(2\) for types \(\mathcal{A}\), \(\mathcal{B}\), or \(\mathcal{C}\), respectively.  Hence
\[
 \mathfrak{q}_\Gamma(D) = \mathfrak{q}_T(D)-\frac{k-1}{2}a(a-1) \le \mathfrak{q}_T(D) \le m,
\]
where the first inequality follows from \(a(a-1) \ge 0\) for every integer \(a\).
Since \([\mathcal{L}]\ge E'\ge0\), \eqref{eq:cohomology-formula} gives
\[
 h^1(V,\mathcal{L})
 =\mathfrak{q}_\Gamma([\mathcal{L}])
 \le m. \qedhere
\]
\end{proof}


The preceding estimate and Proposition~\ref{prop:cohomological-comparison} give the following bound ruling out the existence of $\mathbb Q$HD smoothings for graphs with multiple nodes.

\begin{theorem}\label{thm:main-technical}
Let \((X,0)\) be a rational surface singularity whose minimal good resolution graph \(\Gamma\) is of type \(\mathcal{A}\), \(\mathcal{B}\), or \(\mathcal{C}\) and has at least two nodes. Then
\begin{equation*}
 h^1(V,\mathcal{S})+\sum_{i=0}^{n}(d_i-3) \le 0.
\end{equation*}
\end{theorem}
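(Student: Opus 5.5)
The plan is to assemble the three ingredients already in place, following the outline in the introduction. First, since \(\Gamma\) has at least two nodes, it is neither a chain nor star-shaped. Hence \eqref{eq:Wahl-restriction}, Proposition~\ref{prop:cohomological-comparison} and \eqref{eq:twisted-L-comparison} all apply. Exactly as in the proof of Theorem~\ref{prop:rational-conjecture}, they give
\[
 h^1(V,\mathcal{S})=r-4+h^1(V,\mathcal{S}(-E'))\le r-4+h^1(V,\mathcal{L}(-E'))=h^1(V,\mathcal{L})-1 .
\]
Second, Proposition~\ref{prop:lattice-estimate} gives \(h^1(V,\mathcal{L})\le m\) for the chosen construction of \(\Gamma\). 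Combining the two, \(h^1(V,\mathcal{S})\le m-1\).

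It remains to verify the combinatorial identity \(\sum_{i=0}^n(d_i-3)=1-m\), which I would prove by tracking \(\sigma(T):=\sum_v(d_v-3)\) along the construction. For the initial graph \(\Gamma_0\), the weights give \(\sigma=(1-3)+0+0+0=-2\) for \(\mathcal{A}\). For \(\mathcal{B}\) one gets \((1-3)+1+1-1=-1\), and for \(\mathcal{C}\) one gets \((1-3)+3+0-1=0\). Next I track how each operation changes \(\sigma\). Operation (B-1) raises one weight by one and adds a new vertex with \(d=1\), so \(\sigma\) changes by \(1+(1-3)=-1\). Operation (B-2) raises two weights by one and adds a vertex with \(d=1\), so \(\sigma\) changes by \(2-2=0\). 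Finally (M) replaces \(d=1\) by \(d=4,3,2\) respectively, changing \(\sigma\) by \(+3,+2,+1\). In every case the initial value plus the (M) correction equals \(1\), and each (B-1) contributes \(-1\), so \(\sigma(\Gamma)=1-m\).

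Adding the two estimates then gives \(h^1(V,\mathcal{S})+\sum_i(d_i-3)\le (m-1)+(1-m)=0\). The heavy lifting is contained in the earlier propositions, so the only real step here is the bookkeeping identity above. The main point to check carefully there is that (B-1) modifies only the \(-1\) vertex while (B-2) modifies both endpoints, and that the final graph \(\Gamma\) is the minimal good resolution graph so that the \(d_i\) are read off from it. This holds because after (M) no \(-1\) vertex remains.
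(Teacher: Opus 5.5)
Your proposal is correct and follows the paper's proof essentially step for step. You obtain \(h^1(V,\mathcal{S})\le h^1(V,\mathcal{L})-1\) from the same chain of identities, and you derive \(\sum_v(d_v-3)=1-m\) by the same bookkeeping along the construction: initial values \(-2,-1,0\), increments \(-1\) for (B-1), \(0\) for (B-2), and \(+3,+2,+1\) for (M). You then conclude with Proposition~\ref{prop:lattice-estimate}, exactly as the paper does.
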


\begin{proof}
	Equation~\eqref{eq:Wahl-restriction},
	Proposition~\ref{prop:cohomological-comparison}, and
	\eqref{eq:twisted-L-comparison} give
	\[
		h^1(V,\mathcal{S})
		=r-4+h^1(V,\mathcal{S}(-E'))
		\le r-4+h^1(V,\mathcal{L}(-E'))
		=h^1(V,\mathcal{L})-1.
	\]
	The initial values of \(\sum_v(d_v-3)\) are \(-2\), \(-1\), and
	\(0\) in types \(\mathcal{A}\), \(\mathcal{B}\), and
	\(\mathcal{C}\), respectively. Each operation of type (B-1)
	decreases this sum by one, while each operation of type (B-2)
	preserves it. The modification (M) increases it by \(3\), \(2\),
	or \(1\), respectively. Thus
	\begin{equation}\label{eq:sum<=1-m}
		\sum_{v\in\Gamma}(d_v-3)=1-m.
	\end{equation}
	Combining this identity with Proposition~\ref{prop:lattice-estimate},
	we obtain
	\[
		h^1(V,\mathcal{S})+\sum_{v\in\Gamma}(d_v-3)
		\le h^1(V,\mathcal{L})-1+(1-m)
		=h^1(V,\mathcal{L})-m\le0.\qedhere
	\]
\end{proof}

\section{Rational homology disk smoothings}
\label{sec:qhd-smoothings}

\begin{proof}[Proof of Theorem~\ref{thm:main}]
Suppose that a normal surface singularity \((X,0)\) admits a \QHDS and
its minimal good resolution graph \(\Gamma\) has at least two
nodes. Then \((X,0)\) is rational, and the result of
Stipsicz--Szab\'o--Wahl recalled above implies that \(\Gamma\) is of
type \(\mathcal{A}\), \(\mathcal{B}\), or \(\mathcal{C}\).
By \eqref{eq:dimension} and Theorem~\ref{thm:main-technical},
the smoothing component would have nonpositive dimension,
contrary to the positive dimension of a smoothing component. 	
\end{proof}

\begin{proof}[Proof of Corollary~\ref{corollary:ClassificationOfGraphs}]
Let \((X,0)\) be a normal surface singularity admitting a \QHDS.
By Theorem~\ref{thm:main}, its minimal good resolution graph
has at most one node. If it has no nodes, it is the linear resolution graph of a cyclic quotient singularity. The assertion in this case follows from Looijenga--Wahl~\cite[(5.10)]{LooijengaWahl1986} and Wahl~\cite[(5.9.1)]{Wahl1981}. If it has one node, the graph is star-shaped, and the assertion follows from Bhupal--Stipsicz~\cite[Theorems~1.4 and 1.6]{BhupalStipsicz2011}.
\end{proof}

\begin{proof}[Proof of Corollary~\ref{cor:PreciseMain}]
  Corollary~\ref{corollary:ClassificationOfGraphs} gives the possible
  resolution graphs. The cyclic quotient graphs and the graphs of
  Figure~\ref{figure:fig1} are taut: each determines a unique analytic
  type, which is weighted
  homogeneous~\cite{Laufer1973,Pinkham1977}. The existence results
of \cite{BhupalStipsicz2011}
  for these graphs therefore give precisely the first two classes in the statement.

For a graph of Figure~\ref{figure:fig2}, the analytic type is determined by the four intersection points on the central curve together with their corresponding arms, and every such analytic type is weighted homogeneous; see Wahl~\cite[Introduction and proof of Corollary~8.2]{Wahl2011QHD}. Fowler's classification of model surfaces~\cite[Theorem~3.3.1 and Section~3.5]{Fowler2013}, summarized in Wahl~\cite[Corollary~1.5]{Wahl2021QHD}, determines exactly the cross ratios for which a \QHDS exists. The central curves in Fowler's configurations at infinity and in the resolution are naturally isomorphic, and their intersection points correspond to dual arms~\cite[Section~2.1.1]{Fowler2013}. Under the normalization \eqref{eq:cross-ratio}, Fowler's values are exactly those in the third item.

Only permutations induced by automorphisms of the weighted graph identify analytic types. For \((a,b,c;d)=(3,3,3;4)\), the three arms of weight \(-3\) may be permuted, so the two displayed values give the same analytic type. Thus each graph of Figure~\ref{figure:fig2} gives exactly one analytic type admitting a \QHDS.
\end{proof}

\begin{proof}[Proof of Corollary~\ref{cor:qhd-properties}]
By Corollary~\ref{cor:PreciseMain}, every normal surface singularity admitting a \QHDS is weighted homogeneous. Wahl~\cite[Corollary~8.2]{Wahl2011QHD} proves that every \QHDS component of such a singularity has dimension one. Moreover, every such smoothing is induced from a \(\mathbb{Q}\)-Gorenstein smoothing over the normalization of that component~\cite[Theorem~3.4]{Wahl-2011-log}. Pulling back the equivariant smoothing of the index-one cover preserves its quotient construction, so the induced smoothing is also \(\mathbb{Q}\)-Gorenstein.
\end{proof}

\end{document}